\numberwithin{equation}{section}
\numberwithin{figure}{section}
\theoremstyle{plain}
\newtheorem{thm}{\protect\theoremname}
\newtheorem{conj}[thm]{\protect\conjecturename}
\newtheorem{cor}[thm]{\protect\corollaryname}
\theoremstyle{plain}
\newtheorem{lem}[thm]{\protect\lemmaname}
\newtheorem{prop}[thm]{\protect\propositionname}
\theoremstyle{definition}
\newtheorem{claim}{\protect\claimname}
\newtheorem*{unnumbered_claim}{\protect\claimname}
\providecommand{\definitionname}{Definition}
\providecommand{\U}[1]{\protect\rule{.1in}{.1in}}
\providecommand{\claimname}{Claim}
\providecommand{\lemmaname}{Lemma}
\providecommand{\theoremname}{Theorem}
\providecommand{\conjecturename}{Conjecture}
\providecommand{\propositionname}{Proposition}
\providecommand{\corollaryname}{Corollary}
\providecommand{\N}{\mathbb{N}}
\providecommand{\E}{\mathbb{E}}
\newcommand{\tdeg}[2]{\|#1, #2\|}
\newcommand{\todeg}[2]{\|#1, #2\|^+}
\newcommand{\hdeg}[2]{\|#1, #2\|^h}
\newcommand{\ldeg}[2]{\|#1, #2\|^l}
\newcommand{\vdeg}[2]{\nu(#1, #2)}
\newcommand{\vodeg}[2]{\nu^+(#1, #2)}
\newcommand{\videg}[2]{\nu^-(#1, #2)}
\newcommand{\sdeg}[2]{\sigma(#1, #2)}
\newcommand{\sodeg}[2]{\sigma^+(#1, #2)}
\newcommand{\sideg}[2]{\sigma^-(#1, #2)}
\newcommand{\s}[1]{\mathcal{#1}}
\DeclareMathOperator{\im}{im}
\tikzset{
cc/.style={
rectangle,
draw=black,
minimum width=24,
minimum height=120
},
scc/.style={
cc,
minimum height=96
},
lcc/.style={
cc,
minimum height=144
},
vedge/.style={
->,
>=stealth',
very thick,
},
ccedge/.style={
->,
>=stealth',
thick,
out=270,
in=270,
looseness=2
},
bedge/.style={
thick,
decoration={
brace,
raise=.5
},
decorate
},
vertex/.style={
circle,
inner sep=1,
minimum size=8,
draw=black,
fill=black
}
}
\newcommand{\DrawNearlyEquitable}[4]{
	\def\a{#1}
	\def\b{#2}
	\def\ap{#3}
	\def\bp{#4}
        \pgfmathtruncatemacro\bm{\b - 1};
        \pgfmathtruncatemacro\bps{\b - \bp + 1};
        \pgfmathtruncatemacro\aps{\a - \ap + 1};

	\edef\defaultpgflinewidth{\the\pgflinewidth}
	
	\node (A1) [scc] {};
        \foreach \i in {2,...,\a} {
            \pgfmathtruncatemacro\j{\i - 1};
	    \node (A\i) [cc, below right=0 and -\defaultpgflinewidth of A\j.north east] {};
	}
	\node (B1) [cc, below right=0 and 0.2 of A\a.north east] {};
        \foreach \i in {2,...,\bm} {
            \pgfmathtruncatemacro\j{\i - 1};
	    \node (B\i) [cc, below right=0 and -\defaultpgflinewidth of B\j.north east] {};
	}
	\node (B\b) [lcc, below right=0 and -\defaultpgflinewidth of B\bm.north east] {};

	\draw [bedge] ([yshift=15pt]A\aps.north west) to node {$\mathcal{A}'$} ([yshift=15pt]A\a.north east);
	\draw [bedge] ([yshift=30pt]A1.north west) to node {$\mathcal{A}$} ([yshift=30pt]A\a.north east);
	\draw [bedge] ([yshift=15pt]B\bps.north west) to node {$\mathcal{B}'$} ([yshift=15pt]B\b.north east);
	\draw [bedge] ([yshift=30pt]B1.north west) to node {$\mathcal{B}$} ([yshift=30pt]B\b.north east);

	\node [above=1pt of A1.north] {$V^-$};
	\node [above=1pt of B\b.north] {$V^+$};
}
\begin{document}
\title{An extension of the Hajnal-Szemer\'edi theorem to directed graphs}

\author{Andrzej Czygrinow$^{*}$}\thanks{$^{*}$School of Mathematical Sciences and Statistics, Arizona
State University, Tempe, AZ 85287, USA. E-mail address: aczygri@asu.edu.}

\author{Louis DeBiasio$^{\dagger}$}\thanks{$^{\dagger}$Miami University, Oxford, OH 45056 USA. E-mail address: debiasld@miamioh.edu.}

\author{
H. A. Kierstead$^{\ddagger}$}\thanks{$^{\ddagger}$
School of Mathematical Sciences and Statistics, Arizona
State University, Tempe, AZ 85287, USA. E-mail address: kierstead@asu.edu.
Research of this author is supported in part by NSA grant H98230-12-1-0212.}

\author{Theodore Molla$^{\S}$}\thanks{$^{\S}$School of Mathematical Sciences and Statistics, Arizona
State University, Tempe, AZ 85287, USA. E-mail address: tmolla@asu.edu.
Research of this author is supported in part by NSA grant H98230-12-1-0212.}

 \begin{abstract}
 Hajnal and Szemer\'edi proved that every graph $G$ with $|G|=ks$ and $\delta(G)\ge\ k(s-1)$ contains $k$ disjoint $s$-cliques; moreover this degree bound is optimal.  We extend their theorem to directed graphs by showing  that every directed graph $\vv G$ with $|\vv G|=ks$ and $\delta(\vv G)\ge2k(s-1)-1$ contains $k$ disjoint transitive tournaments on $s$ vertices, where $\delta(\vv G)=\min_{v\in V(\vv G)}d^-(v)+d^+(v)$. Our result implies the Hajnal-Szemer\'edi Theorem, and its degree bound is optimal. We also make some conjectures regarding even more general results for multigraphs and partitioning into other tournaments. One of these conjectures is supported by an asymptotic result.
 \end{abstract} 

\maketitle
\section{Introduction}
Let $G=(V,E)$ be a graph. An \emph{equitable} $k$\emph{-coloring}
of $G$ is a proper $k$-coloring whose color classes differ in size
by at most one. A factor of $G$ is a set $\mathcal{F}$ of disjoint subgraphs whose union spans $G$. The subgraphs in a factor are called \emph{tiles}. A factor $\mathcal{F}$
is an $H$-factor if each of its tiles is a copy 
of $H$. If $|G|=sk$, then the color classes of an equitable $k$-coloring
form a $\overline{K}_{s}$-factor. 
Our story starts in 1963:
\begin{thm}[Corr\'{a}di \& Hajnal 1963 \cite{corradi1963maximal}]
\label{thm:CH}Every graph $G$ with $|G|=n=3k$ and $\delta(G)\geq\frac{2}{3}n$
has a $K_{3}$-factor.\end{thm}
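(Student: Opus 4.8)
The approach I would take is the \emph{extremal} (maximum-packing) method, which is also the skeleton of the original Corr\'adi--Hajnal argument and, more generally, of the Hajnal--Szemer\'edi theorem. Suppose for contradiction that $G$ has $|G|=3k$ and $\delta(G)\ge 2k$ but no $K_{3}$-factor. Among all families of pairwise vertex-disjoint triangles in $G$, fix one, $\mathcal{T}=\{T_{1},\dots,T_{m}\}$, of maximum size; by hypothesis $m\le k-1$, so the set $W:=V(G)\setminus(V(T_{1})\cup\cdots\cup V(T_{m}))$ of uncovered vertices satisfies $|W|=3(k-m)\ge 3$, and by maximality of $\mathcal{T}$ the induced graph $G[W]$ is triangle-free. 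The goal is to produce an \emph{augmentation}: a family of $m+1$ pairwise disjoint triangles, contradicting maximality. The cleanest augmentation to aim for is to find an index $i$ together with three vertices of $W$ so that $V(T_{i})$ and those three vertices can be covered by two disjoint triangles; replacing $T_{i}$ by these two triangles yields $m+1$ disjoint triangles.

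The engine is a \emph{swap}: if $w\in W$ has at least two neighbours in some $T_{i}=xyz$, say $w\sim x$ and $w\sim y$, then $(\mathcal{T}\setminus\{T_{i}\})\cup\{wxy\}$ is again a maximum family, now missing $(W\setminus\{w\})\cup\{z\}$. Thus the collection of vertex sets that can legitimately serve as ``the uncovered ones'' is closed under such rotations, and iterating swaps generates a family of reachable maximum packings. First I would show that a swap is always available: writing $e(W,V(\mathcal{T}))$ for the number of edges from $W$ to $V(T_{1})\cup\cdots\cup V(T_{m})$, the bound $\delta(G)\ge 2k$ and Mantel's theorem $e(G[W])\le|W|^{2}/4$ give $e(W,V(\mathcal{T}))=\sum_{w\in W}d_{G}(w)-2e(G[W])\ge 2k|W|-|W|^{2}/2$, and a short computation with $|W|=3(k-m)$ shows this exceeds $m\cdot|W|$; hence by averaging some $T_{i}$ receives more than $|W|$ edges from $W$, so some $w\in W$ has two neighbours in that $T_{i}$. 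The plan is then to follow a bounded-length alternating sequence of swaps that closes up into the augmenting configuration above.

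The main obstacle is precisely this last point: a single swap never decreases $|W|$, so one must argue that a \emph{sequence} of swaps is forced to terminate in an augmentation rather than cycle indefinitely, and this is where the degree hypothesis must be used in full strength. The standard device is a potential function on reachable packings---for instance the number of edges inside the union of all vertices that ever appear as uncovered, or the number of triangles $T_{i}$ all of whose vertices send an edge to $W$---combined with a case analysis according to whether $W$ attaches to a given triangle through one, two, or three edges; in the hardest case one shows that, absent an augmentation, the set of reachable ``uncovered'' vertices is small and almost complete, which is incompatible with $\delta(G)\ge 2k$. This bounded-depth-rearrangement phenomenon is exactly what makes the general Hajnal--Szemer\'edi theorem hard, and even for triangles it remains the technical core. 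An alternative that sidesteps the case analysis is a stability argument: prove the theorem directly when $G$ is far from the extremal configuration---a clique of order $2k-1$ joined completely to an independent set of order $k+1$, which has minimum degree $2k-1$ and no $K_{3}$-factor---and handle graphs close to it by an explicit local correction. For $s=3$, though, the maximum-packing argument is the most economical route.
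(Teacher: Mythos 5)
There is a genuine gap. The statement is a classical theorem that the paper only cites (it gives no proof), so the comparison has to be on the merits of your argument alone, and your argument is not complete. The counting in your second paragraph is fine as far as it goes: with $\delta(G)\ge 2k$, Mantel's bound on $G[W]$, and $|W|=3(k-m)$, one indeed gets $e(W,V(\mathcal{T}))\ge 2k|W|-|W|^{2}/2>m|W|$, so some $T_i$ receives more than $|W|$ edges from $W$ and hence some $w\in W$ has two neighbours in $T_i$. But all this yields is a \emph{swap}, which by your own admission never decreases $|W|$ and therefore never contradicts the maximality of $\mathcal{T}$ by itself. The entire content of the theorem lies in converting the availability of swaps into an \emph{augmentation} (replacing one $T_i$ by two disjoint triangles using three uncovered vertices), and your third paragraph does not do this: it names candidate devices (``a potential function on reachable packings'', ``a case analysis according to whether $W$ attaches through one, two, or three edges'', ``in the hardest case one shows that\ldots'') without defining the potential, carrying out the cases, or proving the claimed structural conclusion. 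As written, nothing rules out the possibility that every reachable maximum packing admits swaps but no augmentation, so the contradiction is never reached.

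Concretely, what is missing is a lemma of the following type and its proof: if $\mathcal{T}$ is a maximum triangle packing with $m\le k-1$, then there exist $T_i\in\mathcal{T}$ and three vertices of $W$ (or of the set of vertices reachable from $W$ by swaps) such that $G[V(T_i)\cup\{w_1,w_2,w_3\}]$ contains two disjoint triangles. Proving this requires either a sharper counting step (e.g.\ summing degrees of a fixed triple of uncovered vertices into the triangles and analysing the triangle that receives at least seven such edges, together with the edges present or absent inside $W$), or the full swap-and-potential machinery you allude to; in either case the degree bound $\delta(G)\ge 2k$ must be exploited beyond the averaging you performed, since that averaging alone is equally valid in configurations where no augmentation exists at the first step. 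The alternative ``stability'' route you mention is likewise only named, not executed. So the proposal is a reasonable plan, but the core of the proof --- the augmentation argument --- is absent.
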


The following generalization was conjectured by Erd\H{o}s \cite{erdos} in 1963,
and proved  
 seven years later: 
\begin{thm}[Hajnal \& Szemer\'{e}di 1970 \cite{hajnal1970pcp}]
\label{thm:comHS} Every graph $G$ with $|G|=n=ks$ and $\delta(G)\geq(1-1/s)n$
has a $K_{s}$-factor.
\end{thm}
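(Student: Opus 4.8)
One quick route: the theorem is the special case of our main result obtained by replacing every edge $uv$ of $G$ with the two arcs $uv$ and $vu$. In the resulting digraph $\vv G$ we have $d^-(v)=d^+(v)=d_G(v)$ for all $v$, so $\delta(\vv G)=2\delta(G)\ge 2k(s-1)>2k(s-1)-1$ and $|\vv G|=ks$; by our main result $\vv G$ then contains $k$ vertex-disjoint transitive tournaments on $s$ vertices. Two vertices of $\vv G$ are joined by an arc exactly when they form an edge of $G$, so the vertex set of any transitive tournament in $\vv G$ is a clique of $G$; hence these $k$ vertex-disjoint tournaments, having $ks$ vertices in total, form a $K_s$-factor of $G$. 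For a self-contained argument --- also a natural warm-up for the proof of the main theorem --- I would instead pass to complements: it suffices to show that every graph $H$ with $\Delta(H)\le r$ has an equitable $(r+1)$-colouring, applied with $H=\overline G$ and $r=k-1$, since then $\Delta(H)=n-1-\delta(G)\le k-1$ and, as $n=sk$, each of the $k$ colour classes has size $s$ and, being independent in $\overline G$, is a clique of $G$. I would prove this equitable-colouring statement by induction on $\|H\|$. The base case $\|H\|=0$ is trivial. For the step, delete an edge $xy$, apply induction to $H-xy$ to obtain classes $V_1,\dots,V_{r+1}$, and assume $x,y\in V_1$ (otherwise we are done). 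Since $d_{H-xy}(x)\le r-1$ and there are $r$ classes other than $V_1$, some $V_j$ with $j\ne 1$ contains no neighbour of $x$; move $x$ to $V_j$. The result is a proper $(r+1)$-colouring of $H$ itself in which every class has size $q:=n/(r+1)$ except one ``small'' class $V^-$ of size $q-1$ and one ``large'' class $V^+$ of size $q+1$. So everything reduces to a recolouring lemma: a graph with maximum degree at most $r$ carrying such a \emph{nearly equitable} proper $(r+1)$-colouring has an equitable one.

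To prove the recolouring lemma I would form the \emph{move digraph} $\vv D$ on the $r+1$ colour classes, with an arc $W\to W'$ whenever some vertex of $W$ has no neighbour in $W'$ --- equivalently, can be legally moved from $W$ to $W'$. Suppose $\vv D$ contains a directed path from $V^+$ to $V^-$, and take a shortest such path $V^+=W_0\to W_1\to\dots\to W_t=V^-$; its classes are distinct. Choose $w_i\in W_i$ with $N(w_i)\cap W_{i+1}=\emptyset$ for each $i<t$ and relocate every $w_i$ from $W_i$ to $W_{i+1}$ at once. Classes off the path are untouched; $V^+$ only loses $w_0$, dropping to size $q$; $V^-$ only gains $w_{t-1}$, rising to size $q$; each intermediate class loses $w_i$ and gains $w_{i-1}$, remaining at size $q$; and the colouring stays proper because each relocated $w_i$ has no neighbour at all in its new class. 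This is the desired equitable colouring, so I may assume $\vv D$ has no such path.

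That case is the crux. Let $\mathcal B$ be the set of classes reachable from $V^+$ in $\vv D$ and $\mathcal A$ the complementary set, so $V^+\in\mathcal B$, $V^-\in\mathcal A$, and no arc leaves $\mathcal B$ for $\mathcal A$; equivalently, every vertex lying in a class of $\mathcal B$ has a neighbour in \emph{every} class of $\mathcal A$. The naive count stalls here --- it only says such a vertex has degree at least $|\mathcal A|$, hence $|\mathcal A|\le r$, which is no contradiction --- and getting past it is the main obstacle, precisely the Kierstead--Kostochka analysis of the Hajnal--Szemer\'edi theorem. I would refine accessibility inside $\mathcal B$ (distinguishing classes that can immediately absorb the surplus vertex of $V^+$ from those that can only pass a vertex onward), use this to pin down a bounded family of classes across which $H$ must contain an almost complete bipartite graph, and then either derive a contradiction with $\Delta(H)\le r$ from a discharging/double-counting argument on the edges between $\mathcal A$ and $\mathcal B$, or exhibit an alternative single-vertex move that yields a new nearly equitable colouring strictly improving a suitable potential; pushing this dichotomy through to termination closes the induction. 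I expect the analogue of exactly this step --- now with the tiles being transitive tournaments, so that relocating a vertex must respect both in- and out-neighbourhoods and the governing parameter is $\delta(\vv G)=\min_v\big(d^-(v)+d^+(v)\big)$ --- to be the substantial work behind the stronger directed theorem.
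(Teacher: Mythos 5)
Your first argument --- doubling each edge $uv$ of $G$ into the two arcs $uv$ and $vu$, applying the paper's directed main theorem to the resulting digraph (whose minimum total degree is $2\delta(G)\ge 2k(s-1)>2k(s-1)-1$), and observing that the vertex set of each transitive $s$-tournament is a clique of $G$ --- is correct and is essentially the same reduction the paper itself uses when it notes that its main result implies the Hajnal--Szemer\'edi theorem (the paper states it in the complementary equitable-coloring form, which is equivalent). Your additional ``self-contained'' outline via complements and the move digraph is only a sketch whose crux (the Kierstead--Kostochka analysis of the case with no $V^+,V^-$-path) is not carried out, but it is not needed, since the first reduction already establishes the statement from the paper's main theorem, whose proof does not rely on Hajnal--Szemer\'edi.
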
 

Since $|G|=\Delta(G)+\delta(\overline{G})+1$, Theorem~\ref{thm:comHS} has the following complementary form, in which
Hajnal and Szemer\'edi stated their proof of Erd\H{o}s' conjecture.
 
\begin{thm}[Hajnal \& Szemer\'{e}di 1970 \cite{hajnal1970pcp}] \label{HSthm}
 \label{theorem:HS}Every graph $G$ with $\Delta(G)\leq k-1$ has an
equitable $k$-coloring. 
\end{thm}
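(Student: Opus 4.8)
The plan is to prove this theorem by induction on $n:=|G|$, with the real work being a local rearrangement argument. The base case is immediate. For the inductive step, delete an arbitrary vertex $v$; since $\Delta(G-v)\le\Delta(G)\le k-1$, the inductive hypothesis provides an equitable $k$-coloring $V_1,\dots,V_k$ of $G-v$, which I relabel so that $|V_1|\le\dots\le|V_k|$. First I would dispose of the routine cases of placing $v$: as $v$ has at most $k-1$ neighbors and there are $k$ classes, some class $W$ avoids $N(v)$, and recoloring $v$ with the color of $W$ works provided $W$ has minimum size among the classes (adding a vertex to a smallest class of an equitable coloring keeps it equitable). The only way this can fail is when $G-v$ has a \emph{unique} smallest class, which happens precisely when $k\mid n$, say $n=ks$, so that $|V_1|=s-1$ and $|V_2|=\dots=|V_k|=s$, and moreover $v$ has a neighbor in $V_1$. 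So assume henceforth that $k\mid n$ and $N(v)\cap V_1\ne\emptyset$.

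Now the rearrangement. Call a move \emph{legal} if it takes a vertex $w\in V_j$ with $N(w)\cap V_i=\emptyset$ and moves it from $V_j$ to $V_i$; such a move keeps the coloring proper. Since $N(v)\cap V_1\ne\emptyset$, $v$ has at most $k-2$ neighbors outside $V_1$, so some class other than $V_1$ avoids $N(v)$. I would look for a \emph{cascade}: pairwise distinct classes $V_1=U_0,U_1,\dots,U_t$ with $U_t\cap N(v)=\emptyset$ and vertices $w_i\in U_i$ satisfying $N(w_i)\cap U_{i-1}=\emptyset$ for $1\le i\le t$. Executing the moves $w_t\to U_{t-1},\ w_{t-1}\to U_{t-2},\ \dots,\ w_1\to U_0$ and then coloring $v$ with the color of $U_t$ produces a proper $k$-coloring in which $|V_1|$ has grown to $s$, the vacancy created in $U_t$ has been filled by $v$, and every other class still has size $s$ --- an equitable $k$-coloring of $G$. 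Thus it suffices to prove that a cascade exists; equivalently, writing $\mathcal{A}$ for the family of classes reachable from $V_1$ by legal moves (so $V_1\in\mathcal{A}$), it suffices to show that some class in $\mathcal{A}$ avoids $N(v)$.

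This last step is the main obstacle --- indeed it is the whole content of the Hajnal--Szemer\'edi theorem, and the known proofs (the original one, and the shorter argument of Kierstead and Kostochka) consist of several pages of careful analysis. Suppose no class of $\mathcal{A}$ avoids $N(v)$, and let $\mathcal{B}$ be the remaining classes; then $v$ has a neighbor in each class of $\mathcal{A}$ (so $\mathcal{A},\mathcal{B}\ne\emptyset$), and by the definition of reachability every vertex of every $B\in\mathcal{B}$ has a neighbor in every $A\in\mathcal{A}$. A crude count of the edges between $\{v\}\cup\bigcup\mathcal{A}$ and $\bigcup\mathcal{B}$ --- using $|\{v\}\cup\bigcup\mathcal{A}|=|\mathcal{A}|s$ and $\Delta(G)\le k-1$ --- is unfortunately too weak to yield a contradiction, so instead I would choose the equitable coloring of $G-v$, among all those for which the cascade is blocked, so as to optimize an auxiliary parameter (for instance to maximize $|\mathcal{A}|$), and then show that the rigid ``blocked'' configuration either admits a local exchange that strictly enlarges $\mathcal{A}$, contradicting the choice, or forces some vertex to have degree at least $k$, contradicting the hypothesis. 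Carrying out this bootstrap cleanly --- essentially a discharging argument governing the interaction between the classes of $\mathcal{A}$, the classes of $\mathcal{B}$, and $N(v)$ --- is the delicate part; everything before it is bookkeeping, and for the details I would follow the Kierstead--Kostochka presentation.
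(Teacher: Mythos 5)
Your setup (induction, the relabelled smallest class, ``legal'' moves, the family $\mathcal{A}$ of classes reachable from the deficient class, and the cascade of witness moves) is exactly the standard skeleton, and it mirrors the structure used in this paper for the directed generalization (the auxiliary digraph $\mathcal{H}$, $V^-$, and the sets $\mathcal{A},\mathcal{B}$ in the proof of Theorem~\ref{mainc}). But your proposal is not a proof: the entire difficulty of the theorem is the step you defer, namely deriving a contradiction when every class of $\mathcal{A}$ meets $N(v)$, and you explicitly leave it to ``follow the Kierstead--Kostochka presentation.'' The strategy you gesture at for that step (choose the coloring maximizing $|\mathcal{A}|$ and run a local exchange/discharging argument) is not how that argument actually goes, and there is no indication it would close: the known short proofs (and the analogous Lemma~\ref{key} here) work by an induction on the \emph{number of color classes} --- splitting off $G[B]$ and recoloring it with $b$ colors via the degree bound inherited from \eqref{eq:blocked}-type estimates --- combined with a careful count of ``solo''-type edges between the \emph{terminal} classes $\mathcal{A}'$ and $B$, not by maximizing $|\mathcal{A}|$. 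Citing the result you are asked to prove (or its standard proof) is circular in this context, so the central claim remains unestablished.

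There is also a concrete slip in your ``routine'' reduction. You claim the insertion of $v$ can only fail when $G-v$ has a unique smallest class, i.e.\ when $k\mid n$. That is false: what you need is a \emph{smallest} class disjoint from $N(v)$, and even when $k\nmid n$ it can happen that every smallest class meets $N(v)$ while the only class avoiding $N(v)$ is a larger one (e.g.\ $k=3$, $|G-v|=4$ with class sizes $2,1,1$ and the two neighbors of $v$ lying in the two singleton classes); adding $v$ to the size-$2$ class is not equitable. So the cascade argument is needed in the non-divisible case as well; this is fixable (the cascade toward a smallest class works verbatim), but as written the case analysis is wrong. For comparison, the paper itself does not reprove Hajnal--Szemer\'edi directly: it cites it and observes that it follows from Theorem~\ref{mainc} by doubling every edge of $G$ into a $2$-cycle, so that acyclic color classes are forced to be independent --- the real work being the proof of Lemma~\ref{key}, which is precisely the part missing from your proposal.
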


The degree bounds in these theorems are easily seen to be tight. For example, $G:=K_{ks} -E(K_{k+1})$ satisfies $\delta(G)\ge (1-1/s)|G|-1$ but has no $K_s$-factor. 
The original proof of Theorem~\ref{thm:comHS} was quite involved, and
 only yielded an exponential
time algorithm. Short proofs yielding polynomial time algorithms appear in \cite{KK,KKY}; the following theorem provides a fast algorithm.
\begin{thm}[Kierstead, Kostochka, Mydlarz \& Szemer\'{e}di 2010 \cite{KKMS}]
\label{theorem:alg}Every graph $G$ on $n$ vertices with $\Delta(G)\leq k-1$
can be equitably $k$-colored in $O(kn^{2})$ steps. 
\end{thm}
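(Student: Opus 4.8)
The plan is to implement the augmenting/shifting argument of Kierstead and Kostochka while keeping every step polynomial. Write $s:=\lceil n/k\rceil$. First I would reduce to the case $n=ks$ by adjoining $ks-n<k$ isolated vertices: this preserves $\Delta(G)\le k-1$, costs $O(n)$, and an equitable $k$-colouring of the padded graph restricts to one of $G$. Then I would process the vertices $v_1,\dots,v_n$ one at a time, maintaining at stage $i$ an equitable $k$-colouring of $G_i:=G[\{v_1,\dots,v_i\}]$ (all classes of size $\lceil i/k\rceil$ or $\lfloor i/k\rfloor$). When $v_{i+1}$ is inserted, $d(v_{i+1})\le k-1$ forces some colour class to avoid $N(v_{i+1})$; if placing $v_{i+1}$ there keeps the colouring equitable we continue, and otherwise we have produced a \emph{near-equitable} colouring of $G_{i+1}$: one class $V^+$ too large by one, one class $V^-$ too small by one, all other classes of the correct size. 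So it suffices to prove constructively, and within the time budget, the Core Lemma: if $\Delta(G)\le k-1$ and $G$ has a near-equitable $k$-colouring, then $G$ has an equitable $k$-colouring; summing the cost of the $n$ insertions must then give $O(kn^2)$.

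For the Core Lemma I would introduce the auxiliary digraph $\mathcal H$ on the $k$ colour classes, with an arc from $D$ to $D'$ whenever some vertex of $D'$ has no neighbour in $D$ — and more generally arcs recording the ``solo'' moves, where a vertex of $D'$ can be swapped into $D$ at the cost of ejecting its unique neighbour in $D$. The point is that a directed path from $V^-$ to $V^+$ in $\mathcal H$ translates into a sequence of single-vertex shifts (and swaps) that removes one vertex from $V^+$, inserts one vertex into $V^-$, leaves every intermediate class the same size, and keeps the colouring proper — hence equalises it. I would therefore run a breadth-first search from $V^-$ in $\mathcal H$ (implementable in $O(kn)$ time, since $\sum_v d(v)\le kn$): if it reaches $V^+$ we are done.

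The real content is the case where this search fails. Let $\mathcal A$ be the set of classes reachable from $V^-$, so $V^-\in\mathcal A$, $V^+\notin\mathcal A$, and $|\bigcup\mathcal A|=s|\mathcal A|-1$. By the definition of $\mathcal H$ no arc leaves $\mathcal A$, so every vertex outside $\bigcup\mathcal A$ has a neighbour in every class of $\mathcal A$, and the solo arcs impose further structure inside $\bigcup\mathcal A$. From here one runs a counting/discharging argument over the colour classes: the many forced edges between $\bigcup\mathcal A$ and its complement, together with $\Delta(G)\le k-1$ and the near-equitable class sizes, are either contradictory outright (as they are already when $|\mathcal A|=1$, where every vertex outside $V^-$ sends an edge into a class of size $s-1$ whose vertices have total degree at most $(s-1)(k-1)<s(k-1)+1$) or force a rigid sub-configuration on $\bigcup\mathcal A$ that can be resolved by recursing on that strictly smaller induced subgraph with $|\mathcal A|$ colours. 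This structural/discharging step — wringing a genuine violation of $\Delta\le k-1$ out of a ``stuck'' near-equitable colouring, and arranging the recursion so that the total work over all repairs stays $O(kn^2)$ — is the heart of the argument and the step I expect to be the main obstacle; everything else is bookkeeping to confirm that each repair costs $O(kn)$ and that only $O(n)$ repairs occur.
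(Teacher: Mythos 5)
Your skeleton is the right one and matches the approach behind this theorem (and behind the paper's directed analogue, Lemma~\ref{key}): insert vertices one at a time, reduce to repairing a nearly equitable colouring, build the auxiliary digraph $\mathcal H$ on colour classes, and follow an augmenting path from the large class to the small class when one exists. The $a=1$ sanity check you give is also correct. But the proof has a genuine gap exactly where you say you expect one: the entire case $V^+\notin\mathcal A$ is left as ``a counting/discharging argument'' with no argument supplied, and this case is the whole content of the theorem. The known proof needs several specific ideas you do not have: terminal classes $\mathcal A'$ (classes whose removal does not disconnect the rest of $\mathcal A$ from $V^-$), movable vertices and the degree bookkeeping of Claim~\ref{clm:edges_into_B}, the counting that forces $a'>b$, and above all \emph{solo} edges between $\mathcal A'$ and $B$, which are what allow a swap (eject the unique neighbour $x\in W\in\mathcal A'$ of some $y\in B$, bring $y$ into $W$, and route $x$ or another witness along $\mathcal H$) that strictly reduces the problem; the final contradiction comes from counting solo edges against an independent-type set inside $B$. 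None of this is recoverable from ``many forced edges between $\bigcup\mathcal A$ and its complement plus $\Delta\le k-1$'' alone.

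Moreover, the one concrete structural suggestion you do make points the wrong way: you propose recursing on the induced subgraph $\bigcup\mathcal A$ with $|\mathcal A|$ colours, but there is no maximum-degree bound available there --- vertices of $A$ may have all their neighbours inside $A$. The degree bound that makes recursion legitimate lives on the other side: every $y\in B$ is blocked in (has a neighbour in) each of the $a$ classes of $\mathcal A$, so $\Delta(G[B])\le k-1-a=b-1$, and the proof recurses on $G[B]$ (or $G[B]$ plus a modified terminal class) with $b$ colours, as in Claim~\ref{clm:distinct_solo_neighbors} of this paper's directed version. Finally, the $O(kn^2)$ bound is not mere bookkeeping on top of the existence proof: it depends on how the recursion is organized (e.g.\ that the recursive call on the $B$-side can be answered quickly when $V^+(f_2)\in\mathcal A(f_2)$, as remarked parenthetically in the paper), so the complexity claim cannot be certified until the missing combinatorial core is in place.
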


In this paper  we consider extensions of Theorem~\ref{theorem:HS} for  \emph{simple} digraphs---those having no loops and  at most
two edges $xy,yx$ between any two vertices $x,y$. The \emph{in}- and \emph{out-degrees}
of a vertex $v$ are denoted by $d^{-}(v)$ and $d^{+}(v)$; the
\emph{total degree} of $v$ is the sum $d(v):=d^{-}(v)+d^{+}(v)$.
The \emph{minimum semi-degree} of $G$ is $\delta^0(G):=\min\{\min\{d^+(v), d^-(v)\}: v\in  V\}$ and the 
\emph{maximum semi-degree} of $G$ is $\Delta^0(G):=\max\{\max\{d^+(v), d^-(v)\}: v\in V\}$.
The \emph{minimum total degree} of $G$ is $\delta(G):=\min\{d(v):v\in  V\}$ and the 
\emph{maximum total degree} of $G$ is $\Delta(G):=\max\{d(v):v\in V\}$.
Among graphs on $p$ vertices, let $\vv T_p$ be the transitive tournament, $\vv C_p$ be the directed cycle, and $\vv K_p$ be the complete digraph with all possible edges in both directions. Let $E_G^+(X,Y)=\{xy\in E(G):x\in X\wedge y\in Y\}$.

The simplest way to extend Theorems~\ref{thm:CH} and \ref{thm:comHS} to digraphs is to replace minimum and maximum degree with minimum and maximum semi-degree.  However, there are two natural ways to weaken the semi-degree bounds and obtain a stronger result.  Instead of semi-degree, one can replace minimum and maximum degree by (i) minimum and maximum total degree or (ii) minimum and maximum out-degree (equivalently in-degree); cliques are replaced by (transitive) tournaments and independent sets are replaced by acyclic sets---much more on the reasons for these choices later. The case $s=3$ is completely solved by the following two theorems: 
 
\begin{thm}[Wang 2000 \cite{wangdir}]
\label{thm:Wang}Every digraph $G$ with 
$\delta(G)\ge\frac{3|G|-3}{2}$ has $\left \lfloor |G|/3 \right \rfloor$ disjoint copies of $\vv C_3$.
Moreover, for
odd $k\in\mathbb Z^+$, the digraph $G:=\vv K_{3k}-E^+(X,Y)$, where
$X$ and $Y$ are disjoint and 
$|X|=|Y|+1=\frac{3k+1}{2}$
satisfies $\delta(G)=\frac{3|G|-5}{2}$, but 
does contain a $\vv C_3$-factor.
\end{thm}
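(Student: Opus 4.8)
\emph{Sharpness.} This half is a direct check. Writing $n=3k$, every $v\in X$ has $d(v)=\tfrac{3n-3}{2}$ while every $w\in Y$ has $d^+(w)=n-1$ and $d^-(w)=(n-1)-|X|=\tfrac{3k-3}{2}$, so $\delta(G)=\tfrac{3n-5}{2}$. Since $G$ has no arc from $X$ to $Y$, and a copy of $\vv C_3$ meeting both parts would have to use such an arc, every copy of $\vv C_3$ in $G$ lies inside $X$ or inside $Y$; since $|X|=\tfrac{3k+1}{2}$ is not a multiple of $3$, such copies cannot be made disjoint and cover $X$, so $G$ has no $\vv C_3$-factor.

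\emph{Existence: setup.} Set $n=|G|$, assume $\delta(G)\ge\tfrac{3n-3}{2}$, and take a family $\mathcal{T}=\{T_1,\dots,T_m\}$ of vertex-disjoint copies of $\vv C_3$ with $m$ maximum and, subject to that, satisfying a secondary optimality condition (for instance, minimising the number of arcs inside the uncovered set); put $U=V(G)\setminus\bigcup_i V(T_i)$. If $m<\lfloor n/3\rfloor$ then $|U|=n-3m\ge3$ and, by maximality, $G[U]$ contains no copy of $\vv C_3$. I would first record three consequences of the degree hypothesis: (i)~each vertex has at most $\tfrac{n-1}{4}$ non-neighbours in the underlying undirected graph $\underline G$, so $\delta(\underline G)\ge\tfrac34(n-1)$; (ii)~the digon graph $D$ (with $uv\in E(D)$ iff $uv,vu\in E(G)$) satisfies $\delta(D)\ge\tfrac{n-1}{2}$; (iii)~since a digraph on $N$ vertices with no directed triangle has at most $\tfrac12 N^2$ arcs, double counting the arcs between $U$ and $\bigcup_i V(T_i)$ forces $m\ge\tfrac{n-3}{6}$, i.e.\ $|U|\le\tfrac{n+3}{2}$. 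By~(i), Theorem~\ref{thm:comHS} gives $\underline G$ a $K_3$-factor when $3\mid n$, but a triangle of $\underline G$ need not induce a copy of $\vv C_3$ (it might be transitive), so this alone does not finish.

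\emph{The exchange step.} For $v\in V(G)$ and $S\subseteq V(G)$ write $d(v,S)$ for the number of arcs, in either direction, between $v$ and $S$. A short check of arc patterns shows that if some $u\in U$ and some $T_i$ satisfy $d(u,T_i)\ge4$, then $u$ lies on a copy of $\vv C_3$ using two of the three vertices of $T_i$, so the third vertex $z$ of $T_i$ is released; and if $z$ then lies on a copy of $\vv C_3$ using two vertices of $U\setminus\{u\}$, replacing $T_i$ by these two copies contradicts the maximality of $m$. When $|U|\le n/2$, the bound $\sum_i d(u,T_i)=d(u)-d(u,U)\ge\tfrac{3n-3}{2}-2(|U|-1)$, averaged over the tiles, gives a rotatable $T_i$ for \emph{every} $u\in U$, and the work that remains is to re-place the released vertex --- which may require a second rotation or the simultaneous rotation of two tiles, and is the role of the secondary optimality condition on $\mathcal{T}$. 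By~(iii), the complementary case is $n/2<|U|\le\tfrac{n+3}{2}$, where the degree condition is rigid enough to force the arcs between $U$ and the tiles, and inside $U$, to be almost one-directional; pushing this further pins $G$ within one arc of the extremal configuration $\vv K_n-E^+(X,Y)$, and since $\delta(G)$ strictly exceeds $\delta$ of that configuration, that extra arc is exactly what completes the tiling directly. The residues $n\not\equiv0\pmod3$ are handled uniformly (one simply stops at $|U|\in\{1,2\}$), and the finitely many small~$n$ are checked directly, where $\overline G$ is sparse.

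\emph{The main obstacle.} This is the last case above: converting ``no profitable exchange'' into enough structure to conclude. It amounts to a stability statement around the tight example $\vv K_n-E^+(X,Y)$, and ruling out near-extremal intermediate configurations while disposing of the handful of leftover vertices of $U$ is where most of the case analysis lives. A secondary but genuine difficulty is controlling the released vertex after a rotation, since a single rotation need not by itself produce a new copy of $\vv C_3$.
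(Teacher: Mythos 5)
The statement you were asked to prove is not proved in the paper at all: it is Wang's theorem, quoted from \cite{wangdir}, so there is no argument of the paper to compare your route against. Judged on its own terms, your write-up establishes only the second (sharpness) half. That part is correct and complete: the degree computation giving $\delta(G)=\frac{3n-5}{2}$ is right, and since every copy of $\vv C_3$ must lie entirely inside $X$ or inside $Y$ (any triangle meeting both parts would need an arc of $E^+(X,Y)$) while $|X|=\frac{3k+1}{2}\equiv 2 \pmod 3$, there is indeed no $\vv C_3$-factor. (Note the paper's wording ``does contain'' is evidently a typo for ``does not contain''; your argument proves the intended claim.)

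The existence half, which is the substance of the theorem, is a plan rather than a proof, and the gaps you yourself flag are exactly the load-bearing steps. First, after choosing a maximum family of disjoint copies of $\vv C_3$ and finding $u\in U$ and a tile $T_i$ with $d(u,T_i)\ge 4$, you only get a new triangle through $u$ and two vertices of $T_i$; re-inserting the released third vertex ``may require a second rotation or the simultaneous rotation of two tiles'' and is delegated to an unspecified secondary optimality condition on $\mathcal{T}$ --- this exchange-and-recovery analysis is the core of any proof of this type and is not carried out, nor is it shown that the proposed secondary condition (minimising arcs inside $U$) actually terminates the process. Second, the regime $n/2<|U|\le\frac{n+3}{2}$ is dismissed by asserting that the degree condition ``pins $G$ within one arc of the extremal configuration'' $\vv K_n-E^+(X,Y)$; no such stability statement is proved, and this is precisely the delicate case, since (as the paper remarks) the extremal example is not strongly connected and near-extremal configurations must be excluded by genuine structural arguments, not by analogy. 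In addition, the auxiliary bound $|U|\le\frac{n+3}{2}$ in step (iii) is only sketched (the asserted extremal count for triangle-free digraphs is invoked without a derivation of how the double count yields $m\ge\frac{n-3}{6}$). As it stands, the proposal proves the easy half and outlines, but does not execute, the hard half.
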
 

\begin{thm}
[Czygrinow, Kierstead \& Molla 2012 \cite{CKM12}] \label{cor:main}Suppose $G$
is a digraph with $|G|=n=3k$ and $\delta(G)\geq2\cdot\frac{2}{3}n-1$,
and $c\geq0$ and $t\geq1$ are integers with $c+t=k$. Then $G$
has a factor consisting of $c$ copies of $\vv C_3$ and $t$ copies of $\vv T_3$. Moreover, for every $k\in \mathbb Z^+$ the digraph $G:=\vv K_{3k}-E(\vv K_{k+1})$ satisfies $|G|=n$ and $\delta(G)=2\cdot\frac{2}{3}n-2$, but has no factor whose tiles are tournaments on three vertices.
\end{thm}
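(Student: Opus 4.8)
\emph{The extremal construction.} In $G:=\vv K_{3k}-E(\vv K_{k+1})$ the deleted complete digraph is supported on a set $W$ with $|W|=k+1$, and $W$ is independent in the underlying graph $\underline{G}$. Hence each $w\in W$ has $d(w)=2\bigl(3k-(k+1)\bigr)=4k-2=2\cdot\frac{2}{3}n-2$, while each $v\notin W$ has $d(v)=2(3k-1)$, so $|G|=n$ and $\delta(G)=2\cdot\frac{2}{3}n-2$. If $G$ had a factor whose $k$ tiles are tournaments on three vertices, then these tiles would partition $V(G)$ into $k$ triples, and as $|W|=k+1$ some triple would contain two vertices of $W$; but those vertices are non-adjacent in $\underline{G}$, so that triple contains no edge between them and cannot be a tournament. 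Hence no such factor exists.

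\emph{Reduction of the positive statement to the underlying graph.} Assume now $\delta(G)\ge 2\cdot\frac{2}{3}n-1=4k-1$. For every vertex $v$,
\[
d_{\underline{G}}(v)=|N^+(v)\cup N^-(v)|\ \ge\ \max\{d^+(v),d^-(v)\}\ \ge\ \left\lceil\frac{d(v)}{2}\right\rceil\ \ge\ \left\lceil\frac{4k-1}{2}\right\rceil\ =\ 2k\ =\ \frac{2}{3}|\underline{G}|,
\]
so by the Corr\'adi--Hajnal theorem (Theorem~\ref{thm:CH}), $\underline{G}$ has a $K_3$-factor $P$. Every tile $T\in P$ has $\underline{G}[T]=K_3$, so $G[T]$ contains a copy of a tournament on three vertices. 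Call $T$ \emph{rigid-cyclic} if $G[T]$ has exactly the three edges of a $\vv C_3$, \emph{rigid-transitive} if $G[T]$ has exactly the three edges of a $\vv T_3$, and \emph{flexible} otherwise; a short case check shows that every flexible tile can be realized as $\vv C_3$ and also as $\vv T_3$. Writing $c^-(P)$ and $c^+(P)$ for the numbers of rigid-cyclic and rigid-transitive tiles of $P$, it follows that $P$ yields a factor with exactly $j$ copies of $\vv C_3$ (and $k-j$ of $\vv T_3$) precisely when $c^-(P)\le j\le k-c^+(P)$. So it suffices to show that, for each target $c\in\{0,\dots,k-1\}$, there is a $K_3$-factor $P$ of $\underline{G}$ with $c^-(P)\le c$ and $c^+(P)\le k-c$.

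\emph{Fixing the rigid tiles.} Starting from the factor produced above, the idea is to reduce whichever rigid count is too large by local reconfigurations. Suppose $c^-(P)>c$ and pick a rigid-cyclic tile on $\{a,b,c\}$, with edge set exactly $\{a\to b,\ b\to c,\ c\to a\}$. Each of $a,b,c$ has total degree $\ge 4k-1$ but degree only $2$ inside its tile, so each is non-adjacent in $\underline{G}$ to at most $k-1$ of the $3k-3$ vertices in other tiles; thus $\{a,b,c\}$ is joined to almost all other tiles and sends or receives many edges there. A counting argument then produces a bounded collection of nearby tiles together with which the vertices of $\{a,b,c\}$ can be repartitioned into triangles of $\underline{G}$ so that $c^-(P)$ strictly decreases and no new rigid tile of either type is created; iterating brings $c^-(P)$ down to $c$, and a symmetric move handles an excess of rigid-transitive tiles. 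The counting can fail only when $G$ is close to the extremal digraph $\vv K_{3k}-E(\vv K_{k+1})$; that near-extremal case is handled separately, using that the degree hypothesis forces every set that is independent in $\underline{G}$ to have size at most $k$ --- one fewer than in the extremal example --- which is exactly the slack needed to complete an appropriate factor directly.

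\emph{The main obstacle.} The crux is this reconfiguration step. A single pair of tiles (six vertices) need not span enough edges for a purely local switch once $k$ is large, so the move must involve several tiles simultaneously or exploit global structure; and it must be arranged so that driving one rigid count to its target value does not push the other above $k-c$. Controlling both rigid counts at once, together with the separate treatment of the near-extremal configurations, is where essentially all of the work lies.
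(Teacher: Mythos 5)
There is a genuine gap, and it is exactly where the theorem's difficulty lies. Your verification of the extremal example $\vv K_{3k}-E(\vv K_{k+1})$ is correct, and so is the opening reduction: $\delta(\underline{G})\ge\lceil(4k-1)/2\rceil=2k$, so Theorem~\ref{thm:CH} gives a $K_3$-factor of the underlying graph, every tile supports some $3$-tournament, and a tile with a heavy pair can indeed be realized as either $\vv C_3$ or $\vv T_3$. But this only shows that \emph{some} factor into $3$-tournaments exists; the content of Theorem~\ref{cor:main} is that the numbers of cyclic and transitive tiles can be prescribed (any $c\le k-1$). Everything you say about achieving $c^-(P)\le c$ and $c^+(P)\le k-c$ --- ``a counting argument then produces a bounded collection of nearby tiles \dots so that $c^-(P)$ strictly decreases and no new rigid tile of either type is created,'' and ``the near-extremal case is handled separately, using that \dots every set that is independent in $\underline{G}$ has size at most $k$ --- which is exactly the slack needed'' --- is asserted, not proved, and you concede as much in your final paragraph. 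No reconfiguration step is exhibited, no argument is given that the two rigid counts can be controlled simultaneously, and the stability/extremal analysis is not carried out.

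To calibrate how much is missing: the single case $c=0$, $t=k$ of Theorem~\ref{cor:main} is precisely the $s=3$ case of Theorem~\ref{main}, whose proof (via Theorem~\ref{mainc}) occupies all of Section~2 of this paper and rests on a delicate auxiliary-digraph/witness-switching argument with vital and solo edges --- not on a local exchange among a bounded number of tiles of a Corr\'adi--Hajnal factor. (Note also that this paper does not prove Theorem~\ref{cor:main} itself; it is quoted from \cite{CKM12}, where the full statement, including the mixed $c,t$ factors and the treatment of the extremal configurations, is established.) So what you have is a correct proof of the ``moreover'' part, a correct but easy reduction for the positive part, and a program rather than a proof for the heart of the statement.
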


Theorem~\ref{thm:Wang} gives an exact answer for cyclic $3$-tournaments, and Theorem~\ref{cor:main} with $t=k$ gives an exact answer for transitive $3$-tournaments. Moreover, it also shows that  the same bound $\delta(G)\geq4k-1$ also forces a factor with any combination of cyclic and transitive $3$-tournaments, except for $k$ cyclic tournaments. 

The extremal example for Theorem~\ref{cor:main} is the natural extension of the extremal example for Theorem~\ref{thm:comHS}; more generally the digraph $G:=\vv K_{sk}-E(\vv K_{k+1})$ satisfies $|G|=n=sk$ and $\delta(G)\geq 2(1-1/s)n-2$, but does not have a factor whose tiles are $s$-tournaments. 
The extremal example for Theorem~\ref{thm:Wang} seems to be an accident of small numbers---it works because it is not strongly connected. If $s\ge4$ and $\delta(G)\geq 2(1-1/s)n-2$, then $G$ is strongly connected.

Our main result is:

\begin{thm}\label{main}
Every digraph $G$ with $|G|=n=sk$ and $\delta(G)\ge 2(1-1/s)n-1$ has a $\vv T_s$-factor.
\end{thm}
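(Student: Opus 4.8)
The natural strategy is to reduce Theorem~\ref{main} to the Hajnal--Szemer\'edi Theorem (Theorem~\ref{theorem:HS}) whenever possible, and to handle the residual cases by a direct argument on the digraph structure. The plan is as follows. First I would iteratively build the factor by a standard extremal-case/non-extremal-case split: fix a small $\varepsilon>0$, and declare $G$ \emph{extremal} if it contains a set of roughly $n/s$ vertices spanning few edges (mimicking the extremal configuration $\vv K_{sk}-E(\vv K_{k+1})$), and \emph{non-extremal} otherwise. In the non-extremal case one expects an absorbing-method / regularity-based argument: use a regular partition of the underlying digraph, set aside a small absorbing structure, tile almost all of $G$ with copies of $\vv T_s$ using that $\delta(G)\ge 2(1-1/s)n-1$ forces enough density in the reduced digraph, then absorb the leftover vertices. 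The total-degree bound, rather than a semi-degree bound, is exactly what is needed here: any vertex $v$ has $d^+(v)+d^-(v)\ge 2(1-1/s)n-1$, so among any $s-1$ other vertices whose underlying graph is complete, $v$ sends or receives enough edges to extend a transitive tournament.

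The key observation that makes $\vv T_s$ the right target (as opposed to arbitrary $s$-tournaments) is the following ``orientation'' lemma: if $x_1,\dots,x_s$ induce a complete digraph pattern in the \emph{underlying} graph of $G$ (i.e. for each pair at least one of the two arcs is present), then they contain a copy of $\vv T_s$. Indeed, a tournament on the ``majority'' arcs of each pair has a Hamiltonian path, hence a transitive subtournament; more carefully, one shows any digraph whose underlying graph is $K_s$ contains $\vv T_s$ as a spanning subdigraph by induction on $s$ (pick a vertex of maximum out-degree in an auxiliary tournament, recurse). Combined with Theorem~\ref{theorem:HS} applied to the underlying graph: set $H := \overline{U(G)}$ where $U(G)$ is the underlying graph; if $\Delta(H)\le k-1$ then $U(G)$ has a $K_s$-factor, and each $K_s$-tile yields a $\vv T_s$-tile by the orientation lemma. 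Since $d_H(v) = 2(n-1) - d_{U(G)}(v) \le 2(n-1) - (2(1-1/s)n-1)/\text{(something)}$ — here one must be careful, because $d(v)$ counts arcs with multiplicity and $d_{U(G)}(v)$ does not — this direct reduction works precisely when $G$ is ``close to a graph'', i.e. has few digons. So the real content is the regime with many digons, where the total-degree hypothesis is strictly weaker than the corresponding underlying-graph hypothesis.

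Therefore I would split on the number of digons. \textbf{Case 1 (few digons):} Let $D$ be the set of vertices incident to many digons; if $D$ is small, delete it, apply the $U(G)$-reduction above to $G-D$ (after checking the degree bound survives), and reinsert the vertices of $D$ greedily — each $v\in D$ has total degree $\ge 2(1-1/s)n-1$, which for $s\ge 2$ exceeds $n$, so $v$ has at least one digon or a matched in/out pair to most vertices, enough to slot it into $s-1$ other vertices forming the rest of a $\vv T_s$-tile, rebalancing by exchanging with an existing tile. \textbf{Case 2 (many digons):} Here the digraph is genuinely ``multigraph-like'', and I would mimic the Hajnal--Szemer\'edi proof directly on $G$: maintain a partial $\vv T_s$-factor plus a set of ``uncovered'' vertices, and use a discharging / rotation argument (as in the Kierstead--Kostochka proof in \cite{KK}) where ``available'' edges are arcs in the correct direction to extend transitive tournaments. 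The abundance of digons makes both arcs available, which is why the bound can drop from the naive $2\cdot 2(1-1/s)n$ down to $2(1-1/s)n-1$.

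\textbf{Main obstacle.} The hard part is Case 2: running the Hajnal--Szemer\'edi-style rotation argument while tracking arc orientations. In the undirected proof one repeatedly moves a vertex between color classes along augmenting structures; here each move must respect transitivity of the target tournament, so a ``swap'' that is legal in the graph setting may create a directed $3$-cycle inside a tile. Controlling this requires that at each step enough arcs point the ``right way,'' and squeezing that out of only $\delta(G)\ge 2(1-1/s)n-1$ — which is just one better than the extremal example — is where all the difficulty lies; one likely needs a careful potential function counting both the number of uncovered vertices and the number of ``misoriented'' pairs, and a case analysis showing the extremal configuration $\vv K_{sk}-E(\vv K_{k+1})$ is the unique obstruction.
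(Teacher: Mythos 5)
Your proposal has a genuine gap, and in fact two. The central ``orientation lemma'' is false: it is not true that every digraph whose underlying graph is $K_s$ contains $\vv T_s$ as a spanning subdigraph. The directed triangle $\vv C_3$ is already a counterexample: its underlying graph is $K_3$, but no vertex has out-degree $2$, so it contains no copy of $\vv T_3$. (A semicomplete digraph is guaranteed a Hamiltonian path and a transitive subtournament only of logarithmic order, not a spanning transitive tournament.) Consequently the whole ``few digons'' reduction to Theorem~\ref{theorem:HS} on the underlying graph collapses: a $K_s$-tile of the underlying graph need not yield a $\vv T_s$-tile of $G$. The correct reduction, which the paper uses, works in the complement digraph: Theorem~\ref{mainc} states that every digraph $H$ with $\Delta(H)\le 2k-1$ has an equitable \emph{acyclic} $k$-coloring, where a color class must contain no directed cycle at all, including $2$-cycles; an acyclic digraph on $s$ vertices can be completed to a transitive tournament, and the complement of a transitive tournament inside $\vv K_s$ is again a transitive tournament, hence lies in $G$. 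Acyclicity of the class in the complement --- not mere adjacency of every pair in $G$ --- is what makes the tile orientable, and your proposal never isolates this condition.

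The second gap is that your Case 2 (and the non-extremal regularity/absorbing machinery) is only a plan, and a plan of the wrong kind for this statement. Theorem~\ref{main} is exact and holds for \emph{every} $n=sk$, with a degree bound just one above the extremal example $\vv K_{sk}-E(\vv K_{k+1})$; a regular-partition/absorbing argument inherently requires $n$ large and would at best yield an asymptotic analogue such as Theorem~\ref{thmS4} of Section 4, not the stated theorem. The difficulty you correctly locate --- running a Hajnal--Szemer\'edi-type augmenting argument while respecting orientations --- is exactly what the paper resolves, but it does so in the complementary setting by extending the Kierstead--Kostochka--Mydlarz--Szemer\'edi proof: induction on the number of edges produces a nearly equitable acyclic coloring, and an auxiliary digraph on the color classes, together with counts of ``vital'', ``lonely'' and ``solo'' crossing edges (exploiting that a vertex blocked in a class sends at least two edges into it), drives the recoloring to an equitable acyclic one. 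Your proposal gestures at a potential function for this step but supplies no argument, so the main case remains unproved.
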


We prove Theorem~\ref{main} in its following stronger complementary form by extending ideas developed in \cite{KK,KKore,KKMS,KKY}. An  \emph{equitable acyclic coloring} of a digraph is a coloring whose classes induce acyclic subgraphs
 (subgraphs with no directed cycles, including $2$-cycles), 
  and differ in size by at most one. 

\begin{thm}\label{mainc}
Every digraph $G$ with $\Delta(G)\leq 2k-1$ has an equitable acyclic $k$-coloring.
\end{thm}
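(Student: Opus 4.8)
The plan is to mimic the discharging/augmentation strategy developed for equitable colorings in \cite{KK,KKore,KKMS,KKY}, with ``acyclic class'' playing the role of ``independent class'' and the optimal constant $2$ carried through every estimate. After standard reductions (padding with isolated vertices, which does not affect $\Delta$, and induction on $|G|$ with $k$ fixed), it suffices to prove: if $\Delta(G)\le 2k-1$ and $|G|=sk$ then $G$ has an acyclic $k$-coloring with all classes of size exactly $s$. For the inductive step, delete a vertex $v$; by induction $G-v$ has an equitable acyclic $k$-coloring, which — since $|G-v|=(k-1)s+(s-1)$ — has one class $V_0$ of size $s-1$ and $k-1$ classes of size $s$. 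Now re-insert $v$. Call a class $W$ \emph{available} if $G[W\cup\{v\}]$ is acyclic. Since $G[W]$ is acyclic, any cycle of $G[W\cup\{v\}]$ passes through $v$, so if $W$ is unavailable then $v$ has both an out-neighbor and an in-neighbor in $W$, contributing at least $2$ to $d(v)$; as $d(v)\le 2k-1<2k$, at most $k-1$ classes are unavailable, hence some class is available. If $V_0$ is available we are done; otherwise place $v$ in an available class $V^+\ne V_0$, reaching a \emph{nearly equitable} acyclic coloring with $|V^+|=s+1$, $|V_0|=s-1$, and all other classes of size $s$.

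The core of the argument is to turn a nearly equitable acyclic coloring into an equitable one. Build an auxiliary digraph $\mathcal H$ on the $k$ color classes, with an arc $W\to W'$ whenever some $w\in W$ can be \emph{legally moved} to $W'$, that is, $G[W'\cup\{w\}]$ is acyclic (note $G[W\setminus\{w\}]$ is automatically acyclic). If $\mathcal H$ has a directed path $V^+=W_0\to W_1\to\cdots\to W_r=V_0$, then cascade the corresponding moves: move the witnessing vertex $u_0\in W_0$ into $W_1$, then the witnessing vertex $u_1\in W_1$ (necessarily distinct from $u_0$, which was not in $W_1$) into $W_2$, and so on. Since legality of entering $W_{i+1}$ depends only on $W_{i+1}$, never on the class a vertex leaves, each intermediate class loses and gains exactly one vertex, $V^+$ drops to size $s$, and $V_0$ rises to size $s$, producing an equitable acyclic coloring.

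The difficult case is when $\mathcal H$ has no such path. Let $\mathcal A$ be the set of classes from which $V_0$ is reachable in $\mathcal H$ (including $V_0$) and $\mathcal B$ the rest; then $V^+\in\mathcal B$ and $\mathcal H$ has no arc from $\mathcal B$ to $\mathcal A$. Unwinding this: for every vertex $u$ in a $\mathcal B$-class and every $W\in\mathcal A$, the digraph $G[W\cup\{u\}]$ has a cycle, so $u$ has an out-neighbor $a\in W$, an in-neighbor $b\in W$, and a directed $a$–$b$ path inside $W$; in particular $d_W(u)\ge 2$, so $d(u)\ge 2|\mathcal A|$. Writing $m=|\mathcal A|$, counting arcs between $\mathcal B$-vertices and $\mathcal A$-vertices gives $2m((k-m)s+1)\le(ms-1)(2k-1)$, which forces $m\le k-1$ but — crucially, unlike the undirected setting — does \emph{not} by itself yield a contradiction, since for $m=1$ it merely says $s\ge 2k+1$. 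The reason is that an acyclic class, unlike an independent set, may be dense: a vertex can have many neighbors in a class without being blocked from it. The actual proof must therefore go further: pass to a \emph{minimal} such $\mathcal B$; exploit \emph{swap moves}, where if $u$'s only obstruction to entering $W\in\mathcal A$ is a minimal trap (a single out-neighbor $a$ and in-neighbor $b$ joined by a unique path, or a $2$-cycle through one vertex $a$), then relocating $a$ within the topological order of $W$, or out of $W$, frees $u$ — this enlarges the effective reachability relation on $\mathcal A$; and run a discharging or Hall-type matching argument over the $\mathcal A$-classes, using that each acyclic class carries a topological order whose sources and sinks are exactly the vertices one can always extract, or around which an incoming vertex can be appended. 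Pushing this through must produce a vertex of total degree at least $2k$, the desired contradiction.

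The main obstacle is precisely this last step: controlling the trap structure tightly enough to recover the optimal bound $2k-1$ and match the extremal example $\vv K_{sk}-E(\vv K_{k+1})$. The directed setting both weakens the naive counting (because color classes are possibly-dense acyclic digraphs, not independent sets) and complicates the bookkeeping of which internal rearrangements of a class are legal, so the swap-move analysis and the discharging must be executed with the same care as in \cite{KKMS,KKY}. It is worth noting, finally, that an acyclic class of size $s$ is exactly a topologically ordered $s$-set, whose complement within $\vv K_s$ contains a copy of $\vv T_s$; this is why Theorem~\ref{mainc} is the correct complementary form of Theorem~\ref{main}, and why the atomic operations throughout — inserting and extracting a single vertex — remain tractable.
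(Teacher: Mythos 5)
Your setup coincides with the paper's up to the point where the real work begins: the reduction to a nearly equitable (``useful'') acyclic coloring, the auxiliary digraph $\mathcal H$ on color classes, the cascade of witnesses along a $V^+,V^-$-path, and the partition into $\mathcal A$ (classes reaching $V^-$) and $\mathcal B$, with the observation that a blocked vertex sends total degree at least $2$ into each blocking class. You also correctly diagnose that the naive count over $E(A,B)$ does not close the argument, unlike in the independent-set setting. But from that point on you only gesture at what is needed (``pass to a minimal $\mathcal B$'', ``swap moves'', ``a discharging or Hall-type matching argument''), and you concede that making this work is the main obstacle. That is precisely the content of the paper's key lemma, and it is entirely absent from your proposal. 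The paper proves a strengthened statement (a good $k$-coloring exists whenever there is a useful coloring in which only the vertices of $A'\cup B$ satisfy $d(v)\le 2k-1$, where $\mathcal A'$ is the set of terminal classes of $\mathcal A$); this weakening is what permits an induction on $k$, splitting $G$ into $G[A\smallsetminus W+x']$ and $G[B-y_1]$ after a carefully chosen exchange of a vertex $x$ of a terminal class $W$ with a vertex $y_1\in B$. The contradiction in the no-path case is then obtained not by degree counting alone but by a refined edge classification: \emph{vital} edges, \emph{lonely} edges (used to prove $a'>b$), and \emph{solo} edges, together with a claim that if $G[W-x+y]$ is acyclic then $x$ is not movable and no second vertex $y'\in B'$ can be added while preserving acyclicity. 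The final count compares solo degrees from $A'$ into a maximal $2$-cycle-free set $I\subseteq B'$ containing $V^+$, with an exceptional set $J\subseteq I$ balanced against the set $A_1'$ of vertices of $A'$ with at most one solo neighbor in $I$. None of your sketched mechanisms (topological-order swaps inside a class, Hall-type matching) is shown to substitute for this machinery, so the proof has a genuine gap at its core.

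A smaller but real flaw: padding with isolated vertices to force $k\mid |G|$ does not work, because in an equitable acyclic coloring of the padded graph the padding vertices may all occupy one class, and deleting them can leave class sizes differing by more than one. The paper instead pads with $\vv K_p$, whose vertices are forced into $p$ distinct classes (any two of them form a $2$-cycle), so the induced coloring of $G$ stays equitable. Your vertex-deletion induction to produce the nearly equitable coloring is fine and essentially equivalent to the paper's edge-deletion induction, but the missing analysis of the case $V^+\notin\mathcal A$ is where the theorem actually lives.
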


To see that Theorem \ref{mainc} implies Theorem~\ref{main}, consider a digraph $G$ with $|G|=n=sk$ and $\delta(G)\ge 2(1-1/s)n-1$. Its complement $H$ satisfies $\Delta(H)\le 2n-2-(2(1-1/s)n-1)\leq 2k-1$. By Theorem~\ref{mainc}, $H$ has an equitable acyclic $k$-coloring. Since each color class is acyclic it can be embedded in a transitive $s$-tournament, whose complement is another transitive tournament 
contained in $G$.  Thus the tiles in $G$ induced by the color classes of $H$ contain transitive $s$-tournaments.

Even though a color class of an acyclic coloring may contain many edges, Theorem \ref{mainc} is  stronger than Theorem \ref{HSthm}.  To see this, let $G$ be a graph with $\Delta(G)\leq k-1$ and let $D$ be the graph obtained by replacing edge $uv$ of $G$ with two directed edges $uv$ and $vu$.  Since $\Delta(D)\leq 2k-2$, we may apply Theorem $8$ to obtain an equitable acyclic $k$-coloring.  Note that there are no edges in any color class since if $uv$ was in a color class, then $vu$ would be as well giving us a directed $2$-cycle.  Thus the equitable acyclic $k$-coloring of $D$ induces an equitable coloring of $G$.


The following two statements are neither implied by Theorem \ref{main} nor Theorem \ref{mainc} nor do they imply Theorem 
 \ref{main} or Theorem \ref{mainc}.  However our proof can be slightly modified to give these results as well:
\begin{thm}~
\begin{enumerate}
\item Every digraph $G$ with $|G|=n=sk$ and $\delta^+(G)\ge (1-1/s)n$ has a $\vv T_s$-factor.

\item Every digraph $G$ with $\Delta^+(G)\leq k-1$ has an equitable acyclic $k$-coloring.
\end{enumerate}
\end{thm}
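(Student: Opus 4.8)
The plan is to deduce (1) from (2) by complementation, exactly as Theorem~\ref{main} was deduced from Theorem~\ref{mainc}, and to obtain (2) by rerunning the proof of Theorem~\ref{mainc} with the total-degree hypothesis $\Delta(G)\le 2k-1$ replaced throughout by the out-degree hypothesis $\Delta^+(G)\le k-1$.

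For the implication (2)$\Rightarrow$(1), let $G$ be a digraph with $|G|=n=sk$ and $\delta^+(G)\ge(1-1/s)n=(s-1)k$. Its complement $H$ satisfies $d^+_H(v)=(n-1)-d^+_G(v)\le(sk-1)-(s-1)k=k-1$ for every $v$, so $\Delta^+(H)\le k-1$. By (2), $H$ has an equitable acyclic $k$-coloring, and since $n=sk$ each color class has exactly $s$ vertices. Given a color class $W$, list its vertices $w_1,\dots,w_s$ in a topological order of the acyclic digraph $H[W]$, so that every edge of $H$ inside $W$ runs from a smaller to a larger index; then for all $i>j$ the pair $w_iw_j$ is a non-edge of $H$, hence lies in $E(G)$, so $G[W]$ contains the transitive tournament with vertex order $w_s,w_{s-1},\dots,w_1$. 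Taking these tournaments over all color classes gives a $\vv T_s$-factor of $G$.

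It then remains to prove (2), which I would do by following the proof of Theorem~\ref{mainc} and checking where the hypothesis enters. The governing observation is the one used for the acyclic version of the Hajnal--Szemer\'edi method: if $A$ is acyclic and $v$ has no out-neighbor in $A$, then $A\cup\{v\}$ is acyclic, because a directed cycle through $v$ must leave $v$ along an out-edge. So a repair step aims to move a vertex $v$ lying on a directed cycle of its own class into a class that contains no out-neighbor of $v$; and since such a $v$ already has its cyclic successor as an out-neighbor inside its own class, it has at most $k-2$ out-neighbors among the other $k-1$ classes, whence by pigeonhole at least one of these classes contains no out-neighbor of $v$. This is the direct (and, because $\Delta^+$ bounds out-degree outright, somewhat cleaner) analogue of the counting step in the original proof. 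One then carries over the same analysis of the auxiliary movement digraph on the set of color classes: record the legal single-vertex moves, locate a suitable path or directed cycle in that structure, and perform the corresponding rotation so as to destroy a cyclic class while preserving equitability and not creating a new cyclic class, forcing a bookkeeping potential---e.g.\ the number of edges that lie on a directed cycle within their own class, refined lexicographically---to strictly decrease. Iterating yields an equitable acyclic $k$-coloring.

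I expect the main obstacle to be that a ``defect'' here is an entire directed cycle rather than a single monochromatic edge: such a cycle can be long, several cycles may share a class, moving one vertex of a cycle need not destroy every cycle through it, and a rotation that clears one class of an out-neighbor of $v$ may simultaneously plant an out-edge into another, previously acyclic, class. Managing this calls for choosing the right minimal defect (a $2$-cycle if one is present, otherwise a shortest directed cycle) together with a potential that decreases under every rotation, and then re-verifying each rotation of the proof of Theorem~\ref{mainc} under $\Delta^+(G)\le k-1$ rather than $\Delta(G)\le 2k-1$---in particular that the classes forbidden as destinations for $v$ (those meeting the out-neighborhood of $v$) number at most $k-1$ in general and at most $k-2$ when $v$ sits on a monochromatic directed cycle. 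Once these local checks are in place the global argument of Theorem~\ref{mainc} transfers with only cosmetic changes, and reversing all edges of $G$ then gives the in-degree form of part (2) as well.
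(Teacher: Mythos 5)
Your reduction of (1) to (2) by complementation is correct and is exactly how the paper passes from Theorem~\ref{mainc} to Theorem~\ref{main}, and your opening observation---a vertex with no out-neighbour in an acyclic class can be added to it while preserving acyclicity, and a vertex on a monochromatic cycle has at most $k-2$ out-neighbours outside its class, so some other class is available---is precisely the out-degree analogue of the step that produces the nearly equitable (``useful'') colouring. Since for this theorem the paper offers only the remark that its proof of Theorem~\ref{mainc} ``can be slightly modified,'' you are in spirit following the intended route.

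The gap is that you never carry out, nor even correctly describe, the modification of the part of that proof in which the degree hypothesis actually does its work, namely Lemma~\ref{key}. After a useful colouring is obtained, the paper does not proceed by rotations that ``destroy a cyclic class'' under a strictly decreasing potential: the easy case is an augmenting path in $\mathcal{H}$ (Claim~\ref{lem:sw}), and the entire difficulty is the case $V^+\notin\mathcal{A}$, which is resolved by a counting contradiction (together with an inner induction on $k$ in Claims~\ref{clm:solo_not_movable} and \ref{clm:distinct_solo_neighbors}) built on \eqref{eq:blocked}, \eqref{eq:vital_edges}, the parity-of-heavy-edges argument in Claim~\ref{clm:edges_into_B}, \eqref{eq:solo_and_vital_edges}, \eqref{eq:solo}, \eqref{vitalA'}, and the solo-edge analysis with $I$, $J$ and $A_1'$. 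Every one of these inequalities is phrased for total degree; the out-degree version requires one-sided analogues throughout (``blocked in $W$'' now yields only $\|y,W\|^+\ge 1$; Claim~\ref{clm:edges_into_B} must bound $\|x,B\|^+$ by roughly $b+q$; vital, lonely and solo edges must be counted in a single direction; and the factor-$2$/parity tricks exploiting $2$-cycles need replacements), and verifying that all of these halved counts still close the final contradiction is the actual content of part (2)---none of which your proposal attempts. Moreover, the obstacles you flag (long cycles, several cycles in one class, a move re-blocking another class) are not where the difficulty lies, since the existing argument handles arbitrary directed cycles with no extra effort; so as written the proposal is a correct reduction plus a misdirected sketch rather than a proof of part (2).
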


The paper is organized as follows.  In the remainder of this section  we introduce some more notation. In Section 2 we prove Theorem~\ref{mainc}. In Section 3 we introduce further conjectures concerning tiling with nontransitive tournaments, and generalizations to multigraphs. 
In Section 4 we support one of these, Conjecture~\ref{conj:large_aclique_tiling}, by proving an asymptotic version.

\subsection{Notation}
For a digraph $G=(V,E)$ set $|G|=|V|$ and $\|G\|=|E|$. Also, set 
$E^+(X,Y)=E^-(Y,X)=\{xy\in E:x\in X \wedge y\in Y\}$, and $E(X,Y)=E^+(X,Y)\cup
E^-(X,Y)$. 
Set $\|X,Y\| = |E(X,Y)|$, $\|X,Y\|^+ = |E^+(X,Y)|$ and 
$\|X,Y\|^- = |E^-(X,Y)|$. 
An edge $e$ is \emph{heavy}  if it is contained in a  $2$-cycle; otherwise
it is \emph{light}. Let $\|X,Y\|^h$ denote the number of $2$-cycles contained in $E(X,Y)$. Then $2\|X,Y\|^h$ is the number of heavy edges in $E(X,Y)$. Let $\|X,Y\|^l$ denote the number of light edges in $E(X,Y)$.
We shorten $E(\{x\},Y)$ to 
$E(x,Y)$ and $E(X,V)$ to $E(X)$, etc.

\section{Main Result}
In this section we prove Theorem~\ref{mainc}. Our proof is based on the proof of Theorem~\ref{theorem:alg}. Although we do not go into the details, it also
 provides an $O(kn^2)$ algorithm.  Otherwise, our proof could be slightly simplified by avoiding the use of $\mathcal B'$.
 
 For simplicity, we shorten \emph{equitable acyclic} to \emph{good}. 
\begin{proof}[Proof of Theorem~\ref{mainc}]
  We may assume $\left\vert G\right\vert =sk$, where $s\in\mathbb{N}$:
  If $\left\vert G\right\vert =sk-p$, where $1\leq p<k$, then let
  $G^{\prime}$ be the disjoint union of $G$ and
  $\vv K_p$. 
  Then $\left\vert G^{\prime}\right\vert $
  is divisible by $k$, and $\Delta(G^{\prime})\leq 2k-1$, any good
  $k$-coloring of $G^{\prime}$ induces a good $k$-coloring
  of $G$. 

  Argue by induction on $\|G\|$. The base
  step $\left\Vert G\right\Vert =0$ is trivial; so suppose $u$ is
  a non-isolated vertex. Set $G^{\prime}:=G-E(u)$. By induction, $G^{\prime}$
  has a good $k$-coloring $f$. We are done unless some
  color class $U$ of $f$ contains a cycle $C$ with $u\in C$. Since
  $\Delta(G)\leq 2k-1$, for some class $W$ either $\left\Vert u,W\right\Vert ^{-}=0$
  or $\left\Vert u,W\right\Vert ^{+}=0$. Moving $u$ from $U$ to $W$ yields 
  an acyclic $k$-coloring of $G$ with all classes of size $s$, except
  for one \emph{small} class $U-u$ of size $s-1$ and one \emph{large}
  class $W+u$ of size $s+1$. Such a coloring is called a 
  \emph{nearly equitable} acyclic $k$-coloring. We shorten this to 
  \emph{useful}  $k$-coloring. 

  For a useful $k$-coloring $f$, let $V^- := V^-(f)$ be the small class
  and $V^+ := V^+(f)$ be the large class of $f$, and define an auxiliary
  digraph $\mathcal{H}:=\mathcal{H}(f)$, whose vertices are the color classes,
  so that $UW$ is a directed edge if and only if $U\neq W$ and $W+y$ is acyclic
  for some $y\in U$. Such a $y$ is called a \emph{witness} for $UW$.
  If $W + y$ contains a directed cycle $C$, then we say that $y$
  is \emph{blocked} in $W$ by $C$. 
  If $y$ is blocked in $W$, then
  \begin{equation}
    \label{eq:blocked}
    \tdeg{W}{y} \ge 2.
  \end{equation}
  Let $\mathcal{A}$ be the set of classes that can reach $V^{-}$ in
  $\mathcal{H}$, $\mathcal{B}$ be the set of classes not in $\mathcal{A}$,
  and $\s{B'}$ be the set of classes that can be reached from $V^+$. 
  Call a class $W\in\mathcal{A}$ \emph{terminal}, if every $U\in\mathcal{A}-W$
  can reach $V^{-}$ in $\mathcal{H}-W$; so $V^{-}$ is terminal if
  and only if $\mathcal{A}=\{V^{-}\}$. Let $\mathcal{A}^{\prime}$
  be the set of terminal classes. A class in $\mathcal{A}$ with maximum
  distance to $V^{-}$ in $\mathcal{H}$ is terminal; 
  so $\mathcal{A}'\ne\emptyset$.
  For any $W \in V(\s{H})$ and any $x \in W$ we say $x$ is \emph{$q$-movable} 
  if it witnesses exactly $q$ edges in $E^+_{\s{H}}(W, \s{A})$.
  If $x$ is $q$-movable for $q \ge 1$, call $x$ \emph{movable}.
  Set 
  $a:=\left\vert \mathcal{A}\right\vert $, 
  $a^{\prime}:=\left\vert \mathcal{A}^{\prime}\right\vert $,
  $b:=\left\vert \mathcal{B}\right\vert $, 
  $b':= | \s{B'} |$, 
  $A:=\bigcup\mathcal{A}$,
  $A^{\prime}:=\bigcup\mathcal{A}^{\prime}$, 
  $B:=\bigcup\mathcal{B}$ and
  $B':=\bigcup\mathcal{B'}$.
   An edge  $e\in E(A,B)$ is called a crossing edge; denote its ends by $e_A$ and $e_B$, where $e_A\in A$.

  \begin{claim}
    \label{lem:sw}
    If 
    $V^{+} \in \mathcal{A}$, 
    then $G$ has a good $k$-coloring.
  \end{claim}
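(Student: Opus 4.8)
The plan is to exploit a directed path in $\mathcal H$ from $V^+$ to $V^-$ to shift vertices along it and remove the size imbalance. Suppose $V^+\in\mathcal A$, so by definition $V^+$ can reach $V^-$ in $\mathcal H$; fix a shortest such directed path $W_0W_1\cdots W_m$ with $W_0=V^+$ and $W_m=V^-$. For each edge $W_{i-1}W_i$ on this path, choose a witness $y_i\in W_{i-1}$, meaning $W_i+y_i$ is acyclic. The natural move is to process the path from the $V^+$ end: remove $y_1$ from $W_0=V^+$ (which has size $s+1$) and... but we cannot simply add $y_1$ to $W_1$ and then $y_2$ to $W_2$, since after $W_1$ receives $y_1$ it is no longer the class for which $y_2$ was a witness. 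So instead I would process the path in the usual ``rotation'' fashion: simultaneously move $y_i$ from $W_{i-1}$ to $W_i$ for every $i$ from $1$ to $m$. After this simultaneous shift, each intermediate class $W_i$ ($1\le i\le m-1$) loses $y_i$ and gains $y_{i+1}$... wait, that is off by one; let me set it up so $W_i$ loses the vertex it sends forward and gains the vertex sent to it, keeping $|W_i|=s$; the class $V^+=W_0$ only loses $y_1$, dropping to size $s$, and the class $V^-=W_m$ only gains $y_m$, rising to size $s$. Hence every class ends with size exactly $s$, and the coloring becomes equitable.

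The key point that must be verified is that each class remains acyclic after the simultaneous shift. For the terminal class $W_m=V^-$ this is exactly the witness condition: $V^-+y_m$ is acyclic. For each intermediate class $W_i$ with $1\le i\le m-1$, the new class is $(W_i-y_i)+y_{i+1}$. Since $y_{i+1}$ witnesses the edge $W_iW_{i+1}$, we know $W_{i+1}+y_{i+1}$ is acyclic, but that is about $W_{i+1}$, not $W_i$; what we actually know about $W_i$ is that $W_i+y_i$ (from the edge $W_{i-1}W_i$) is acyclic. The correct reading is: $y_i$ witnesses the edge $W_{i-1}W_i$, so $W_i+y_i$ is acyclic; after the shift $W_i$ becomes $(W_i - y_i') + y_i$ where $y_i'\in W_i$ is the witness sent forward to $W_{i+1}$, i.e. $y_i' = y_{i+1}$ chosen from $W_i$. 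Thus the new $W_i$ is a subset of $W_i + y_i$, which is acyclic, so the new $W_i$ is acyclic. Similarly $V^+$ loses a vertex and stays acyclic, and $V^-$ becomes $V^- + y_m$, which is acyclic by the witness condition.

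So the argument reduces to choosing the witnesses consistently: along the shortest path $V^+=W_0,W_1,\dots,W_m=V^-$, for each $i\in\{1,\dots,m\}$ pick $y_i\in W_{i-1}$ witnessing $W_{i-1}W_i$, then simultaneously move each $y_i$ from $W_{i-1}$ into $W_i$. I would present this as: each class $W_i$ for $1\le i\le m-1$ ends up equal to $(W_i\setminus\{y_{i+1}\})\cup\{y_i\}\subseteq W_i\cup\{y_i\}$, which is acyclic since $y_i$ witnesses $W_{i-1}W_i$; the class $W_0=V^+$ ends up as $V^+\setminus\{y_1\}$, acyclic as a subgraph of $V^+$; and the class $W_m=V^-$ ends up as $V^-\cup\{y_m\}$, acyclic since $y_m$ witnesses $W_{m-1}W_m=W_{m-1}V^-$. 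All classes now have size $s$, so this is a good $k$-coloring of $G$. The main subtlety, and the only thing requiring care, is precisely this bookkeeping of which witness belongs to which class after the simultaneous shift; there is no genuine obstacle beyond getting the indices right, since a path exists by hypothesis and every step is a local acyclicity check handled by the witness property.
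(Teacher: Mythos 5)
Your proposal is correct and follows essentially the same argument as the paper: choose a $V^{+},V^{-}$-path in $\mathcal{H}$ and shift a witness along each edge of the path, noting that each intermediate class becomes a subset of $W_i+y_i$ (acyclic by the witness property), $V^+$ shrinks to size $s$, and $V^-$ grows to size $s$ while remaining acyclic. The index bookkeeping you settle on in your final paragraph is exactly the paper's one-line proof, just written out in detail.
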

  \begin{proof}
   
    Let $\mathcal{P}=V_{1}\ldots V_{k}$ be a $V^{+},V^{-}$-path in
    $\mathcal{H}$ .  Moving witnesses $y_{j}$ of $V_{j}V_{j+1}$ to $V_{j+1}$
    for all $j$ yields a good $k$-coloring of $G$.
  \end{proof}

Establishing the next lemma completes the proof; notice the weaker degree condition.
  \begin{lem}
    \label{key}A digraph $G$ has a good $k$-coloring provided it has a useful  $k$-coloring
    $f$ with
     \begin{equation}
      \label{eq:deg_cond}
      d(v) \le 2k-1~ (= 2a + 2b - 1)~\textrm{for every vertex $v\in A^{\prime}\cup B$.}
    \end{equation}
   
  \end{lem}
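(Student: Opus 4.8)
The plan is to argue by contradiction: assume $G$ has no good $k$-coloring, and consider a useful $k$-coloring $f$ satisfying \eqref{eq:deg_cond} that is chosen to be extremal in some way — for instance, one minimizing $a = |\mathcal{A}|$, or lexicographically minimizing some vector of parameters attached to $(\mathcal{A},\mathcal{A}')$. By Claim \ref{lem:sw} we know $V^+ \in \mathcal{B}$, so $\mathcal{A}$ and $\mathcal{B}$ are both nonempty and there is genuine structure to exploit. The overall strategy, following the $\mathcal{A}$/$\mathcal{A}'$/$\mathcal{B}'$ machinery of \cite{KKMS}, is to show that either we can directly build a good coloring (by finding an $\mathcal{H}$-path from $V^+$ to $V^-$ after some local modification), or we can produce a new useful coloring that is smaller in the chosen order, contradicting extremality.

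The key steps, in order, would be: (1) Set up the counting framework for crossing edges $E(A,B)$. Using \eqref{eq:deg_cond} applied to vertices of $A' \cup B$, together with the blocked-vertex inequality \eqref{eq:blocked} and the fact that non-edges of $\mathcal{H}$ force blocked witnesses (so $\|W,x\| \ge 2$ whenever $x \in W$ is blocked in $W'$), derive an upper bound on the number of crossing edges incident to $A'$ and a lower bound coming from how many vertices must be blocked. (2) Analyze movable vertices: a vertex $x \in W$ that is $q$-movable witnesses $q$ edges into $\mathcal{A}$; if we can move such witnesses along $\mathcal{H}$ we either reach $V^-$ or we can enlarge $\mathcal{A}$ / shift the terminal structure. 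The point is that vertices of $A'$ that are movable, or crossing edges oriented from $B$ into $A'$, give us "room" to reroute. (3) Combine (1) and (2): the degree bound $2k-1 = 2a+2b-1$ is tight enough that the crossing edges between $A'$ (size roughly $a's$) and $B$ (size $bs$) cannot all be "bad" (i.e., blocking in both directions); some vertex admits a move that either produces a good coloring outright or yields a new useful coloring $f'$ with strictly smaller parameter, contradicting the choice of $f$.

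I expect the main obstacle to be step (3) — making the discharging/counting argument actually close. The arithmetic is delicate because the degree hypothesis is off by only $1$ from the trivial bound ($2k-1$ rather than $2k$), so there is essentially no slack; one must use the parity/structure of $2$-cycles carefully, distinguishing heavy and light crossing edges ($\|X,Y\|^h$ versus $\|X,Y\|^l$), and exploit that a vertex blocked by a $2$-cycle contributes a full heavy edge (two directed edges) to its degree count. The subtle case is likely when $V^-$ itself is terminal (i.e., $\mathcal{A} = \{V^-\}$, so $A' = A = V^-$ has only $s-1$ vertices): there the "$A'$" side is smaller than generic, and one must instead lean on the $\mathcal{B}'$ structure and on the large class $V^+$ having $s+1$ vertices to recover the needed count. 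Handling the interaction between the terminal classes $\mathcal{A}'$ and the reachable-from-$V^+$ set $\mathcal{B}'$ — ensuring a rerouting in $\mathcal{B}'$ composes correctly with one in $\mathcal{A}'$ — is where I would expect to spend most of the effort.
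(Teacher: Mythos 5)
Your outline reproduces the general $\mathcal{A}/\mathcal{A}'/\mathcal{B}'$ framework of \cite{KKMS}, but the part you defer to step (3) is exactly where the proof lives, and the mechanism you propose for closing it (choose $f$ extremal and produce a ``smaller'' useful coloring) is not what makes the argument work. The paper does not select an extremal useful coloring at all; it argues by induction on the number of color classes $k$, and the induction is used in an essential, non-obvious way: for $x\in W\in\mathcal{A}'$ and a candidate partner $y\in B$ with $G[W-x+y]$ acyclic, one \emph{splits the graph} into $G[A\smallsetminus W+x']$ and $G[B-y_1]$ (and a third piece $G_3$), checks that the degree hypothesis \eqref{eq:deg_cond} transfers to these pieces because every vertex of $B$ is blocked in every class of $\mathcal{A}$, and applies the lemma inductively to recombine good colorings of the pieces into a good $k$-coloring of $G$. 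This yields the structural restrictions of Claims~\ref{clm:solo_not_movable} and \ref{clm:distinct_solo_neighbors} (a vertex of a terminal class with such a partner cannot be movable, and cannot have two partners with one in $B'$), which is what converts your vague ``room to reroute'' into usable information. Nothing in your sketch supplies this splitting idea, and without it the counting in your step (1)--(2) has no way to close, precisely because, as you note, there is no slack in the bound $2k-1$.

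The second missing ingredient is the actual counting scheme. The paper introduces vital, lonely, and solo crossing edges, proves $a'>b$ (Claim~\ref{clm:aprime_big}) via the inequality $\nu(W,y)+\lambda(W,y)\ge 4$, and then runs a second count against a maximal $2$-cycle-free set $I$ with $V^+\subseteq I\subseteq B'$, where the parity-based subset $J$ and the set $A_1'$ of vertices with at most one solo neighbor in $I$ recover exactly the one unit of slack needed ($2|A'|-|J|<\sigma(A',I)\le 2|A'|-|A_1'|\le 2|A'|-|J|$). Your proposal gestures at heavy/light edges and parity but names none of these objects or inequalities. Finally, you misidentify the delicate case: the situation $\mathcal{A}=\{V^-\}$ (i.e.\ $V^-$ terminal) is dispatched early by a three-line count (Claim~\ref{V-notterminal}); the genuinely hard case is the opposite one, $\mathcal{A}\neq\mathcal{A}'$, where the terminal-class/solo-edge analysis is carried out. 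As written, the proposal is a plausible plan but not a proof: its central step is asserted rather than executed, and the tools the paper uses to execute it are absent.
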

  \begin{proof}
    Arguing by induction on $k$, assume $G$
    does not have a good $k$-coloring.

A crossing edge $e$ with $e_A\in W\in \mathcal A$ is \emph{vital} if $G[W+e_B]$ contains a directed cycle $C$ with $e\in E(C)$. 
 In particular if $xy$ is a crossing edge with $\|x,y\|=2$, then both $xy$ and $yx$ are vital.   For sets $S\subseteq A$ and $T\subseteq B$ denote the number of vital edges in $E(S,T)$, $E^-(S,T)$, and $E^+(S,T)$ by $\nu(S,T)$, $\nu^-(S,T)$, and $\nu^+(S,T)$, respectively. If $S=\{x\}$ or $T=\{y\}$, we drop the braces. Every $y\in B$ is blocked in $W$; so $\vodeg{W}{y}, \videg{W}{y} \ge 1$ and
    \begin{equation}
      \label{eq:vital_edges}
      \vdeg{W}{y} \ge 2.
    \end{equation}

    \begin{claim}
      \label{clm:edges_into_B}
      For any  $x \in W\in \mathcal{A}'$, if $x$ is $q$-movable, 
      then 
      \begin{equation*}
      \text{(a) }\|x, B\|\leq 2(b+q)+1-\|x, W\| \text{ and (b) }
\vdeg{x}{B} \le 2(b + q).
      \end{equation*}
    \end{claim}
    
    \begin{proof} 
   
   (a) There are $(a-1) - q$  classes in $\s{A} \smallsetminus W$ in which
      $x$ is blocked.
      So \eqref{eq:blocked} gives that
      $\tdeg{x}{A \smallsetminus W} \ge 2a - 2q - 2$.
      With \eqref{eq:deg_cond}, this implies
      \begin{equation*}
	\tdeg{x}{B} \le 2a + 2b - 1 - \tdeg{x}{A\smallsetminus W}-\tdeg{x}{W} 
	\le 2(b + q) + 1 - \tdeg{x}{W}.
      \end{equation*}
    
      (b) 
      By (a), $\vdeg{x}{B}\leq 2(b + q) + 1 - \tdeg{x}{W}$.
      So the desired inequality holds if $\tdeg{x}{W} \ge 1$ or
      $\vdeg{x}{B}$ is even.
      If $\tdeg{x}{W} = 0$, then every vital edge incident to $x$ must be heavy.
      This implies that $\vdeg{x}{B}$ is even.
%
   \end{proof}

\begin{claim}\label{V-notterminal}
$V^-$ is not terminal.
\end{claim}

\begin{proof}
If $V^-$ is terminal, then $\s{A}=\{V^-\}$ and $a=1$; thus there are no movable
vertices.  Claim \ref{clm:edges_into_B}(b) implies $\vdeg{u}{B}\leq 2b$ for all $u\in A$ and \eqref{eq:vital_edges} implies $\vdeg{A}{w}\geq 2$ for all $w\in B$.  This yields the contradiction
\begin{equation*}
2(bs+1)=2|B|\leq \vdeg{A}{B}\leq 2b(s-1).
\end{equation*}
\end{proof}

Using Claim~\ref{lem:sw} and Claim \ref{V-notterminal}, $V^{+}\in\mathcal{B}$ and $\mathcal{A}\neq \mathcal{A'}$; thus
    \begin{equation}
      \label{eq:order_of_sets}
      \left\vert A\right\vert =as-1,
      |A'| = a's,
      \left\vert B\right\vert =bs+1,\text{ and }
      |B'| = b's + 1.
    \end{equation}

The next claim provides a key relationship between vertices in $A'$ and vertices in $B$.
    
    \begin{claim}
      For all $x\in W \in \s{A}'$, and $y \in B$:
      \begin{enumerate} [label=(\alph*),nolistsep, ref={\theclaim(\alph*)}]
	\item
	  \label{clm:solo_not_movable}
	  if $G[W - x + y]$ is acyclic, then $x$ is not movable; and
	\item
	  \label{clm:distinct_solo_neighbors}
	 there is no $y' \in B' - y$ such that
	  $G[W - x + y + y']$ is acyclic.
      \end{enumerate}
    \end{claim}
    \begin{proof}
      By Claim~\ref{lem:sw} and Claim~\ref{V-notterminal}, 
      $W \notin \{V^-,V^+\}$.
      Suppose there exists $y \in B$ such that $G[W - x + y]$ is acyclic.  
      If there exists $y' \in B' - y$ such that
      $G[W - x + y + y']$ is acyclic,
      put $y_1 := y'$, $y_2 := y$ and $Y := \{y_1, y_2\}$;
      else put $y_1 := y$ and $Y := \{y_1\}$.
      Since $W \in \s{A}$, it contains a movable vertex.
      If $x$ is movable put $x' := x$; else
      let $x' \in W$ be any movable vertex; say $x'$ witnesses $WU$, where $U\in\s A$.
      Let $X := \{x', x\}$ and $W' := W \smallsetminus X + y_1$.

      Moving $x'$ to $U$ and switching witnesses along a $U,V^-$-path in $\s H-W$ yields
      a good $(a-1)$-coloring $f_{1}$ of        $G_{1}:=G[A\smallsetminus W+x']$.
      Also $f$
      induces a  $b$-coloring 
      $f_2$ of $G_2:=G[B - y_1]$.  
      It is good if $y_1\in V^+$; else it is  useful.  Since every $v \in B - y_1$ is blocked in
      every color class in $\s{A}(f)$,
       \eqref{eq:blocked} and \eqref{eq:deg_cond} imply
      $\Delta(G_2) \le 2k - 1 - 2a = 2b - 1$.
      By induction, there is  a good
      $b$-coloring $g_2$ of $G_2$. 
      (For algorithmic considerations, 
       note that if $y_1 \in B'$, as when $|Y| \ge 2$, then $g_2$ is immediately 
      constructible from $f_2$ using Claim~\ref{lem:sw},
      since then $
      V^+(f_2) \in \s{A}(f_2)$.)

      If $|X| = 1$, then $|W'| = s$.
      So $g_1$, $W'$ and $g_2$ form a good $k$-coloring of $G$.
      This completes the proof of (a) (see Figure~\ref{fig:4a}).
      To prove (b), 
      suppose $|X| = |Y| = 2$.
      It suffices to show that $G_3 := G[(B - y_1) + (W' + x)]$ has a good $(b+1)$-coloring. 
      By the case, $x$ is blocked in every class of $\s A \smallsetminus W$;
      so $\|x,A\smallsetminus W\|\ge2a-2$ by \eqref{eq:blocked}. Thus  $Z+x$ is acyclic for some class $Z\in \s B+W'$. So $G_3$ has a useful $(b+1)$-coloring $f_3$  with  
       $V^-(f_3)=W'$ and $V^+(f_3)=Z+x$, or  $Z=W'$ and $f_3$ is already good.  
   Since $W' + y_2$ is acyclic,
       $W' \notin \s{A}'(f_3) \cup \s{B}(f_3)$.
      By the definitions of $x$ and $B(f)$,
      every $v \in V(G_3) \smallsetminus W'$
      is blocked in every color class in $\s{A}(f) - W$.
      Thus, 
      by \eqref{eq:blocked} and \eqref{eq:deg_cond},
      $\tdeg{v}{V(G_3)} \le 2(b + 1) - 1$.
      So, by induction, there exists a good $(b+1)$-coloring $g_3$ of $G_3$
      (see Figure~\ref{fig:4b}).
    \end{proof} 

    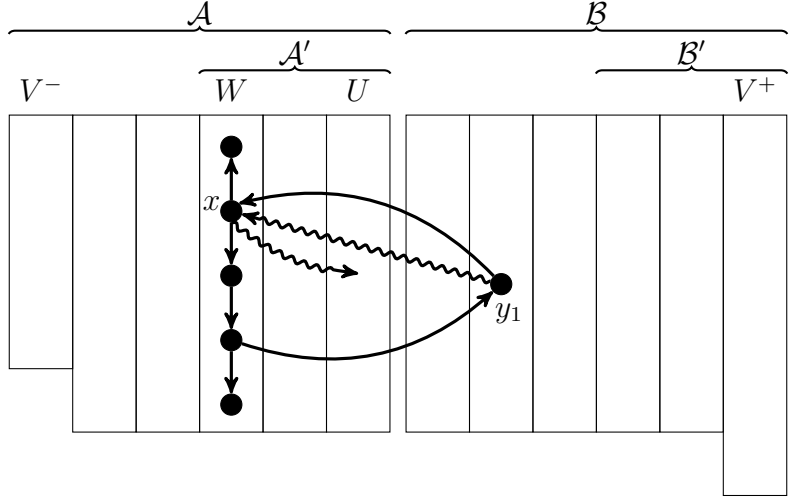
\begin{figure}
      \scalebox{1.0} {
        \begin{tikzpicture}[auto]

	  \DrawNearlyEquitable{6}{6}{3}{3}

	  \node [above=1pt of A4.north] {$W$};
          \node [above=1pt of A6.north] {$U$};

	  \node (x1) [vertex,below=8pt of A4.north] {};
	  \node (x2) [vertex,below=16pt of x1] {};
	  \node [left=-3pt of x2.north west] {$x$};
	  \node (x3) [vertex,below=16pt of x2] {};
	  \node (x4) [vertex,below=16pt of x3] {};
	  \node (x5) [vertex,below=16pt of x4] {};

	  \draw (x2) [vedge] to (x1);
	  \draw (x2) [vedge] to (x3);
	  \draw (x3) [vedge] to (x4);
	  \draw (x4) [vedge] to (x5);

	  \node (y) [vertex,below=60pt of B2.north] {};
	  \node [below=0pt of y.south east] {$y_1$};
	  \draw [vedge,bend right] (y) to (x2.north east);
	  \draw [vedge,bend right] (x4) to (y);
	  \draw (x2.south) [vedge,decorate,decoration={snake,amplitude=1pt,segment
	  length=5pt,post length=8pt},bend right=15] to (A6.base); 
	  \draw (y) [vedge,decorate,decoration={snake,amplitude=1pt,segment length=5pt,post length=6pt}] to (x2);

	\end{tikzpicture}
	}
	\caption{
	After moving $x$ and $y_1$ as indicated, switching witness
	along a $U,V^-$-path in $\mathcal{H} - W$
	creates a good $a$-coloring of $G[A + y_1]$.
	By induction, there is a good $b$-coloring of $G[B - y_1]$.
	}
	\label{fig:4a}
      \end{figure}
      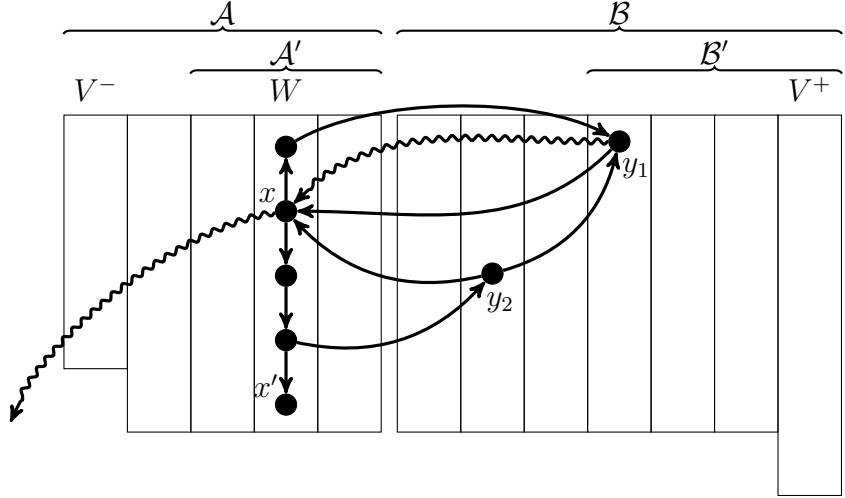
\begin{figure}
	\scalebox{1.0} {
	  \begin{tikzpicture}[auto]

	    \DrawNearlyEquitable{5}{7}{3}{4}

	    \node [above=1pt of A4.north] {$W$};

	    \node (x1) [vertex,below=8pt of A4.north] {};
	    \node (x2) [vertex,below=16pt of x1] {};
	    \node [above left=-5pt of x2.north west] {$x$};
	    \node (x3) [vertex,below=16pt of x2] {};
	    \node (x4) [vertex,below=16pt of x3] {};
	    \node (x5) [vertex,below=16pt of x4] {};
	    \node [above left=-6pt of x5.north west] {$x'$};

	    \draw (x2) [vedge] to (x1);
	    \draw (x2) [vedge] to (x3);
	    \draw (x3) [vedge] to (x4);
	    \draw (x4) [vedge] to (x5);

	    \node (y) [vertex,inner sep=1pt] at (B2) {};
	    \node [below=0pt of y.south east] {$y_2$};
	    \node (yp) [vertex,below=6pt of B4.north] {};
	    \node [below right=-2pt and -3pt of yp.south] {$y_1$};

	    \draw [vedge,bend left] (y) to (x2);
	    \draw [vedge,out=225,in=0] (yp) to (x2);
	    \draw [vedge,bend right] (x4) to (y);
	    \draw [vedge,bend left,looseness=.7] (x1) to (yp);
	    \draw [vedge,bend right] (y) to (yp);

	    \draw (yp) [vedge,decorate,decoration={snake,amplitude=1pt,segment length=5pt,post length=6pt},in=45,out=180] to (x2);
	    \draw (x2) [vedge,bend right,looseness=.7,decorate,decoration={snake,amplitude=1pt,segment length=5pt,post length=6pt}] to ([xshift=-20pt,yshift=-20pt]A1.south west);
	  \end{tikzpicture}
        }
	\caption{
	After moving $x$ and $y_1$ as indicated, switching
	witnesses (one of which is $x'$)
	creates 
	a good $(a-1)$-coloring of $G[A - W + x']$
	and 
	a good $b$-coloring $g_2$ of $G[B - y_1]$.
	Placing $x$ in a color class of $g_2$ gives a useful 
	$(b+1)$-coloring of $G[B + W - x']$
	with small class $W' := W - x - x' + y_1$.
	By induction,
	there is a good $(b+1)$-coloring of $G[B + W - x']$
	because $G[W' + y_2]$ is acyclic. 
	}
	\label{fig:4b}
      \end{figure}

    A crossing edge $e\in E(W,B)$ is \emph{lonely} if it is vital and either  (i)  $e\in E^-(W,B)$ and $\nu^-(W,e_B)=1$ or (ii) $e\in E^+(W,B)$ and $\nu^+(W,e_B)=1$. If (i), then $e$ is \emph{in-lonely}; if (ii), then $e$ is \emph{out-lonely}.
    If $e$ is lonely, then  $G[W - e_A + e_B]$ is acyclic. For sets $S\subseteq A$ and $T\subseteq B$ denote the number of lonely, in-lonely and out-lonely edges in $E(S,T)$ by $\lambda(S,T)$, $\lambda^-(S,T)$  and $\lambda^+(S,T)$, respectively; drop braces for singletons. If $y\in B$, then $y$ is blocked in $W$. So
    \begin{equation}
      \label{eq:solo_and_vital_edges}
      \vdeg{W}{y}+\lambda (W,y) \ge 4.
    \end{equation}

    \begin{claim}
      \label{clm:aprime_big}
      $a' > b$.
    \end{claim}
    \begin{proof}
      Assume $a' \le b$.
      Order $\mathcal{A}$ as $X_{1}:=V^{-},X_{2},\dots,X_{a}$ so that
      for all $j>1$ there exists $i<j$ with $X_{j}X_{i}\in E(\mathcal{H})$, and subject to this, order $\s{A}$ so that $l$ is maximum, where $l$ is the largest index of a non-terminal class. Set $W:=X_a$.

      The deletion of any non-terminal class leaves some class which can no longer reach $V^-$ in $\s{H}$; thus $l<a$, i.e., $W$ is terminal. Also $N^{+}_{\s{H}}(W)\subseteq\mathcal{A}'+X_{l}$,
      since otherwise we could increase the index $l$ by moving $W$ in
      front of $X_{l}$.  So if $x \in W$
      is $q$-movable, then 
      \begin{equation}\label{qa'}
	q \le a'.  
      \end{equation}
      If $\lambda (x,B) \ge 1$, then 
      there exists $y \in B$ such that 
      $\vodeg{x}{y} = 1$ or $\videg{x}{y} = 1$.
      In either case, $W - x + y$ is acyclic.  Therefore,
      Claim~\ref{clm:solo_not_movable} implies $q=0$; 
      since every lonely edge is vital, this and 
      Claim~\ref{clm:edges_into_B}(b) imply,
      $\lambda (x,B) \le \vdeg{x}{B} \le 2b$. 
      If $\lambda (x,B)=0$, then Claim~\ref{clm:edges_into_B}(b) and \eqref{qa'} gives
      $\vdeg{x}{B} \le 2(b + a') \le 4b.$
      Regardless, $\lambda (x,B) + \vdeg{x}{B}\leq 4b$. So 
      \begin{equation*}
	\lambda(W,B) + \vdeg{W}{B} 
	= \sum_{x \in W} \lambda(x,B) + \vdeg{x}{B} 
	\le 4b|W| \le 4bs.
      \end{equation*}
      This is a contradiction, since  
      \eqref{eq:solo_and_vital_edges} and \eqref{eq:order_of_sets} imply
      \begin{equation*}
	\lambda (W,B) + \vdeg{W}{B} 
	= \sum_{y \in B} \lambda (W,y) + \vdeg{W}{y}
	\ge 4|B| > 4bs.  \qedhere
      \end{equation*}
    \end{proof}

    A crossing edge $e\in E(W,B)$ is \emph{solo} if either  (i)  $e\in E^-(W,B)$ and $\|W,e_B\|^-=1$ or (ii) $e\in E^+(W,B)$ and $\|W,e_B\|^+=1$. If (i), then $e$ is \emph{in-solo}; if (ii), then $e$ is \emph{out-solo}.
    For sets $S\subseteq A$ and $T\subseteq B$ denote the number of solo, in-solo and out-solo edges in $E(S,T)$ by $\sigma(S,T)$, $\sigma^-(S,T)$  and $\sigma^+(S,T)$, respectively; drop braces for singletons. If $y\in B$, then $y$ is blocked in $W$. So
    \begin{equation}
      \label{eq:solo}
      \|W,y\|+\sigma (W,y) \ge 4.
    \end{equation}

%
%
%
%

    Every $y \in B'$ is blocked in every color
    class in 
    $\s{A} \cup (\s{B} \smallsetminus \s{B}')$.
    So \eqref{eq:blocked} and \eqref{eq:deg_cond} give
    \begin{equation}\label{vitalA'}
      \|A',y\| 
      \le 2a + 2b - 1 - 
      \|A\smallsetminus A',y\|-\|B \smallsetminus B',y\| - \|y,B'\|\le 2a' + 2b' - 1 - \|y,B'\|.
    \end{equation}
    Using \eqref{eq:solo} and \eqref{vitalA'} we have
    \begin{align}
      \sdeg{A'}{y} &\ge \sum_{W \in \s{A}'} \left(4 - \tdeg{W}{y}\right)
      = 4a' - \tdeg{A'}{y} \notag \ge 2a' - 2b' + \tdeg{y}{B'} + 1 
      \\ &= 2(a' - b') + 2\hdeg{y}{B'} + \ldeg{y}{B'} + 1 \label{eq:solo_edge_count}.
    \end{align}

    Choose a maximal set $I$ subject to $V^{+}\subseteq I\subseteq B'$
    and $G[I]$ contains no $2$-cycle. 
    Let 
    $$J:=\{y\in I:\sdeg{A'}{y} = 2(a^{\prime}-b')+2\left\Vert y,B'\right\Vert ^{h}+1\}.$$
    Note that, by \eqref{eq:solo_edge_count}, the vertices in $J$ have the minimum possible number of solo-neighbors in $A'$ and additionally are incident with no light edges in $B'$.

    \begin{claim}
      \label{clm:solo_neighbors_from_I}
      Every $x \in A'$ satisfies $\sdeg{x}{I} \le 2$ .
      Furthermore, if there are distinct $y_1, y_2 \in I$
      such that $\sdeg{x}{y_1}, \sdeg{x}{y_2} \ge 1$, 
      then $\{y_1, y_2\} \subseteq I \smallsetminus J$.
    \end{claim}
    \begin{proof}
      Suppose $\sdeg{x}{I} \ge 3$ for some $x\in W \in \s A'$.
      By Claim~\ref{V-notterminal},  $W \neq V^-$.
      There exist distinct $y_1, y_2 \in I$ such that either
      $\sodeg{x}{\{y_1, y_2\}} = 2$ or $\sideg{x}{\{y_1, y_2\}} = 2$.
      Suppose $\sodeg{x}{\{y_1, y_2\}} = 2$. Then $\todeg{y_i}{W-x+y_i}=0$ for
      each $i\in [2]$. The choice of $I$ implies  $\|y_1,y_2\|\le1$. So there
      exists $i\in[2]$ with $\todeg{y_i}{W-x+y_1+y_2}=0$. Thus $G[W - x + y_1 + y_2]$ is acyclic,
      contradicting Claim~\ref{clm:distinct_solo_neighbors}.

      Now suppose there exist distinct $y_1\in J$ and $y_2 \in I$ with
      $\sdeg{x}{y_1}, \sdeg{x}{y_2} \ge 1$.
      By the definition of $J$ and \eqref{eq:solo_edge_count},
      $\ldeg{y_1}{B'} = 0$.
      Therefore, by the definition of $I$,
      $\tdeg{y_1}{I} = 0$ and in particular $\tdeg{y_1}{y_2} = 0$.
      So again $G[W - x + y_1 + y_2]$ is acyclic, 
      contradicting Claim~\ref{clm:distinct_solo_neighbors}.
    \end{proof}

    The maximality of $I$ implies that
    for all $y\in B \smallsetminus I$ 
    there exists $v\in I$ with $\|y,v\|=2$.
    Therefore, 
    \begin{equation}\label{ItoB}
      \sum_{y \in I} (2\hdeg{y}{B'} + 2) = 
      2\hdeg{B' \smallsetminus I}{I} + 2|I| \ge 
      2|B' \smallsetminus I| + 2 |I| = 2 |B'|.
    \end{equation}
    Also, by \eqref{eq:solo_edge_count} and the definition of $J$,
    for every $y \in I \smallsetminus J$,
    \begin{equation}\label{I-Jsolo}
      \sdeg{A'}{y} \ge 2(a' - b') + 2\hdeg{y}{B'} + 2.
    \end{equation}
    Therefore, by \eqref{I-Jsolo}, \eqref{ItoB}, \eqref{eq:order_of_sets}, Claim~\ref{clm:aprime_big},
    and the fact that $|I|\geq |V^+|> s$,
    \begin{align}
      \sdeg{A'}{I}+|J| &= \sum_{y\in I\setminus J}\sdeg{A'}{y}+\sum_{y\in J}(\sdeg{A'}{y}+1) \notag\\
      &\geq \sum_{y\in I}(2\left(a^{\prime}-b')+2 \hdeg{y}{B'} + 2\right) > 2s(a' - b') + 2|B'| > 2|A'|  \notag\\
      \sdeg{A'}{I}&>2|A'|-|J|. \label{eq:solo_edges_lower_bound}
    \end{align}

    Claim \ref{clm:solo_neighbors_from_I} only gives $\sdeg{A'}{I}\leq 2|A'|$, so we have not reached a contradiction yet.  However, we will be saved by the fact that every vertex in $J$ forces at least one fewer solo edge between $A'$ and $I$.  Formally, let $A_1' := \{x \in A' : \sdeg{x}{I} \le 1\}$ and note that we can now write 
    \begin{equation}\label{-A1}
      \sdeg{A'}{I}\leq 2|A'|-|A_1'|.
    \end{equation}

    \begin{claim}\label{A1J}
      $|A_1'| \ge |J|$
    \end{claim}

    \begin{proof}
      For any $y \in J$, by the definition of $J$, $\sdeg{A'}{y}$ is odd.
      This implies that there exists $x \in A'$ such that $\sdeg{x}{y} = 1$.
      By Claim~\ref{clm:solo_neighbors_from_I}, 
      $\sdeg{x}{y'} = 0$ for all $y' \in I - y$.
      Therefore $x \in A_1'$.
    \end{proof}

    Finally by \eqref{eq:solo_edges_lower_bound}, \eqref{-A1}, and Claim \ref{A1J},
    \begin{equation*}
      2|A'| - |J| < \sdeg{A'}{I} \le 2|A'| - |A_1'|\leq 2|A'|-|J|,
    \end{equation*}
    a contradiction.  This completes the proof of Lemma~\ref{key}.
  \end{proof}

  Applying Lemma \ref{key} to the useful $k$-coloring $f$ completes the proof of Theorem \ref{mainc}.
\end{proof}

\section{Conjectures}

\global\long\def\mdeg#1#2{\|#1, #2\|}
\global\long\def\medges#1{\|#1\|}
\global\long\def\ledges#1{\|#1\|^{l}}
\global\long\def\eqy#1#2{Y_{#1}^{#2}}
\global\long\def\gey#1#2{Y_{#1}^{\ge#2}}
\global\long\def\ley#1#2{Y_{#1}^{\le#2}}

Removing the orientation from the edges of a directed graph $D$
leaves a loopless multigraph $M$ such that every
edge has multiplicity at most $2$. 
Call such a multigraph \emph{standard},
and say that $M(D)$ is the multigraph underlying $D$. 
For a fixed standard multigraph $M$, let $H(M)$ and $L(M)$ be the graphs on $V(M)$ containing
the edges of $M$ with multiplicity $2$ and $1$ respectively,
and put $G(M) := H(M) \cup L(M)$. If $M=M(D)$, then the edges of  $H(M)$ and $L(M)$ arise from the heavy and light edges of $D$, respectively; we extend this terminology to standard multigraphs. The multigraph $M$ is acyclic if it contains no cycles, including $2$-cycles. In other words, $G(M)$ is  acyclic and $\|H(M)\|=0$. If $G(M)$ is a complete graph we call $M$ a clique. 
The following conjecture implies Theorem~\ref{main}.
\begin{conj}
  \label{conj:meqclr}
  Every standard multigraph $M$ with $\Delta(M)\leq 2k-1$ has an equitable
  acyclic $k$-coloring. 
\end{conj}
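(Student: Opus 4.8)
The plan is to mimic the proof of Theorem~\ref{mainc} almost verbatim, replacing the digraph $G$ throughout by a standard multigraph $M$ and replacing ``acyclic subgraph of a digraph'' by ``acyclic sub-multigraph.'' The translation is essentially mechanical once one notes that the entire argument for Theorem~\ref{mainc} never actually used the orientation of edges in an essential way: every step was phrased in terms of $2$-cycles, heavy edges, and light edges, which are exactly the notions that survive passage to $M(D)$. So first I would restate the necessary definitions for multigraphs: a \emph{good} $k$-coloring of $M$ is an equitable coloring whose classes induce acyclic sub-multigraphs (equivalently, each class induces a forest in $G(M)$ containing no heavy edge), a \emph{useful} $k$-coloring is a nearly equitable acyclic coloring with one small class of size $s-1$ and one large class of size $s+1$, the auxiliary digraph $\s H$ is defined exactly as before with $UW \in E(\s H)$ iff $W+y$ is acyclic for some witness $y \in U$, and the notions of blocked vertex, vital/lonely/solo edge, movable vertex, and the sets $\s A, \s A', \s B, \s B'$ all carry over unchanged.

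Second, I would check that the key structural facts used in the proof still hold for multigraphs. Reduction to $|M| = sk$ works by adding a disjoint complete clique on $p$ vertices (a standard multigraph). The induction on $\|M\|$ (total number of edges, counting multiplicity) and the production of a useful coloring from a good coloring of $M - E(u)$ goes through because $\Delta(M) \le 2k-1$ still forces a class $W$ with $\|u,W\| \le 1$, so moving $u$ there cannot create a $2$-cycle and the only possible cycle through $u$ would need $\ge 2$ edges of $M$ between $u$ and $W$ or else a longer cycle—but in fact we just need that $u$ together with $W$ induces no $2$-cycle and, after the move, we have an acyclic coloring except possibly... here one must be slightly careful: in the digraph case moving $u$ to a class $W$ with $\|u,W\|^- = 0$ killed all cycles through $u$ because a cycle needs an in-edge; in the multigraph case a cycle through $u$ in $W+u$ needs $u$ to have $\ge 2$ neighbors in $W$ (or one heavy neighbor), so choosing $W$ with $\|u,W\| \le 1$ and $\|u,W\|^h = 0$ suffices, and such $W$ exists since $d(u) \le 2k-1 < 2k$. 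With this observed, inequalities \eqref{eq:blocked}, \eqref{eq:vital_edges}, \eqref{eq:solo_and_vital_edges}, \eqref{eq:solo} all remain valid with the same proofs, since they only count edges and $2$-cycles. Claim~\ref{lem:sw} (path-switching along a $V^+,V^-$-path) is purely combinatorial on $\s H$ and carries over, as do Claims~\ref{clm:edges_into_B} through \ref{A1J} and Lemma~\ref{key}, because every line of their proofs manipulates only $\|\cdot,\cdot\|$, $\|\cdot,\cdot\|^h$, $\|\cdot,\cdot\|^l$, counts of blocked classes, and the combinatorics of $\s H$.

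Third, I would simply run the same sequence of claims: $V^- $ is not terminal, $V^+ \in \s B$, the order-of-sets identities \eqref{eq:order_of_sets}, Claims~\ref{clm:solo_not_movable}/\ref{clm:distinct_solo_neighbors} (whose proof builds the colorings $f_1, f_2, f_3$ using induction on $k$ and Figures~\ref{fig:4a}--\ref{fig:4b}—all still valid), $a' > b$, the solo-edge counting through \eqref{eq:solo_edges_lower_bound}, and the final contradiction $2|A'| - |J| < \sigma(A', I) \le 2|A'| - |J|$. Applying the multigraph version of Lemma~\ref{key} to the useful coloring produced in the first step finishes the proof.

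The main obstacle, and the only place where genuine (rather than notational) care is needed, is the very first reduction step—producing a useful coloring from $M - E(u)$. In a digraph, ``blocked in $W$'' forced a $2$-cliquelike lower bound $\|W,y\| \ge 2$, and crucially a vertex could always be moved to \emph{some} class killing all its cycles by picking a direction with no in- or out-edges; for multigraphs one loses the orientation and must instead argue that some class $W$ satisfies both $\|u,W\| \le 1$ and contains no $2$-cycle through $u$, which is immediate from $\Delta(M) \le 2k-1$ but should be stated explicitly. Everything downstream of that—in particular the interplay between heavy and light edges in \eqref{eq:solo_edge_count} and the definition of $J$—is already written in heavy/light language and transfers with no change. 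I would therefore organize the writeup as: (1) restate definitions for standard multigraphs; (2) prove the reduction to a useful coloring, highlighting the one subtlety above; (3) state ``Lemma~\ref{key} holds verbatim for standard multigraphs with the same proof,'' indicating that each referenced inequality and claim is orientation-free; (4) conclude. If a referee wants the full details, one reproduces the Section~2 argument line by line with $G \rightsquigarrow M$ and ``digraph'' $\rightsquigarrow$ ``standard multigraph.''
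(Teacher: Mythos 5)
This statement is Conjecture~\ref{conj:meqclr} in the paper: the authors leave it open and prove only the digraph case (Theorem~\ref{mainc}) exactly, plus an asymptotic multigraph version with an extra $\varepsilon n$ in the degree bound (Theorem~\ref{thmS4}). So there is no paper proof to match, and your claim that the Section~2 argument ``never actually used the orientation of edges in an essential way'' is where your proposal breaks down. The orientation is used essentially, not notationally, at the heart of the counting: a vertex $y$ blocked in a class $W$ of a digraph must have at least one vital \emph{in}-edge and at least one vital \emph{out}-edge, and if either direction carries only one such edge, that edge is lonely (resp.\ solo); summing the two directions is what yields the crucial bounds $\nu(W,y)+\lambda(W,y)\ge 4$ in \eqref{eq:solo_and_vital_edges} and $\|W,y\|+\sigma(W,y)\ge 4$ in \eqref{eq:solo}. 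In a standard multigraph there is no in/out dichotomy, and these inequalities simply fail: take $y\in B$ joined by three light edges to vertices $w_1,w_2,w_3$ lying on a common path of the (acyclic) class $W$. Then $y$ is blocked, $\|W,y\|=\nu(W,y)=3$, yet no single crossing edge has the property that deleting its $W$-endpoint unblocks $y$, so under any reasonable multigraph notion of lonely/solo both left-hand sides equal $3$, not $4$. Since \eqref{eq:solo_and_vital_edges} drives the contradiction in Claim~\ref{clm:aprime_big} ($a'>b$) and \eqref{eq:solo} drives \eqref{eq:solo_edge_count} and the final $I$, $J$, $A_1'$ count, the engine of Lemma~\ref{key} does not survive the translation.

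The same issue infects the local acyclicity deductions you wave through: in Claim~\ref{clm:solo_neighbors_from_I} the paper concludes that $G[W-x+y_1+y_2]$ is acyclic because a vertex with no out-edges (or no in-edges) into the class cannot lie on a directed cycle; in a multigraph the analogous conclusion needs $y_i$ to have at most one edge total into $W-x+y_{3-i}$, which the solo hypothesis does not give. (Your fix of the very first reduction step --- choosing $W$ with $\|u,W\|\le 1$ --- is fine, and \eqref{eq:blocked}, \eqref{eq:vital_edges} in the weakened form $\nu(W,y)\ge2$, and Claim~\ref{clm:edges_into_B} do carry over; but those are not where the difficulty lies.) In short, the proposal's step (3), ``Lemma~\ref{key} holds verbatim for standard multigraphs with the same proof,'' is false as stated, and repairing it would require a genuinely new idea to replace the directional $2+2$ counting --- which is precisely why the authors state the multigraph version only as a conjecture and support it merely asymptotically.
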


We normally state Conjecture~\ref{conj:meqclr} in 
the following complimentary form. The complete standard multigraph $K^2_s$ on $s$ vertices is defined so that $H(K^2_s)=K_s$.  The \emph{complement}  of a standard multigraph $M$ is $\overline M:=K^2_{|M|}-M$, where $\mu_{\overline M}(xy)=2-\mu_M(xy)$. The complement of an acyclic standard multigraph on $s$-vertices is called a \emph{full $s$-clique}. 
\begin{conj}
  \label{conj:aclique_tiling}
  For every $s,k \in \N$,
  if $M$ is a standard multigraph
  on $sk$ vertices and $\delta(M) \ge 2(s-1)k - 1$,
  then $M$ contains $k$ disjoint full $s$-cliques.
\end{conj}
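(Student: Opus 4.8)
The plan is to establish the equivalent complementary form, Conjecture~\ref{conj:meqclr}, by extending the argument of Section~2 from digraphs to standard multigraphs. The reduction is the one recorded just above the conjecture: if $M$ has $sk$ vertices and $\delta(M)\ge 2(s-1)k-1$, then $\Delta(\overline M)=2(sk-1)-\delta(M)\le 2k-1$, and an equitable acyclic $k$-coloring of $\overline M$ partitions $V(M)$ into $k$ classes of size $s$, each inducing an acyclic standard multigraph whose complement within $K^2_s$ is a full $s$-clique sitting inside $M$. As in the proof of Theorem~\ref{mainc}, one first reduces to the case $|M|=sk$: if $|M|=sk-p$ with $1\le p<k$, adjoin a disjoint $K^2_p$; since $\Delta(K^2_p)=2p-2<2k-1$ and $K^2_p$ gets $p<k$ distinct colors in any acyclic coloring, a good $k$-coloring of the union restricts to one of $M$.

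The main body is an induction on $\|M\|$ following Section~2. Given a non-isolated $u$, apply induction to $M-E(u)$; if the class $U\ni u$ now contains a cycle, observe that $d(u)\le 2k-1$ is spread over $k$ classes, so some class $W$ has $\|u,W\|\le 1$; then $W\ne U$ and both $W+u$ and $U-u$ are acyclic, yielding a \emph{useful} (nearly equitable acyclic) coloring --- here, if anything, the multigraph version is easier than the digraph one, as no in/out parity enters. One then forms the auxiliary digraph $\mathcal H$ and the sets $\mathcal A,\mathcal A',\mathcal B,\mathcal B'$, witnesses, movable vertices, and crossing edges verbatim. Inequality \eqref{eq:blocked} survives because any cycle --- a $2$-cycle included --- through a vertex $y$ blocked in $W$ uses at least two edges at $y$ in $W+y$. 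Claim~\ref{lem:sw}, Claim~\ref{V-notterminal}, the structural Claims \ref{clm:solo_not_movable} and \ref{clm:distinct_solo_neighbors}, and \eqref{eq:order_of_sets} are purely combinatorial --- they only shuffle vertices among acyclic classes, apply the induction hypothesis to submultigraphs, and bound degrees against $\Delta(M)\le 2k-1$ --- so they transfer with ``digraph'' replaced by ``standard multigraph''. The parity step in Claim~\ref{clm:edges_into_B}(b) also survives: if $x$ has no edge to its own class $W$, every cycle of $M[W+y]$ through $x$ must use two parallel edges $xy$, so $xy$ is heavy and $\nu(x,B)$ is even.

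The genuine obstacle is the accounting of \emph{lonely} and \emph{solo} edges, which in Section~2 relies on the split of a vertex's edges into in-edges and out-edges. In the digraph proof a vertex $y$ blocked in $W$ has at least one vital in-edge and at least one vital out-edge from $W$, and whenever one of these counts is $1$ the corresponding edge becomes lonely (resp.\ solo); this is precisely what forces $\nu(W,y)+\lambda(W,y)\ge 4$ in \eqref{eq:solo_and_vital_edges} and $\|W,y\|+\sigma(W,y)\ge 4$ in \eqref{eq:solo}. For a standard multigraph the natural substitute is to call a crossing edge $e$, with $e_A\in W$ and $e_B=y$, \emph{solo} (resp.\ \emph{lonely}) when deleting $e_A$ from $W$ destroys every cycle through $y$ in $M[W+y]$. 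One checks that this recovers the ``$\ge 4$'' bound except in a single configuration: when $y$ has three vital light edges to three distinct vertices $v_1,v_2,v_3$ of $W$ lying in a common tree of $W$ but with no $v_i$ on the tree-path joining the other two (e.g.\ three leaves of a common substar). Then $\|W,y\|=3$, yet deleting any single vertex of $W$ leaves a cycle through $y$, so no crossing edge at $y$ is solo and $y$ contributes only $3$, not $4$, to the sum over $B$. Ruling this out --- by a cleverer weighting of crossing edges in such ``spread'' configurations, by a local rearrangement that forbids them, or by strengthening the structural claims to absorb the shortfall --- is where a new idea seems to be needed. Once analogs of \eqref{eq:vital_edges}, \eqref{eq:solo_and_vital_edges}, and \eqref{eq:solo} are in hand, the remaining steps --- Claim~\ref{clm:aprime_big}, the choice of the maximal $2$-cycle-free set $I$, Claim~\ref{clm:solo_neighbors_from_I}, the chain \eqref{eq:solo_edge_count}--\eqref{eq:solo_edges_lower_bound}, and Claim~\ref{A1J} --- count edges only by multiplicity and again use only blockedness and $\Delta(M)\le 2k-1$, so they should go through unchanged. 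I expect this spread-configuration bookkeeping to be the real difficulty, which is presumably why the statement appears as a conjecture rather than a theorem.
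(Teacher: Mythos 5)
This statement is one of the paper's open conjectures, not one of its theorems: the paper offers no proof of Conjecture~\ref{conj:aclique_tiling}, noting only that the case $s=3$ follows from the main result of \cite{CKM12} and supporting the related Conjecture~\ref{conj:large_aclique_tiling} with the approximate and asymptotic results of Section~4 (Theorems~\ref{thm:almost_tiling} and \ref{thmS4}). So there is no paper proof to compare yours against, and your proposal, by its own admission, is not a proof either. Your reduction to the complementary form (Conjecture~\ref{conj:meqclr}) and your audit of which pieces of the Section~2 argument are orientation-free are essentially sound: the reduction to $|M|=sk$, the creation of a useful coloring, \eqref{eq:blocked}, Claims~\ref{lem:sw} and \ref{V-notterminal}, the parity argument in Claim~\ref{clm:edges_into_B}(b), and the structural Claims~\ref{clm:solo_not_movable} and \ref{clm:distinct_solo_neighbors} indeed use only that cycles through a vertex consume two incident edges (with multiplicity) plus the bound $\Delta\le 2k-1$.

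The genuine gap is exactly where you place it, and it is the crux of the whole argument, not a bookkeeping detail. The inequalities \eqref{eq:vital_edges}, \eqref{eq:solo_and_vital_edges} and \eqref{eq:solo} are where the digraph structure does real work: a blocked vertex $y$ must have both a vital in-edge and a vital out-edge to $W$, and scarcity in either direction creates a lonely or solo edge, which is what drives the counting in Claim~\ref{clm:aprime_big} and the final contradiction via \eqref{eq:solo_edge_count}--\eqref{eq:solo_edges_lower_bound}. Your proposed multigraph surrogate (``deleting $e_A$ kills every cycle of $M[W+e_B]$ through $e_B$'') genuinely fails the ``$\ge 4$'' bound in the spread configuration you describe (three light edges from $y$ to three leaves of a star in $W$ gives $\|W,y\|+\sigma(W,y)=3$), and the later steps are not robust to replacing $4$ by $3$: the proof of Claim~\ref{clm:aprime_big} and the lower bound \eqref{eq:solo_edges_lower_bound} are tight enough that a loss of one per vertex of $B$ destroys the contradiction. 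You gesture at possible fixes (reweighting, local rearrangement, strengthened structural claims) but supply none, so the argument does not establish the conjecture; it is a reasonable research plan that confirms why the statement is left as a conjecture in the paper rather than a proof of it.
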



For the case $s=3$, Conjecture~\ref{conj:aclique_tiling} 
is a corollary of the main theorem in \cite{CKM12}.
We also make the following conjecture based on the work in \cite{CKM12}. 
\begin{conj}
  \label{conj:tri_strong}
  If $D$ is a strongly $2$-connected digraph on $3k$ vertices
  such that $\delta(D) \ge 4k - 1$,
  then $D$ has a cyclic triangle factor. 
\end{conj}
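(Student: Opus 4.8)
\textbf{Proof proposal for Conjecture~\ref{conj:tri_strong}.}

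The plan is to follow the strategy that produced Theorem~\ref{thm:Wang} and Theorem~\ref{cor:main}, but to add strong $2$-connectivity as the hypothesis that rules out the extremal configuration. Recall that the extremal example for Theorem~\ref{thm:Wang}, namely $\vv K_{3k}-E^+(X,Y)$ with $|X|=|Y|+1=\tfrac{3k+1}{2}$, fails to have a $\vv C_3$-factor precisely because it is \emph{not} strongly connected: every $\vv C_3$ must use at least two vertices of $Y$, so a factor would need $2k$ vertices in $Y$, but $|Y|=\tfrac{3k-1}{2}<2k$. The first step is to show that this ``one bad direction'' obstruction is essentially the only one: if $\delta(D)\ge 4k-1$ and $D$ has no cyclic triangle factor, then there is a partition $V(D)=X\cup Y$ with $|X|>|Y|$ (roughly $|X|\approx|Y|\approx\tfrac{3k}{2}$) such that $E^+(Y,X)$ is very sparse — essentially $D$ looks like $\vv K_{3k}-E^+(X,Y)$ plus a few edges. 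This is where I expect to quote or re-derive the structural/stability output of the argument in \cite{wangdir} (and its refinement in \cite{CKM12}): the degree bound $4k-1$ is exactly one short of forcing a factor, and the standard absorption-plus-regularity or direct extremal analysis shows the only near-extremal digraphs are close to the Wang example.

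The second step is to use strong $2$-connectivity to contradict this structure. In a digraph that is close to $\vv K_{3k}-E^+(X,Y)$ with $E^+(Y,X)$ sparse, almost all edges from $Y$ to $X$ are missing; strong $2$-connectivity forces at least two vertex-disjoint $Y$-to-$X$ paths, but more usefully it forces $d^+(v)$ to be ``spread out'' so that no vertex can have all of its out-neighbours confined to a small set. Concretely, I would argue that if $E^+(Y,X)$ has fewer than, say, $2k$ edges, then some vertex $y\in Y$ has $d^+(y)$ almost entirely inside $Y$, forcing $|Y|$ to be large; combined with $|X|>|Y|$ and $|X|+|Y|=3k$ this pins down $|Y|\ge$ something like $2k$ — wait, that is the wrong direction — rather, the counting shows $|X\setminus N^+(y)|$ is large, and then strong $2$-connectivity is violated because deleting one well-chosen vertex disconnects the remaining out-neighbourhood structure. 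The cleanest route: show that near-extremality forces a vertex cut of size $\le 1$ separating the ``sink side'' from the ``source side'', directly contradicting strong $2$-connectivity.

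The third step handles the genuinely non-extremal case, where $D$ is ``far'' from the Wang example. Here one runs the standard machinery: find an almost-factor of cyclic triangles greedily or via a fractional relaxation, then use an absorbing set (built at the start using the fact that $\delta(D)\ge 4k-1\ge (1-\tfrac13)\cdot 2\cdot 3k -1$, which gives every pair of vertices many common ``triangle-completing'' partners) to absorb the constant number of leftover vertices. Non-extremality is what guarantees the absorbing structure exists and that the almost-factor covers all but $o(k)$ vertices.

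\textbf{Main obstacle.} The hard part will be the stability step: proving that $\delta(D)\ge 4k-1$ together with the \emph{absence} of a $\vv C_3$-factor really does force $D$ to be within editing-distance $O(k)$ of $\vv K_{3k}-E^+(X,Y)$, rather than merely close in an $\varepsilon k^2$ sense. A weak ($\varepsilon k^2$) stability statement is not enough to contradict strong $2$-connectivity, since one could repair $\Theta(k)$ missing edges and restore high connectivity while still having no factor; one needs the sharp structural claim that the deficiency is concentrated so that a bounded vertex cut appears. Extracting that sharp structure — likely by a careful extremal argument tracking exactly which vertices have their degree ``used up'' by the missing $Y$-to-$X$ edges, in the spirit of the case analysis in \cite{CKM12} — is the crux, and it is the step I would expect to occupy the bulk of the paper.
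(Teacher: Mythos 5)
The statement you are addressing is Conjecture~\ref{conj:tri_strong}: the paper offers no proof of it at all, only the sharpness example obtained by reversing the edges at one vertex $v\in Y$ of the Wang digraph $\vv K_{3k}-E^+(X,Y)$, which is strongly connected but not strongly $2$-connected and has no cyclic triangle factor. So your proposal cannot be measured against a proof in the paper; it has to stand alone, and it does not. Its core — the ``sharp'' stability claim that $\delta(D)\ge 4k-1$ together with the absence of a $\vv C_3$-factor forces $D$ to be within $O(k)$ edits of $\vv K_{3k}-E^+(X,Y)$, with the surviving $Y$-to-$X$ edges concentrated so tightly that a vertex cut of size at most $1$ appears — is exactly the missing ingredient, as you yourself say. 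Nothing you can quote supplies it: \cite{wangdir} proves an exact result at the higher threshold $\frac{3n-3}{2}$ with no stability statement, and Theorem~\ref{cor:main} from \cite{CKM12} covers every mixture of cyclic and transitive triangles \emph{except} the all-cyclic case, whose extremal analysis is built around $\vv K_{3k}-E(K_{k+1})$, not the partition example. Note also that closeness to the Wang example in edit distance does not by itself yield a small cut; the paper's own example shows the obstruction can survive after modifying all edges at a vertex, so one must additionally control \emph{where} the few $Y\to X$ edges attach, and your second step (including the self-corrected counting) does not do this.

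There is a further concrete error in your non-extremal step: you assert that $\delta(D)\ge 4k-1$ ``gives every pair of vertices many common triangle-completing partners.'' Under a total-degree hypothesis this is false. Since $d^\pm(v)\le 3k-1$, the bound only guarantees $\delta^0(D)\ge k$, so for a pair $u,w$ with $u\to w$ the relevant count $d^+(w)+d^-(u)$ can be as small as $2k<3k$ and no completing vertex is forced; indeed in the Wang example many pairs complete no cyclic triangle at all. So even the absorbing structure cannot be extracted from the degree bound alone — it too must come out of the same unproven structural dichotomy. In short, the proposal is a plausible program (extremal/non-extremal split, absorption in the non-extremal case, strong $2$-connectivity to kill the extremal structure), but the decisive stability theorem and the cut-forcing argument are absent, so the conjecture remains open under your approach exactly where the paper leaves it.
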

By slightly modifying the example from Theorem~\ref{thm:Wang}, 
we see that this is best possible:
For any $p\in \mathbb Z^+$,
let $k := 2p + 1$
and $D' := \vv K_{3k}-E^+(X,Y)$,  where $X$ and $Y$ are disjoint
and $|X|=3p + 2, |Y| = 3p + 1$.
To form the example $D$, 
reverse the orientation of the edges incident to some $v \in Y$.
$D$ is strongly connected 
and $\delta(D) = 9p + 2 = \frac{3|D| - 3}{2} - 1$ 
but $D$ has no cyclic triangle factor,
because all cyclic triangles in $D$
that intersect $X$ and $Y$ contain $v$ and a vertex in $Y - v$.

Let $K$ be an $s$-clique and let $\mathcal{D}$ be the set of all simple digraphs $D$ such that $K=M(D)$ (equivalently the set of all simple digraphs obtained by orienting the edges of $K$); we say $K$ is
\emph{universal} if for all $D\in \mathcal{D}$, $D$ contains every tournament on $s$ vertices. 
For example, the $3$-clique $Q=K^2_3-e$ on $3$ vertices with $5$ edges is universal---every orientation of $Q$ contains both
$\vv C_3$ and $\vv K_3$. Our goal is to factor standard multigraphs into universal tiles.

Note that $K$ is universal if and only if 
for every tournament $T$ on $s$ vertices
and every orientation $D$ of $L(K)$ there
is an embedding of $D$ into $T$ (after embedding $D$ into $T$, every other
edge of $T$ corresponds to a heavy edge of $K$).
The following Theorem of Havet and Thomass\'{e}
and famous conjecture of Sumner, 
which has been proved for large values of $n$ \cite{KMOsumner11}, allow us to concisely say which cliques are universal.
\begin{thm}[Havet \& Thomass\'{e} 2000 \cite{ht00}]
  \label{thm:tourn_ham_paths}
  Every tournament
  $T$ on $n$ vertices contains every oriented path $P$ on 
  $n$ vertices except when
  $P$ is the anti-directed path and $n \in \{3, 5, 7\}$.
\end{thm}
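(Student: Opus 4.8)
The plan is to prove the Havet--Thomass\'e theorem using \emph{median orders} of tournaments. A median order of $T$ is a linear order $v_1 < \dots < v_n$ of $V(T)$ maximizing the number of forward arcs $v_iv_j$ ($i<j$). First I would record the standard properties: \textbf{(i)} every interval $v_i < \dots < v_j$ is again a median order of the sub-tournament it induces; \textbf{(ii)} $v_i \to v_{i+1}$ for all $i$, since otherwise transposing $v_i,v_{i+1}$ strictly increases the number of forward arcs---so a median order is a directed Hamiltonian path, which is R\'edei's theorem and settles the case where $P$ is itself directed; and \textbf{(iii)} the \emph{local feedback property}: for $i<j$ the vertex $v_i$ dominates at least half of $\{v_{i+1},\dots,v_j\}$ and $v_j$ is dominated by at least half of $\{v_i,\dots,v_{j-1}\}$ (move $v_i$ past $v_j$, or $v_j$ past $v_i$, to see this).

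The main argument would be an induction, say on $n$, driven by the decomposition of the oriented path $P$ into its \emph{blocks}---the maximal subpaths whose arcs are consistently oriented along the path---two consecutive blocks sharing the vertex at which $P$ reverses direction. Fixing a median order $v_1 < \dots < v_n$ of $T$, I would build the embedding block by block, sweeping the order from left to right: a block is a directed path and hence threads along a stretch of the Hamiltonian path supplied by \textbf{(ii)}; at each block boundary the orientation flips, and property \textbf{(iii)} guarantees enough still-unused vertices on the correct side of the current endpoint to launch the next block. Concretely, I would either peel off the first block and re-extract a median order of the remaining vertices via \textbf{(i)}, or split $v_1<\dots<v_n$ into two intervals and $P$ into a prefix and suffix of matching sizes, embed each recursively, and glue across the arc $v_\ell \to v_{\ell+1}$; in either case one must track enough information about where the endpoints of the partial embedding lie so that the gluing arc has the direction demanded by $P$, and choose the split point (or the first block) so that this is possible.

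The hard part---and the source of the exceptions---is when every block has length one, i.e.\ $P$ is the anti-directed path. Then the ``at least half'' bounds of \textbf{(iii)} are exactly tight, the sweep stalls, and one needs a separate, more careful argument in the spirit of Gr\"unbaum's theorem on anti-directed Hamiltonian paths: split the median order into halves $\{v_1,\dots,v_\ell\}$, $\{v_{\ell+1},\dots,v_n\}$, cut the anti-directed path into a prefix and suffix of the right lengths, embed each half by induction with prescribed behaviour at the cut vertex, and join across $v_\ell\to v_{\ell+1}$, choosing $\ell$ and tracking parities so that the seam is consistent. The delicate issue is that for small $n$ one of the halves is too short to admit the required endpoint-controlled embedding, which forces a finite case analysis; carrying this out shows that containment fails only for the anti-directed path and only for $\vv C_3$ (when $n=3$) and specific tournaments on $5$ and $7$ vertices, and a direct check confirms that these are the only exceptions in the theorem.
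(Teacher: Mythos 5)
The paper does not prove this statement at all: it imports it from Havet and Thomass\'e \cite{ht00} as a black box, so there is no internal proof to compare yours against. Judged on its own, your proposal is a strategy outline rather than a proof, and the gap sits exactly where the theorem is hard. The median-order facts (i)--(iii) are correct, give R\'edei's theorem, and do yield embeddings of oriented paths that are \emph{shorter} than the tournament, because there the ``at least half'' counts leave slack. For a Hamiltonian oriented path there is no slack: the claim that property (iii) ``guarantees enough still-unused vertices on the correct side of the current endpoint to launch the next block'' is precisely what fails, since after embedding a prefix the unused vertices need not lie on the required side of the current endpoint in the median order, and the split-and-glue step (choosing the cut so that the seam arc $v_\ell v_{\ell+1}$ has the orientation demanded by $P$) does not follow from (i)--(iii). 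Making it work requires a much stronger induction hypothesis --- controlling where the origin and terminus of the partial embedding may land, the length of the first block, and which sub-tournaments are permitted --- and that is exactly what turns the published argument into a long, delicate case analysis rather than a left-to-right sweep.

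Moreover, the exceptional structure cannot be deferred to a final ``direct check.'' The three exceptional tournaments (the cyclic triangle, the regular tournament on $5$ vertices, and the Paley tournament on $7$ vertices) can arise as the sub-tournaments to which your recursion passes, so any induction must recognize and route around them at every level; excluding them only for the top-level instance leaves the recursion unfounded. The anti-directed case is Gr\"unbaum's theorem, as you note, but the general case does not reduce to it, and ``a separate, more careful argument'' plus ``a finite case analysis'' is a description of the theorem's difficulty, not a proof. In short, you have named a plausible toolkit (median orders, block decompositions), but the essential argument --- the strengthened induction statement and the case analysis that carries the exceptions through it --- is missing.
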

\begin{conj}[Sumner 1971]
  \label{conj:sumner}
  Every orientation of every tree on $n$ vertices is
  a subgraph of every tournament on $2n - 2$ vertices.
\end{conj}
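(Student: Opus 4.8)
Sumner's conjecture is open in general, so the realistic goal is to prove it for all sufficiently large $n$, as was done in \cite{KMOsumner11}; the remaining small cases would have to be handled separately, most likely by computer. Fix a tournament $G$ on $N = 2n-2$ vertices and an oriented tree $T$ on $n$ vertices. The plan is to build an embedding of $T$ into $G$ greedily, exploiting the large slack between $n$ and $2n-2$, and to organise the argument around a structural dichotomy for $T$: since a tree with $\ell$ leaves has at most $2\ell-2$ vertices of degree $\ne 2$, there is a constant $c$ such that either (i) $T$ has at least $cn$ leaves, or (ii) $T$ has at least $cn$ vertices lying on \emph{bare paths} (internally disjoint induced paths whose internal vertices all have degree $2$ in $T$). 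I would treat the two cases by different methods.

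\emph{Leafy case.} Let $T'$ be $T$ with all leaves deleted, so $|T'| \le (1-c)n$, comfortably below $N/2$. First embed $T'$ into $G$ one vertex at a time in, say, a breadth-first order: at each step the new vertex must be an in- or out-neighbour (depending on the orientation of the edge to its parent) of its already-embedded parent, and since at every moment the number of used vertices stays well below the minimum in- and out-degrees of $G$ (each $\approx N/2$), such a vertex exists. It then remains to attach the leaves: a leaf with parent embedded at $v$ must go to an unused in- or out-neighbour of $v$, which is a bipartite assignment problem to be solved with a defect form of Hall's theorem. The Hall condition follows from degree counting in the large tournament $G$, the one subtlety being that a parent image $v$ may have abnormally small out-degree (near a ``sink''); then its in-degree is large, and one balances the two sides. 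This is the case in which the constant $2n-2$ is sharp (as when $T$ is a star), so the count has to be done tightly.

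\emph{Bare-path case.} This is the hard regime, since the greedy/Hall approach breaks when $T$ is essentially one long oriented path. Contract every bare path of $T$ to a single edge, obtaining a tree $T^{*}$ on far fewer vertices; embed $T^{*}$ into $G$ first while reserving a large ``routing'' set $R \subseteq V(G)$. One then has to join the images of the two ends of each contracted bare path by a directed path through $R$ of exactly the prescribed length and orientation pattern, with all these paths pairwise disjoint and together covering precisely the right number of vertices. The tools I would reach for are results guaranteeing oriented paths of prescribed length between prescribed endpoints inside subtournaments (in the spirit of Theorem~\ref{thm:tourn_ham_paths}, but with endpoints fixed), together with an absorption/allocation device: set aside in advance a small structure that can adjust each path's length by a bounded amount, so that a near-perfect routing can be completed to an exact one. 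Attaining the constant $2n-2$ rather than some larger $Cn$ again forces a careful analysis near the extremal tournaments (roughly, transitive-type blow-ups).

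\emph{Main obstacle.} The hard part will be the bare-path case and, inside it, squeezing the constant down to the optimal $2n-2$: the leafy argument is robust but degrades completely for path-like trees, so one genuinely needs different path-routing and absorbing machinery, and improving ``$Cn$'' to ``$2n-2$'' demands tight control around the extremal examples. A secondary obstacle is simply that this whole scheme requires $n$ large; the conjecture for small $n$ would remain out of reach, exactly as it is at present.
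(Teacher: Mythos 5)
The statement you were asked about is Sumner's conjecture, which the paper does not prove and does not claim to prove: it is stated as a conjecture, open in general, and known only for all sufficiently large $n$ by the work of K\"uhn, Mycroft and Osthus \cite{KMOsumner11}. Your submission correctly recognizes this, but what you have written is a research programme, not a proof, and it should not be presented as a proof attempt of the statement. Every load-bearing step is deferred: in the leafy case the defect Hall condition is asserted to ``follow from degree counting'' without the count being done, and this is precisely where the sharp constant $2n-2$ bites (for a star, essentially every spare vertex is needed, so there is no slack of the kind your greedy embedding of $T'$ relies on); in the bare-path case you invoke, but do not supply, a theorem guaranteeing vertex-disjoint oriented paths of prescribed lengths and orientation patterns between prescribed endpoints inside a tournament, together with an unspecified absorption device to correct lengths exactly. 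Theorem~\ref{thm:tourn_ham_paths} gives Hamiltonian oriented paths but says nothing about prescribed endpoints or prescribed sub-lengths within a reserved set, so the key lemma of your second case does not exist in the paper and is not reduced to anything proved elsewhere in your sketch.

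Concretely, the gaps are: (i) no quantitative Hall-type argument in the leafy case, and no treatment of the interaction between parents of many leaves whose images compete for the same small in- or out-neighbourhoods; (ii) no statement or proof of the path-routing and absorbing lemmas in the bare-path case, which is exactly the technical heart of \cite{KMOsumner11} (carried out there with Szemer\'edi-regularity-type machinery for tournaments and a delicate extremal analysis); (iii) no argument that the two cases of your dichotomy are exhaustive with explicit constants, nor that the constant can be driven down from some $Cn$ to $2n-2$; and (iv) even granting all of this, the result would hold only for large $n$, whereas the conjecture as stated is for all $n$. Within the present paper nothing more is needed: Sumner's conjecture is used only as a hypothesis (via Proposition~\ref{prop:sumner} and Conjecture~\ref{conj:sumner_forest}), and where an unconditional statement is required the paper cites the large-$n$ theorem of \cite{KMOsumner11}. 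So the honest conclusion is that your text identifies the right landscape but proves nothing beyond what is already cited.
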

With Theorem~\ref{thm:tourn_ham_paths},
we can state Conjecture~\ref{conj:sumner} in a form that
is more useful for our goal.
\begin{conj}
  \label{conj:sumner_forest}
  Let $T$ be a tournament on $n$ vertices and
  $F$ be a forest on at most $n$ vertices
  with $c$ non-trivial components. 
  If $F$ has at most $n/2 + c - 1$ edges,
  then $T$ contains every orientation of $F$.
\end{conj}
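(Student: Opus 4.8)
The plan is to \emph{derive} Conjecture~\ref{conj:sumner_forest} from Sumner's conjecture (Conjecture~\ref{conj:sumner}) together with the Havet--Thomass\'e theorem (Theorem~\ref{thm:tourn_ham_paths}); the reverse implication is easy --- the $c=1$ instance of Conjecture~\ref{conj:sumner_forest}, applied with a tournament on $2\ell-2$ vertices and a tree on $\ell$ vertices (where the edge bound holds with equality), is exactly Conjecture~\ref{conj:sumner} --- so this establishes the promised equivalence of the two conjectures. I would argue by induction on the number $c$ of non-trivial components of $F$. The base case $c=0$ is trivial: a forest with no edges on at most $n$ vertices embeds in any tournament on $n$ vertices.

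For the inductive step I split into two cases. \textbf{Case 1: $F$ has a non-trivial component $C$ with at least three edges.} The other $c-1$ non-trivial components contribute at least $c-1$ edges, so $\|C\|\le\|F\|-(c-1)\le n/2$, hence $n\ge 2\|C\|=2|C|-2$, and Conjecture~\ref{conj:sumner} embeds the prescribed orientation of $C$ into $T$ using exactly $|C|$ vertices. Deleting these vertices leaves a tournament $T'$ on $n-|C|$ vertices and a forest $F':=F-C$ with $c-1$ non-trivial components; a short computation --- using $\|C\|=|C|-1$ and, decisively, $|C|\ge 4$ --- gives $|F'|\le|T'|$ and $\|F'\|\le|T'|/2+(c-1)-1$, so the induction hypothesis applies. \textbf{Case 2: every non-trivial component of $F$ is a $P_2$ or a $P_3$.} Now peeling off one component does not preserve the hypotheses, so I argue directly. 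Writing $p$, $q$, $j$ for the numbers of $P_2$-components, $P_3$-components, and isolated vertices, the hypotheses become $q\le n/2-1$ and $3q+2p+j\le n$. Each $P_3$-component is oriented either as the directed path $\vv P_3$ or as a \emph{cherry} (one of the two anti-directed orientations of $P_3$, which constitute the $n=3$ exception in Theorem~\ref{thm:tourn_ham_paths}); let $q_c$ be the number of cherries. I would first embed the cherries into $q_c$ pairwise disjoint transitive triples of $T$ (mapping the middle vertex of each cherry to the sink or the source of its triple, according to which way the cherry's two edges point). Since then $n-3q_c\ge 3(q-q_c)+2p+j$, I split $3(q-q_c)+2p+j$ of the remaining vertices of $T$ into $q-q_c$ triples, $p$ pairs, and $j$ singletons, and embed the directed-path components and the arcs using Theorem~\ref{thm:tourn_ham_paths} (every tournament on three vertices contains $\vv P_3$, and every tournament on two vertices is a single arc), putting the isolated vertices into the singletons.

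The one substantive point is that $T$ really does contain $q_c$ disjoint transitive triples, for which I would prove and use the following lemma: \emph{every tournament on $3m$ vertices with $m\ge 2$ has a vertex-partition into $m$ transitive triples.} For $m=2$ this is a counting argument: a six-vertex tournament has $\binom{6}{3}-\sum_v\binom{d^+(v)}{2}$ directed triangles, and $\sum_v\binom{d^+(v)}{2}\ge 12$ by convexity (the out-degree sequence sums to $15$, so it is minimized at $2,2,2,3,3,3$), hence at most $8$ of the $20$ triples are cyclic; since the triples fall into $10$ complementary pairs, some pair consists of two transitive triples. For $m\ge 3$ one deletes an arbitrary transitive triple (present in every tournament on at least four vertices) and recurses; and when $q_c\le 1$ the bound $q\le n/2-1$ already forces $n\ge 4$, so a single transitive triple is available. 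I expect the main obstacle to be the accounting in Case 2: juggling the three orientation types of $P_3$-components, the $P_2$-components, and the isolated vertices against \emph{both} inequalities $\|F\|\le n/2+c-1$ and $|F|\le n$ at once, and confirming that the extremal configurations (notably $F$ equal to exactly $n/3$ cherries, where $3q_c=n$ and there is no slack in the transitive-triple count) are covered by the lemma. Case~1 and the lemma itself are routine.
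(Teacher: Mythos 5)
Your argument is correct, and its skeleton coincides with the paper's proof of Proposition~\ref{prop:sumner}: your Case~1 (peel off a component with at least three edges, embed it by Conjecture~\ref{conj:sumner} using $\|C\|\le\|F\|-(c-1)\le n/2$, and recurse, with $\|C\|\ge 3$ being exactly what makes the edge bound survive the deletion) is the paper's inductive step verbatim, the paper simply taking $C$ to be the largest component. Where you genuinely diverge is the residual case in which every non-trivial component is a path on at most three vertices. The paper dispatches this in one line by stitching the short oriented paths (and isolated vertices) into a single oriented Hamiltonian path and invoking Theorem~\ref{thm:tourn_ham_paths}, the anti-directed exceptions for $n\in\{3,5,7\}$ being excluded because either the edge bound forbids the anti-directed path ($n=3$) or a connecting edge whose orientation is free can be chosen to avoid it. You instead give a self-contained argument: cherries (the anti-directed orientations of the three-vertex path) go into pairwise disjoint transitive triples, whose existence you secure with the lemma that every tournament on $3m\ge 6$ vertices partitions into transitive triples (the $m=2$ counting step --- at most $8$ cyclic triangles versus $10$ complementary pairs --- is correct, as is the greedy recursion), while directed three-vertex paths, single arcs, and isolated vertices fit into arbitrary disjoint triples, pairs, and singletons, the counts closing because $|F|\le n$ and $q\le n/2-1$. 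The trade-off: the paper's route is shorter and costs nothing extra since Theorem~\ref{thm:tourn_ham_paths} is already a hypothesis of the proposition, whereas your route makes no essential use of Havet--Thomass\'e at all, so it shows the slightly stronger statement that Sumner's conjecture alone implies Conjecture~\ref{conj:sumner_forest} (and your observation that the $c=1$ case conversely yields Sumner makes the two conjectures equivalent), at the price of the extra lemma and the accounting in Case~2 --- all of which, as far as I can check, is sound.
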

\begin{prop}
  \label{prop:sumner}
  Theorem~\ref{thm:tourn_ham_paths} and Conjecture~\ref{conj:sumner} 
  imply Conjecture~\ref{conj:sumner_forest}.
\end{prop}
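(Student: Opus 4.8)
The plan is to deduce Conjecture~\ref{conj:sumner_forest} from Sumner's conjecture (Conjecture~\ref{conj:sumner}) by building, from the forest $F$, an auxiliary tree $\hat F$ small enough that Sumner's bound $2|\hat F|-2\le n$ applies, then embedding $F$ into $T$ by first embedding $\hat F$. The difficulty is that Sumner's conjecture speaks about a single tree on at most $n/2+1$ vertices, whereas $F$ may have up to $n$ vertices spread over many components; we must exploit the hypothesis that $F$ has few edges (at most $n/2+c-1$) to compensate.

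First I would reduce to the case that $F$ has no isolated vertices: if $F$ has $i$ isolated vertices, embed the rest of $F$ into the appropriate sub-tournament and then place the isolated vertices arbitrarily among the remaining vertices of $T$; so assume $F$ has exactly $c$ (non-trivial) components, say with vertex sets of sizes $n_1,\dots,n_c$, and $\sum_j(n_j-1)=\|F\|\le n/2+c-1$, i.e. $\sum_j n_j\le n/2+2c-1$. The case $c=1$ is essentially immediate: a tree $F$ with $\|F\|\le n/2$ has $|F|\le n/2+1$, so $2|F|-2\le n$ and Sumner's conjecture (in the form of Conjecture~\ref{conj:sumner}) gives the embedding directly. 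For $c\ge 2$ the idea is to glue the components into one tree by adding $c-1$ connecting edges, obtaining a tree $\hat F$ with $|\hat F|=\sum_j n_j\le n/2+2c-1$ and $\|\hat F\|=\|F\|+c-1\le n/2+2c-2$. This tree is in general too big for Sumner's bound when $c$ is large, so a direct application fails — that is the main obstacle.

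To get around this I would instead proceed greedily component by component, embedding the components of $F$ into $T$ one at a time while maintaining enough unused vertices, and use Theorem~\ref{thm:tourn_ham_paths} together with Conjecture~\ref{conj:sumner} to handle each step. Concretely: process the components in decreasing order of size. Having embedded the first $j-1$ components using $\sum_{i<j} n_i$ vertices, we must embed an oriented tree on $n_j$ vertices into the sub-tournament $T'$ induced on the $n-\sum_{i<j} n_i$ remaining vertices; Sumner's conjecture applies to this step provided $2n_j-2\le n-\sum_{i<j}n_i$. Since the components are processed in decreasing size, $\sum_{i<j}n_i\ge (j-1)n_j$, and combined with $\sum_i n_i\le n/2+2c-1$ one checks that $2n_j-2\le n-\sum_{i<j}n_i$ holds except possibly for the very first (largest) component; that single largest component is handled by the $c=1$ argument above applied at the outset (it has $n_1-1\le \|F\|\le n/2+c-1$ edges, but using that it is the \emph{largest} among $c$ components we get $n_1\le (n/2+2c-1)/1$ is too weak, so one instead notes $c\cdot n_1\ge\sum n_i$ is false — rather $n_1$ alone satisfies $n_1\le \|F\|+1$ only when $c=1$). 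The arithmetic bookkeeping here is the crux, and I expect the clean statement to be: order components so sizes are non-increasing; then for each $j$ one has $\sum_{i<j}n_i+2n_j-2\le n$, which follows from $n_j\le n_{j-1}$ (giving $\sum_{i<j}n_i\ge (j-1)n_j$) plus the edge count $\sum_i(n_i-1)\le n/2+c-1$; the small-index cases (where $j-1$ is not yet large enough to dominate) are exactly where the slack $n/2$ in the edge bound is consumed, and Theorem~\ref{thm:tourn_ham_paths} is invoked only to rule out the anti-directed-path exceptions in Conjecture~\ref{conj:sumner_forest}'s hypothesis being vacuous, i.e. to confirm paths behave. Carrying out this induction, after all $c$ components are placed we have embedded $F$ into $T$, completing the proof.
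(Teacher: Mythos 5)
There is a genuine gap: the ``clean statement'' your greedy argument rests on, namely that $\sum_{i<j}n_i+2n_j-2\le n$ for every $j$ when components are processed in non-increasing order, is simply false under the hypotheses of Conjecture~\ref{conj:sumner_forest}. Take $F$ to be $n/3$ disjoint paths on $3$ vertices (so $c=n/3$, $\|F\|=2n/3\le n/2+c-1$ for $n\ge 6$, e.g.\ $n=24$, $c=8$): at the last step $\sum_{i<c}n_i=n-3$ vertices are already used, only $3$ remain, while Sumner's conjecture needs a host on $2\cdot 3-2=4$ vertices; your inequality reads $(n-3)+4\le n$, which fails. Nor is this a mere bookkeeping slack: the leftover $3$ vertices may induce a cyclic triangle and the last component may be the anti-directed path on $3$ vertices, which genuinely does not embed, so no amount of tightening the arithmetic saves a step-by-step application of Sumner when the forest is (nearly) spanning and its components are short paths. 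Your two other loose ends are symptomatic of the same problem: the bound $2n_1-2\le n$ for the first component is not derived (the correct argument, which you half-reach for and abandon, is that the other $c-1$ non-trivial components each carry an edge, so $\|D_1\|\le\|F\|-(c-1)\le n/2$ and hence $|D_1|\le n/2+1$), and your invocation of Theorem~\ref{thm:tourn_ham_paths} ``to confirm paths behave'' does no identifiable work.

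The paper's proof avoids exactly these traps. It splits into two cases by the size of the largest component $D_1$. If $\|D_1\|\le 2$, the whole forest is a union of paths on at most $3$ vertices; these are linked into a single oriented path on $n$ vertices and embedded via Havet--Thomass\'e (this is where Theorem~\ref{thm:tourn_ham_paths} is really needed, precisely to handle the spanning-short-paths situation that breaks your greedy scheme, with the anti-directed exceptions checked for small $n$). If $\|D_1\|\ge 3$, then $\|D_1\|\le\|F\|-(c-1)\le n/2$ gives $2|D_1|-2\le n$, so Sumner embeds $D_1$, and the inequality $\|D_2\|\le (n-\|D_1\|-1)/2+(c-1)-1$ (which uses $\|D_1\|\ge 3$) lets one apply induction on $c$ to the remaining forest inside the remaining tournament --- i.e.\ the induction is on the statement of Conjecture~\ref{conj:sumner_forest} itself, not a repeated raw application of Sumner's bound. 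To repair your write-up you would need to add this case split and the Havet--Thomass\'e linking argument, at which point you have reproduced the paper's proof.
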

\begin{proof}
  Assume Conjecture~\ref{conj:sumner} is true.
  Let $D$ be an orientation of $F$.
We will argue by induction on $c$.
  Let $D_1$ be the largest component in $D$, 
  $D_2 := D - D_1$ and
  $m_i := \|D_i\|$ for $i \in \{1,2\}$.
  We can assume that $m_1 \ge 3$. Indeed, if
  $m_1 \le 2$, then $F$ is a collection of disjoint paths each on 
  at most $3$ vertices. Since $\|F\| \le 1$ when $n = 3$,
  Theorem~\ref{thm:tourn_ham_paths} implies that
  there is an embedding of  $D$ into $T$.

  Because there are $c - 1$ non-trivial components in $D_2$,
  $m_2 \ge c - 1$.
  Therefore,
  $m_1 \le n/2$ and, 
  since $D_1$ is a tree, $2|D_1| - 2 \le 2(n/2 + 1) - 2 = n$. 
  Conjecture~\ref{conj:sumner} then implies that 
  there is an embedding $\phi$ of $D_1$ into $T$.
  Note that this handles the case when $c = 1$.

  Let $T_2 := T - \phi(V(D_1))$.
  Since $m_1 \ge 3$ we have that 
  $m_1 \ge (m_1+1)/2 + 1$.
  Therefore,
  $\|D_2\| \le n/2 + c - 1 - m_1 \le (n - m_1 - 1)/2 + (c - 1) - 1$.
  Since $|T_2| = n - m_1 - 1$,
  there is an embedding of $D_2$ into $T_2$ by induction.
%
\end{proof}

So if Sumner's conjecture is true, then universal $s$-cliques are those whose light edges induce a forest with $c$ non-trivial components and at most $s/2+c-1$ edges.  In light of this, we conjecture the following.
\begin{conj}
  \label{conj:large_aclique_tiling}
  For every $s \ge 4$ and $k \in \N$,
  if $M$ is a standard multigraph on
  $sk$ vertices with $\delta(M) \ge 2(s-1)k - 1$,
  then $M$
  can be tiled with $k$ disjoint universal $s$-cliques.
\end{conj}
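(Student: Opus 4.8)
The plan is to prove Conjecture~\ref{conj:large_aclique_tiling} asymptotically, i.e.\ to establish a version with an error term: for every $s\ge 4$ and every $\epsilon>0$ there is $k_0$ such that if $M$ is a standard multigraph on $sk$ vertices with $k\ge k_0$ and $\delta(M)\ge 2(s-1)k-1$, then $M$ can be tiled with $k$ disjoint universal $s$-cliques. The natural framework is the absorbing method combined with a regularity-type (or random-greedy) argument, following the standard template for degree-condition tiling problems. First I would translate the degree condition into a statement about the underlying graph $G(M)$: since $\Delta(\overline M)\le 2k-1$, complementing yields a standard multigraph $\overline M$ whose underlying graph has bounded maximum degree, and in particular the light graph $L(\overline M)$ and heavy graph $H(\overline M)$ are sparse. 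Equivalently, in $M$ itself, every vertex is missing at most $O(k)$ of the $2sk$ possible edge-slots, so $M$ is ``$2$-dense'' apart from a $1/s$-fraction; I would phrase all subsequent arguments in terms of this density deficiency.

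The second step is to fix one convenient universal $s$-clique $Q$ to tile with — by Theorem~\ref{thm:tourn_ham_paths} any $s$-clique whose light edges form a single Hamilton path (or a sparse forest) works, and a fixed such $Q$ on $s$ vertices is easiest. Then I would build an \emph{absorbing structure}: a small family $\mathcal{A}$ of vertex-disjoint ``absorbers'' whose union has size $o(k)$ but linearly many of them, such that for every vertex $v$ of $M$ there are $\Omega(|\mathcal{A}|)$ absorbers that can ``swallow'' $v$, meaning the absorber together with $v$ can be repartitioned into one more copy of $Q$ than before. The existence of many local absorbers for each $v$ follows from the density of $M$: because $v$ has $\ge 2(s-1)k-1$ total degree, $v$ lies in many copies of $Q$ and, more importantly, for any fixed set of $s-1$ ``spare'' vertices one can typically complete $\{v\}\cup(\text{spare set})$ in two ways. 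A standard probabilistic selection (choose absorbers at random, apply concentration, delete the few bad overlaps) then produces $\mathcal{A}$. Set aside $\mathcal{A}$ and note the leftover $M'$ still satisfies essentially the same degree bound after removing $o(k)$ vertices.

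The third step is the \emph{almost-perfect tiling}: cover all but $o(k)$ of the remaining vertices by disjoint copies of $Q$. Here I would use a fractional-relaxation or a greedy/nibble argument: since $M'$ has the ``$(2-2/s)$-density'' property, a random greedy process that repeatedly pulls out copies of $Q$ succeeds in covering a $(1-o(1))$-fraction of the vertices — one shows that at each stage, as long as a linear number of vertices remain, the ``link'' of a typical $(s-1)$-set still contains a suitable vertex to complete a copy of $Q$, because the accumulated deficiency stays $O(k)$. Alternatively, one can invoke a weighted hypergraph matching theorem (Pippenger–Spencer / Frankl–Rödl) on the $s$-uniform hypergraph whose edges are the vertex-sets spanning a copy of $Q$ in $M$; the degree condition guarantees this hypergraph is nearly regular with negligible codegrees, yielding an almost-perfect matching. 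Finally, fold in the absorbers: the uncovered $o(k)$ vertices, together with $\mathcal{A}$, get absorbed one at a time using the absorbing property, completing the full $Q$-factor.

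I expect the main obstacle to be the absorber construction — specifically, proving that \emph{every} vertex $v$ has many absorbers, uniformly, despite the relatively weak and purely total-degree condition $\delta(M)\ge 2(s-1)k-1$ (which allows a vertex to have, say, all its edges be light in one direction, as in the extremal example $\vv K_{sk}-E^+(X,Y)$-type constructions). One must check that even such ``unbalanced'' vertices still sit in enough copies of the chosen universal clique $Q$ and admit the swap operation; this is where the universality of $Q$ and Theorem~\ref{thm:tourn_ham_paths} are genuinely used, since an orientation-robust clique is exactly what lets us complete partial copies regardless of how the light edges happen to be oriented near $v$. A secondary difficulty is bookkeeping the deficiency across the greedy stage: one needs that removing copies of $Q$ does not concentrate the missing edges onto a small set of vertices, which should follow from a second-moment/concentration argument but requires care. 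Everything else — the probabilistic selection, the nibble, and the final absorption — is by now routine once these two points are in place.
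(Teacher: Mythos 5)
First, a point of framing: the statement you set out to prove is a \emph{conjecture} in this paper; the authors do not prove it, they only support it with an asymptotic result (Theorem~\ref{thmS4} and Corollary~\ref{thmS4cor}), in which the degree hypothesis is weakened to $\delta(M)\ge 2(1-1/s)n+\varepsilon n$ and, for the universality of the tiles, $s$ must be large (since Conjecture~\ref{conj:sumner} is only known for large tournaments). Your sketch follows essentially the same template as the paper's Section~4 (absorbers plus an almost-perfect tiling), but it overclaims: you assert the \emph{exact} threshold $2(s-1)k-1$ for large $k$, and nothing in your outline supplies what that would require. The absorbing machinery you invoke (and the paper's Lemma~\ref{lem:absorbing}) needs a uniform lower bound of the form $|f(S)|\ge\varphi n^{d}$ on the number of absorbers for \emph{every} $s$-set $S$, and in the paper this is exactly where the $\varepsilon n$ slack is consumed (Lemma~\ref{lem:sponges}). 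At the exact threshold, near-extremal configurations (e.g.\ variants of $K^2_{sk}$ minus the edges inside a set of about $k$ vertices) make such uniform counts fail, and the standard remedy is a separate stability/extremal-case analysis, which you dismiss as routine but never describe. This is the central missing idea; without it your argument proves at best a statement of the same strength as Theorem~\ref{thmS4}, not the conjecture.

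Second, your choice of tile is problematic. In a standard multigraph the light/heavy pattern inside any $s$-set is \emph{determined} by $M$; you cannot ``tile with copies of a fixed $Q$'' whose light edges form a Hamilton path, you can only ask that the induced light graph of each tile be sparse enough to be universal --- which is precisely why the paper works with the flexible classes of \emph{fit} and \emph{near-matching} cliques rather than a single pattern. Moreover an $s$-clique whose light edges form a Hamilton path is not universal when $s\in\{5,7\}$, by the antidirected-path exception in Theorem~\ref{thm:tourn_ham_paths}, and for general sparse light forests universality rests on Sumner's conjecture, currently known only for large $s$ --- so even the asymptotic statement you could salvage would need either $s\ge s_0$ or a conditional formulation, as in Corollary~\ref{thmS4cor}. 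A smaller methodological difference: the paper's almost-tiling step (Theorem~\ref{thm:almost_tiling}) is an elementary extremal argument at the exact threshold leaving only $O_s(1)$ uncovered vertices, whereas your nibble/Pippenger--Spencer route requires near-regularity and codegree control of the hypergraph of universal cliques, which the purely total-degree hypothesis does not obviously provide; that step too needs justification rather than citation.
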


In the following section we support Conjecture~\ref{conj:large_aclique_tiling} with  Theorems~\ref{thmS4}~and~\ref{thm:almost_tiling}.
Combining Theorem~\ref{cor:main}, 
Conjectures \ref{conj:tri_strong}, 
\ref{conj:sumner} and
\ref{conj:large_aclique_tiling} 
with Proposition~\ref{prop:sumner} we have the following.
\begin{conj}
  For any $s,k \in \N$,
  if $D$ is a strongly $2$-connected digraph on $sk$ vertices and 
  $\delta(D) \ge 2(s-1)k - 1$, then $D$ 
  contains any combination of $k$ disjoint tournaments on $s$ vertices.
\end{conj}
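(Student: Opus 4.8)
The plan is to derive this conjecture from the statements already collected in this section, splitting on the value of $s$; no genuinely new combinatorial input is required. Fix a multiset of tournaments $T_1,\dots,T_k$, each on $s$ vertices --- this is what ``any combination of $k$ disjoint tournaments on $s$ vertices'' asks us to realize --- and recall $\delta(D)\ge 2(s-1)k-1$. The goal is a factor of $D$ whose $i$-th tile is a copy of $T_i$.

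Consider first $s=3$. Up to isomorphism the only tournaments on three vertices are $\vv C_3$ and $\vv T_3$, so the prescribed list is recorded by the number $c$ of cyclic tiles and $t:=k-c$ of transitive tiles, and $\delta(D)\ge 2(s-1)k-1=4k-1$, which is exactly the threshold appearing in Theorem~\ref{cor:main} and in Conjecture~\ref{conj:tri_strong}. If $t\ge 1$, then Theorem~\ref{cor:main} already produces a factor with precisely $c$ copies of $\vv C_3$ and $t$ copies of $\vv T_3$ (and does not even use connectivity). If $t=0$, i.e.\ every prescribed tile is $\vv C_3$, then Conjecture~\ref{conj:tri_strong} applies: strong $2$-connectivity is exactly the extra hypothesis that rules out the extremal constructions blocking a cyclic-triangle factor at this threshold (the one in Theorem~\ref{thm:Wang}, which fails to be strongly connected, and the strongly-connected-but-not-strongly-$2$-connected variant described after Conjecture~\ref{conj:tri_strong}), and it yields a $\vv C_3$-factor. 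This settles $s=3$.

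Now suppose $s\ge 4$; here the strong $2$-connectivity assumption is not used. Pass to the underlying standard multigraph $M:=M(D)$, which has $sk$ vertices and satisfies $\delta(M)=\delta(D)\ge 2(s-1)k-1$. By Conjecture~\ref{conj:large_aclique_tiling}, $M$ can be tiled by $k$ disjoint universal $s$-cliques $K_1,\dots,K_k$, with $M[V(K_i)]=K_i$. For each $i$ the induced sub-digraph $D[V(K_i)]$ is then one of the orientations of $K_i$, so by the definition of ``universal'' it contains a copy of $T_i$; the union of these $k$ vertex-disjoint copies is the required factor. The universality of the tiles is what Sumner's conjecture supplies: a proof of Conjecture~\ref{conj:large_aclique_tiling} would deliver cliques whose light-edge graphs are forests with $c$ non-trivial components and at most $s/2+c-1$ edges, and Proposition~\ref{prop:sumner} --- through Theorem~\ref{thm:tourn_ham_paths} and Conjecture~\ref{conj:sumner}, equivalently Conjecture~\ref{conj:sumner_forest} --- converts that condition into the assertion that every tournament on $s$ vertices contains every orientation of such a forest, which is precisely universality.

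The assembly above is routine; all of the content sits in the component conjectures. For $s\ge 4$ the main obstacle is Conjecture~\ref{conj:large_aclique_tiling} in full generality: Section~4 establishes only the case $s=4$ together with an asymptotic version, and pushing an absorption/regularity-type argument to every $s$ --- while simultaneously forcing the light edges of each tile into a forest of the allowed size and eliminating the error term --- is the crux. For $s=3$ the obstacle is Conjecture~\ref{conj:tri_strong}, i.e.\ proving that adjoining strong $2$-connectivity to the Czygrinow--Kierstead--Molla threshold removes the single missing cyclic-factor case. And pervading the $s\ge 4$ argument is Sumner's conjecture itself, needed (via Proposition~\ref{prop:sumner}) only to certify that a bounded-size forest of light edges forces universality.
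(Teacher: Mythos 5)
Your derivation is correct and is essentially the paper's own: the paper obtains this conjecture precisely by combining Theorem~\ref{cor:main} and Conjecture~\ref{conj:tri_strong} for the $s=3$ cases with Conjecture~\ref{conj:large_aclique_tiling} (whose universal tiles, characterized via Conjecture~\ref{conj:sumner} and Proposition~\ref{prop:sumner}, absorb any prescribed tournaments) for $s\ge 4$, exactly as you assemble it. Only your side remark about Section~4 is slightly off --- it proves an asymptotic version of Conjecture~\ref{conj:large_aclique_tiling} for all $s$ (plus an almost-tiling result), not the exact case $s=4$ --- but this does not affect the derivation.
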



\section{An Asymptotic Result}

Let $K$ be a full clique on at most $s$ vertices. 
It is \emph{fit} if $\ledges K\le\max\{0,|K|-s/2\}$.
It is a \emph{near matching} if 
either $\ldeg{v}{K} \le 1$ for every vertex $v \in K$;
or $|K| = s$, $\ldeg{v}{K} \le 2$ for every vertex $v \in K$ 
and $\ldeg{v}{K} = 2$ for at most one vertex $v \in K$.
It is \emph{acceptable} if it is 
fit or a near matching.

In this section we prove the following theorem.
\begin{thm}\label{thmS4}
  For all $s \in \N$ and $\varepsilon > 0$
  there exists $n_0$ such that if
  $M$ is a standard multigraph on $n \ge n_0$ vertices, 
  where $n$ is divisible by $s$, then the following holds.
  If $\delta(M) \ge 2(1-1/s)n + \varepsilon n$, then
  there exists a perfect tiling of $M$ with acceptable $s$-cliques.
\end{thm}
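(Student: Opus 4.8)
The plan is to prove Theorem~\ref{thmS4} by the absorbing method, which is the standard route for asymptotic tiling results of this type. Fix $s$ and $\varepsilon$, and introduce a hierarchy of constants $1/n_0 \ll \gamma \ll \eta \ll \varepsilon$. The strategy splits into three pieces: (i) build a small \emph{absorbing set} $\mathcal{A}$ of vertices that can flexibly swallow any tiny leftover remainder into a collection of acceptable $s$-cliques; (ii) show that after removing $\mathcal{A}$ the remaining multigraph can be \emph{almost-perfectly tiled} by acceptable $s$-cliques, leaving only $o(n)$ (in fact at most $\gamma n$) uncovered vertices; (iii) absorb those uncovered vertices using $\mathcal{A}$. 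The divisibility of $n$ by $s$ ensures the counts match up at the end.

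First I would set up the absorbing structure. For a set $T$ of $s$ vertices, call an $(s)$-set $L$ (disjoint from $T$, of size divisible by $s$, say $|L| = s\cdot\lceil s/2\rceil$ or so) an \emph{absorber for $T$} if both $M[L]$ and $M[L\cup T]$ admit perfect tilings into acceptable $s$-cliques. The key counting lemma is that, because $\delta(M) \ge 2(1-1/s)n + \varepsilon n$, every $s$-set $T$ has at least $\eta n^{|L|}$ absorbers: one picks the vertices of $L$ greedily, using the semi-degree surplus to guarantee that at each step the common ``heavy-or-light-compatible'' neighborhood of the already chosen vertices together with $T$ is still linear in size, and one arranges the underlying graph $G(M[L\cup T])$ to be complete with few light edges (so that each tile is fit), appealing to the fact that a random-like choice keeps the light-edge count below the $|K|-s/2$ threshold. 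Then a standard probabilistic argument (choose each candidate $L$-set independently with small probability, delete overlaps and ``bad'' sets) produces a family $\mathcal{F}$ of pairwise-disjoint absorbers with $|\mathcal{F}| \le \gamma n$ such that every $s$-set $T$ in $M$ has at least $\gamma n / 2$ absorbers in $\mathcal{F}$; let $\mathcal{A} = \bigcup \mathcal{F}$ be the absorbing set, and perfectly tile $M[\mathcal{A}]$ into acceptable cliques to start.

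Next, for the almost-perfect tiling of $M' := M - \mathcal{A}$: since $|\mathcal{A}| \le \gamma |L| \cdot n$ is $o(n)$, $M'$ still satisfies $\delta(M') \ge 2(1-1/s)n + (\varepsilon/2) n \ge 2(1-1/s)|M'| + (\varepsilon/2)|M'|$. I would then greedily pull out acceptable $s$-cliques one at a time: as long as at least $\gamma n$ vertices remain uncovered, the restriction $M''$ of $M'$ to the uncovered set has minimum semi-degree at least $2(1-1/s)|M''| + (\varepsilon/4)|M''|$ (using $|M''| \ge \gamma n$ and the fact that deleting vertices drops each semi-degree by at most the number deleted), and such a multigraph on $\ge s$ vertices contains an acceptable $s$-clique — indeed it contains a \emph{fit} one, found by repeatedly choosing a vertex and intersecting neighborhoods, exactly as in the absorber construction but just for $s$ vertices. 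Removing that clique decreases the uncovered count by $s$, so after finitely many steps at most $\gamma n$ vertices remain, and (by divisibility) this number is a multiple of $s$.

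Finally, absorb: let $R$ be the set of $\le \gamma n$ leftover vertices, $|R| = s m$ for some $m \le \gamma n / s$. Partition $R$ arbitrarily into $s$-sets $T_1, \dots, T_m$; process them one by one, and for each $T_i$ pick an unused absorber $L \in \mathcal{F}$ for $T_i$ (one exists because $\mathcal{F}$ contains $\ge \gamma n/2 > m$ absorbers for each $s$-set and we have used fewer than $m$ of them so far), replace the tiling of $M[L]$ inside our current partial tiling by the tiling of $M[L \cup T_i]$ — both are perfect tilings into acceptable $s$-cliques by definition of absorber. After all $m$ steps every vertex of $M$ lies in an acceptable $s$-clique, giving the desired perfect tiling.

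The main obstacle I expect is the absorber-existence and counting lemma in step (i): one must simultaneously guarantee that the underlying graph $G(M[L\cup T])$ is complete (so the cliques are \emph{full}) \emph{and} control the number of light edges within each of the $s$-vertex tiles so every tile is fit (or, failing that, arrange a near-matching structure), all while keeping a linear number of choices at each stage. The semi-degree hypothesis $\delta(M) \ge 2(1-1/s)n + \varepsilon n$ is exactly what forces the underlying graph to have common neighborhoods of size $\ge \varepsilon n - O(s)$ for any $s$ chosen vertices, so completeness is not hard; the delicate point is the light-edge budget $|K| - s/2$, which requires choosing the $L$-vertices so that heavy edges predominate among the chosen set — achievable by a counting/averaging argument showing that only $o(n)$ vertices can be ``light-heavy'' toward a fixed bounded set, but this is where the bookkeeping is heaviest.
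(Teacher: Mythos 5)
The overall architecture you propose (absorbers, almost-perfect tiling, then absorption) matches the paper, and your absorbing structure is essentially the paper's notion of an $S$-sponge. But step (ii) contains a genuine gap: the claim that, as long as at least $\gamma n$ vertices remain uncovered, the uncovered multigraph $M''$ still satisfies $\delta(M'') \ge 2(1-1/s)|M''| + (\varepsilon/4)|M''|$ is false. Deleting $n-m$ vertices can cost each remaining vertex up to $2(n-m)$ in degree, so the best you can say is
\begin{equation*}
  \delta(M'') \;\ge\; 2\Bigl(1-\tfrac 1s\Bigr)n + \varepsilon n - 2(n-m)
  \;=\; 2\Bigl(1-\tfrac 1s\Bigr)m + \varepsilon n - \tfrac 2s (n-m),
\end{equation*}
and the slack term $\varepsilon n - \tfrac 2s(n-m)$ becomes negative as soon as roughly $\tfrac{s\varepsilon}{2}n$ vertices have been covered. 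Since the theorem must hold for every $\varepsilon>0$, in particular $\varepsilon \ll 1/s$, your greedy extraction stalls after covering only an $O(s\varepsilon)$ fraction of the graph, leaving almost all of $M'$ uncovered --- far more than the $\gamma n$ leftover your absorbing family can handle. This is exactly why the paper does not use a greedy argument here: its Theorem~\ref{thm:almost_tiling} shows, under the much weaker hypothesis $\delta(M)\ge 2(1-1/s)n-1$, that a lexicographically maximal collection of fit cliques of sizes up to $s$ leaves only a \emph{constant} number ($\le s(s-1)(2s-1)/3$) of vertices uncovered, the key tool being the improvement/swapping Lemma~\ref{lem:improvement_lemma}. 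That extremal augmentation argument is the technical heart of the proof and is missing from your proposal; without it (or some substitute valid for small $\varepsilon$) the scheme does not close.

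A secondary, smaller point: your absorber-counting sketch tries to control the total light-edge budget (fitness) during a greedy vertex-by-vertex construction, which you rightly flag as delicate. The paper sidesteps this by building sponges out of \emph{near matchings} (Lemma~\ref{lem:sponges}), where the constraint is local (light degree at most $1$, with one exceptional vertex allowed $2$), which is what makes the greedy extension argument and the maximization of $\sum_{T\in A}\|M[\im(T)]\|$ go through cleanly; this is a fixable difference of execution, unlike the almost-tiling step.
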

If $K$ is acceptable and $|K| \ge 4$
then $L(K)$ is a forest with at most $|K|/2 + c - 1$ edges
where $c$ is the number of components of $L(K)$.
Therefore,
with Proposition~\ref{prop:sumner} and the
fact that Conjecture~\ref{conj:sumner} is true for large trees 
\cite{KMOsumner11}, we have the following corollary. 
\begin{cor}\label{thmS4cor}
  There exists $s_0$ such that 
  for any $s \ge s_0$ and any $\varepsilon > 0$
  there exists $n_0$ such that if
  $D$ is a directed graph on $n \ge n_0$ vertices, 
  where $n$ is divisible by $s$, the following holds.
  If $\delta(D) \ge 2(1-1/s)n + \varepsilon n$, then
  $D$ can be partitioned into tiles of order $s$ such that
  each tile contains every tournament on $s$ vertices.
\end{cor}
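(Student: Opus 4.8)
The corollary is a quick consequence of Theorem~\ref{thmS4}, so the real content is to prove the latter; I sketch the reduction first. Given a directed graph $D$ as in the corollary, note that $\delta(M(D)) = \delta(D)$, so Theorem~\ref{thmS4} produces a perfect tiling of $M(D)$ into acceptable $s$-cliques. If $K$ is such a tile then $s = |K| \ge s_0 \ge 4$ and $L(K)$ is a forest with at most $s/2 + c - 1$ edges, where $c$ is the number of its non-trivial components; since $s$ is large, Proposition~\ref{prop:sumner} together with the resolution of Sumner's conjecture for large trees \cite{KMOsumner11} gives that every tournament on $s$ vertices contains every orientation of $L(K)$, which by the characterization of universality recalled above means that $K$ is universal, i.e.\ the tile on $V(K)$ contains every tournament on $s$ vertices. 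So it remains to prove Theorem~\ref{thmS4}.

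For that I would first pass to graph language. Write $G := G(M)$, $H := H(M)$, $L := L(M)$, and let $\overline{G}$ be the graph of non-edges of $M$. From $d_M(v) = 2 d_H(v) + d_L(v)$ and $d_H(v) + d_L(v) + d_{\overline{G}}(v) = n - 1$ one gets $d_H(v) \ge d_M(v) - (n-1) \ge (1 - 2/s)n + \varepsilon n$ for every $v$, whence $\delta(H) \ge (1-2/s)n + \varepsilon n$ and $\Delta(L) \le n - 1 - \delta(H) \le (2/s - \varepsilon)n$; and $d_G(v) \ge \frac12 d_M(v)$ gives $\delta(G) \ge (1-1/s)n + \frac{\varepsilon}{2} n$ and $\Delta(\overline{G}) \le (1/s - \frac{\varepsilon}{2})n$. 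The key observation is that it suffices to partition $V$ into $s$-sets $V_1, \dots, V_{n/s}$ so that each $V_i$ induces a clique of $G$ and $L[V_i]$ has maximum degree at most $1$: then $G(M)[V_i] = K_s$ and $L(M)[V_i]$ is a matching, so $M[V_i]$ is a full $s$-clique that is a near matching, hence acceptable. So this is the goal.

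I would reach it by the absorbing method. Fix $0 < \beta \ll \varepsilon$ (depending also on $s$). The plan has two parts: (i) an \emph{absorbing set} $A \subseteq V$ with $s \mid |A|$, $|A| \le \beta n$, such that $M[A \cup W]$ has a perfect tiling into acceptable $s$-cliques for every $W \subseteq V \setminus A$ with $s \mid |W|$ and $|W| \le \beta^2 n$; and (ii) an \emph{almost-tiling} of $M - A$ by disjoint acceptable $s$-cliques missing at most $\beta^2 n$ vertices. Then reserving $A$, applying (ii) to $M - A$, and absorbing the uncovered set $W$ (which satisfies $s \mid |W| \le \beta^2 n$) into $A$ via (i) finishes. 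For (ii) the natural tool is the regularity lemma: apply it to the $3$-colouring (heavy / light / non-edge) of the complete graph on $V$ with a number of clusters divisible by $s$; the heavy-dense reduced graph has minimum degree at least $(1 - 2/s - o(1))$ times its order, which clears the K\"uhn--Osthus threshold for a perfect tiling by $K_{s/2}(2)$ (for $s$ even) or by $K_{(s-1)/2}(2) \vee K_1$ (for $s$ odd), each of critical chromatic number $s/2$; blowing this up cluster by cluster — using that only $\lfloor s/2 \rfloor$ cluster-pairs per gadget fail to be heavy-dense and that $\overline{G}$ is globally sparse — yields acceptable $s$-cliques leaving only $o(n)$ vertices. (This is essentially Theorem~\ref{thm:almost_tiling}.) For (i) one checks the hypotheses of a standard absorbing lemma: every vertex lies in many acceptable $s$-cliques and, more, has $\Omega(n^{c})$ (with $c = c(s) > 0$) bounded-size \emph{absorbers} — local configurations that split into acceptable $s$-cliques equally well with or without a designated vertex included — because almost every bounded set of vertices lies in the common heavy-neighbourhood of that vertex, where acceptable cliques are built freely; a random subset of $V$ of size $\beta n$ then meets enough absorbers of each vertex, and a short parity argument makes the intermediate pieces $s$-divisible.

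The main obstacle is keeping everything \emph{acceptable} through the absorption. Inserting an arbitrary leftover vertex $v$ into a near-complete $s$-set can create up to $s - 1$ light edges at $v$ and wreck the near-matching condition, so each absorber must be engineered so that once $v$ is inserted it is heavy-adjacent to all but at most one vertex of its tile; and such absorbers must be available for \emph{every} $v$ simultaneously while $|A|$ stays sublinear. This is precisely where the hypothesis $\delta(M) \ge 2(1-1/s)n + \varepsilon n$ is used, rather than the bare threshold $2(1-1/s)n$. A secondary nuisance is the parity of $s$: when $s$ is odd the natural half-clique size $(s-1)/2$ need not divide $n$, which is why one tiles with $K_{(s-1)/2}(2) \vee K_1$ above, and wherever near matchings become inconvenient one falls back on the more generous ``fit'' option — at most $s/2$ arbitrary light edges — that the definition of acceptable was evidently chosen to permit.
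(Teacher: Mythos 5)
Your first paragraph is exactly the paper's proof of the corollary: the paper, too, obtains it by feeding the tiles of Theorem~\ref{thmS4} through the observation that an acceptable $s$-clique has $L(K)$ a forest with at most $s/2+c-1$ edges, and then invoking Proposition~\ref{prop:sumner} together with the truth of Conjecture~\ref{conj:sumner} for large tournaments \cite{KMOsumner11} to conclude universality for $s\ge s_0$. That reduction is correct (and, since Theorem~\ref{thmS4} is a standing result of the paper, it already suffices for the corollary).

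Where you diverge is in re-proving Theorem~\ref{thmS4}, and there your sketch has genuine gaps. The paper's proof is regularity-free: the almost-tiling is Theorem~\ref{thm:almost_tiling}, an elementary extremal argument built on Lemma~\ref{lem:improvement_lemma} (a maximal family of fit cliques, improved by degree counting), and the absorption is Lemma~\ref{lem:absorbing} applied to ``sponges'' whose existence comes from Lemma~\ref{lem:sponges} (for any two vertices there are $\Omega(n^{s-1})$ tuples completing \emph{both} to near-matching $s$-cliques). In your regularity route, the blow-up step is the concrete problem: a copy of $K_{s/2}(2)$ in the heavy reduced graph forces two vertices of each tile to come from the same cluster, and regularity says nothing about pairs inside a cluster. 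The bound $\Delta(\overline G)\le(1/s-\varepsilon/2)n$ does not rescue this, because cluster sizes are $n/k\ll n/s$, so an entire cluster may induce no edges of $G$ at all; then the chosen $s$-set is not even a clique of $G(M)$, let alone fit or a near matching. Some repair (e.g.\ drawing the doubled part from two distinct, mutually heavy-dense clusters, which changes the gadget and the threshold computation) is needed and not supplied. Likewise, the absorbers that ``keep everything acceptable'' are asserted rather than constructed; this is precisely the content of Lemma~\ref{lem:sponges}, and without an explicit analogue your absorption step is incomplete. So: as a proof of the corollary via the paper's Theorem~\ref{thmS4}, your argument matches the paper and is fine; as a self-contained alternative proof of Theorem~\ref{thmS4}, it is not yet a proof.
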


First we show with Theorem~\ref{thm:almost_tiling} that for fixed $s$ we can
tile all but at most a constant number of vertices of $M$ with universal $s$-cliques. 

The following is a key step in the proof.

\begin{lem}
  \label{lem:improvement_lemma}
  Let $1\leq t\leq s-1$ and suppose $M$ is a standard multigraph. If $X_{1}$ and
  $X_{2}$ are fit $t$-cliques, $Y$ is a fit $s$-clique, and
  $\mdeg{X_{i}}Y\ge2(s-1)t+2-i$ for $i\in[2]$, then $M[X_{1}\cup X_{2}\cup Y]$
  contains two disjoint fit cliques with orders $t+1$ and $s$ respectively.  \end{lem}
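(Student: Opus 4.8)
\textbf{Proof plan for Lemma~\ref{lem:improvement_lemma}.}
The goal is to find a vertex $y\in Y$ such that $X_1+y$ is a fit $(t+1)$-clique and $Y-y$ remains a fit $s$-clique; then $X_2$ (possibly unchanged, or with an added vertex from the leftover of $Y$—but since we want orders exactly $t+1$ and $s$, actually we keep $X_2$ as an auxiliary and the two output cliques are $X_1+y$ and something of order $s$) — let me restate: the plan is to build one clique of order $t+1$ from $X_1$ together with a well-chosen vertex of $Y$, and one clique of order $s$ from the remaining $s-1$ vertices of $Y$ together with one vertex of $X_2$. So I need to pick $y\in Y$ and $x\in X_2$ with: (i) $M[X_1+y]$ is a full clique, i.e. every $x'\in X_1$ satisfies $\|x',y\|\ge 1$ in $M$ (recall in a \emph{full} clique we need $G(M)$ complete, heavy edges are allowed), and the lightness bound $\ledges{X_1+y}\le\max\{0,(t+1)-s/2\}$; (ii) $M[(Y-y)+x]$ is a full clique with $\ledges{(Y-y)+x}\le\max\{0,s-s/2\}=\lceil s/2\rceil$ — wait, $\max\{0,|K|-s/2\}$ with $|K|=s$ gives $s/2$, so at most $\lfloor s/2\rfloor$ light edges.

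\textbf{Step 1: Counting full-clique extensions.} First I would show many $y\in Y$ have $\|x',y\|\ge1$ for all $x'\in X_1$ simultaneously. The hypothesis $\mdeg{X_1}{Y}\ge 2(s-1)t+1$ means the bipartite multigraph between $X_1$ (size $t$) and $Y$ (size $s$) has at least $2(s-1)t+1$ edge-endpoints; the maximum possible is $2ts$, so the "deficiency" is at most $2ts-(2(s-1)t+1)=2t-1$. A vertex $y\in Y$ fails $X_1+y$ being a full clique only if some $x'\in X_1$ has $\|x',y\|=0$, which costs $2$ from the deficiency budget; hence at most $\lfloor(2t-1)/2\rfloor=t-1$ vertices of $Y$ are "bad" for $X_1$. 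Similarly, using $\mdeg{X_2}{Y}\ge 2(s-1)t$, deficiency $\le 2t$, so at most $t$ vertices $y$ have some $x\in X_2$ nonadjacent... actually I need the dual statement: for a fixed $y$, how many $x\in X_2$ are nonadjacent to it, and whether some $x$ works for the clique $(Y-y)+x$. Let me instead count: the number of pairs $(x,y)\in X_2\times Y$ with $\|x,y\|=0$ is at most $t$ (each costs $2$ from budget $2t$).

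\textbf{Step 2: Handling the light-edge (fitness) constraints.} This is the main obstacle. Being a full clique is cheap; staying \emph{fit} is the real constraint, especially when $t+1-s/2>0$ (i.e.\ $t$ close to $s$) where $X_1+y$ can afford only $t+1-\lceil s/2\rceil$ light edges, or when $t+1\le s/2$ where $X_1+y$ must have \emph{no} light edges at all, forcing $y$ to be heavy-adjacent to every vertex of $X_1$. I would split into cases on the size of $t$ relative to $s/2$. When $t+1\le s/2$: I need $y$ heavy-adjacent to all of $X_1$; the number of non-heavy endpoints in $E(X_1,Y)$ is $2ts-2\mdeg{X_1}{Y}^h$, but I only control total degree, so I'd use $\mdeg{X_1}{Y}\ge 2(s-1)t+1$ together with $\ledges{X_1}=0$ (since $X_1$ is fit with $t\le s/2-1<s/2$) — hmm, but $Y$ may have up to $\lfloor s/2\rfloor$ light edges internally and light edges to $X_1$; I'd bound the count of $y$ that are light- or non-adjacent to some $x'\in X_1$, show it is $<s$, and among the survivors further delete the $\le t$ that break $X_2$'s clique, and also ensure removing $y$ doesn't raise the light-edge count of $Y-y$ above threshold (removing a vertex only removes light edges, so $Y-y$ stays fit automatically when $Y$ is fit — good, that direction is free). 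When $t+1>s/2$: the fitness budgets are generous; here the dominant worry is just full-clique-ness, handled in Step~1, plus checking $\ledges{X_1+y}\le t+1-s/2$, which follows from $\ledges{X_1}\le t-s/2$ (fitness of $X_1$, valid in this range) plus at most... the light edges from $y$ to $X_1$, which I bound using the degree surplus.

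\textbf{Step 3: Assembling the choice.} After Steps 1--2 I will have shown that the set of $y\in Y$ which are simultaneously (a) good for making $X_1+y$ a fit $(t+1)$-clique and (b) such that some $x\in X_2$ makes $(Y-y)+x$ a fit $s$-clique, is nonempty — by showing the number of $y$ failing (a) plus the number failing (b) is strictly less than $s=|Y|$. Then pick such a $y$ and corresponding $x$; output the two cliques $X_1+y$ and $(Y-y)+x$, which are disjoint and have the required orders $t+1$ and $s$. The place I expect to need the most care is the bookkeeping in Step~2 verifying $\ledges{(Y-y)+x}\le\lfloor s/2\rfloor$: adding $x$ to $Y-y$ can introduce up to $t$... no, up to $|Y-y|=s-1$ new light edges (from $x$ to $Y-y$), so I cannot afford an arbitrary $x$; I must choose $x\in X_2$ with few light edges to $Y-y$, using that $X_2$ is fit (few internal light edges) and the $X_2$–$Y$ degree bound to guarantee such an $x$ exists, then intersect this with the "$x$ adjacent to all of $Y-y$" condition. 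Balancing these simultaneous requirements on $x$ and $y$ via a union bound over the $\le 2t$-sized deficiency budgets, and checking the arithmetic closes (i.e.\ the bad sets have total size $<s$) for all $t$ in the range $1\le t\le s-1$, is the crux; I'd expect to verify the tight case is around $t=s-1$ or the threshold $t\approx s/2$.
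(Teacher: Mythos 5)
There is a genuine gap, and it is structural rather than arithmetic: you commit in advance to producing the $(t+1)$-clique as $X_1+y$ for some $y\in Y$ and the $s$-clique as $(Y-y)+x$ for a single $x\in X_2$, i.e.\ the $s$-clique always consists of $s-1$ vertices of $Y$ plus one outside vertex. Under the stated hypotheses this shape is not always available, so no amount of bookkeeping in your Steps 1--3 can close the argument. Concretely, take $s=6$, $t=2$; let $X_1=\{a_1,a_2\}$ and $X_2=\{b_1,b_2\}$ each be a heavy edge, and let $Y=\{y_1,\dots,y_6\}$ be complete with exactly the three light edges $y_1y_2,y_1y_3,y_2y_3$. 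Between $X_1$ and $Y$ make every pair heavy except that $a_1y_1,a_1y_2,a_2y_3$ are light; between $X_2$ and $Y$ make every pair heavy except that $b_1y_1,b_2y_2,b_2y_3$ are light (edges between $X_1$ and $X_2$ may be absent). Then $X_1,X_2,Y$ are fit and $\|X_1,Y\|=21=2(s-1)t+1$, $\|X_2,Y\|=21\ge 2(s-1)t$, so all hypotheses hold. Since $t+1=s/2$, a fit $3$-clique may contain no light edge, so your $y$ must be heavy to both $a_1,a_2$, forcing $y\in\{y_4,y_5,y_6\}$; but then $Y-y$ still carries all three light edges of the triangle, and each $x\in X_2$ has at least one light edge into $Y-y$, so $\|(Y-y)+x\|^{l}\ge 4>s/2$ and $(Y-y)+x$ is never fit. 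The same computation blocks the variant with the roles of $X_1$ and $X_2$ exchanged. Yet the lemma does hold here: $\{b_1,b_2,y_4\}$ and $X_1\cup\{y_1,y_2,y_5,y_6\}$ are disjoint fit cliques of orders $3$ and $6$ --- note the $6$-clique contains \emph{all} of $X_1$ and only $s-t=4$ vertices of $Y$.

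This is exactly the configuration the paper's proof is organized around and that your template cannot reach. The paper first asks whether some $y\in Y$ that is heavy to all of $X_1$ or to all of $X_2$ is incident to a light edge inside $Y$; only in that case does deleting $y$ buy slack, and then a one-vertex swap of your type succeeds (in one of the two orientations --- which orientation is not yours to choose in advance, since in one subcase the $(t+1)$-clique is $X_2+y$ and the $s$-clique is $Y-y+x_1$ with $x_1\in X_1$). When no such $y$ exists, the paper abandons the ``replace one vertex of $Y$'' template entirely: it takes the $(t+1)$-clique to be $X_2+y_2$ and builds the $s$-clique as $X_1\cup Z_1$, where $Z_1$ consists of $s-t$ vertices of $Y$ that are heavy to $X_1$ or miss only one edge to $X_1$ and span no light edges among themselves. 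Your Step~2 observation that ``$Y-y$ stays fit automatically'' is correct but hides the binding constraint: $\|Y-y\|^{l}$ can still equal the full budget $s/2$, and then the added vertex must be heavy to all of $Y-y$ while simultaneously $y$ must be heavy to all of $X_1$ (forced whenever $t+1\le s/2$); the degree hypotheses, which are tight, do not guarantee both at once. So the missing ingredient is not sharper union-bound arithmetic but the alternative decomposition in which the $s$-clique absorbs all of $X_1$ together with $s-t$ carefully chosen vertices of $Y$.
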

  \begin{proof}
    Put $\eqy ic:=\{y\in Y:\mdeg{X_{i}}y=2t-c\}$
    and choose $x_1\in X_{1}$ with $\ldeg{x_1}{Y} \leq1$.

    Assume there exists 
    $y \in \eqy 10 \cup \eqy 20$ such that $\ldeg{y}{Y} \ge 1$.
    If $y \in \eqy 20$, then $Y - y + x_1$ and $X_2 + y$ are fit.
    If $y \in \eqy 10 \setminus \eqy 20$, then 
    $X_1 + y$ is fit and $\mdeg{X_2}{Y - y} \ge 2(s-1)t - (2t - 1) = 2(s-2)t + 1$
    so there exists $x_2 \in X$ such that $\ldeg{x_2}{Y - y} \le 1$
    and $Y - y + x_2$ is fit.

    So we can assume $\ldeg{\eqy 10 \cup \eqy 20}{Y} = 0$.
    Since 
    \[
      |\eqy i0|+(2t-1)s\geq2|\eqy i0|+|\eqy i1|+(2t-2)s\geq ||X_i, Y||\geq 2(s-1)t+2-i,
      \]
      we have 
      \begin{align}
	\mbox{(a)}\,\,|\eqy i0| & \ge s-2t+2-i\mbox{ ~~~and~~~ (b)}\,\,|\eqy i0|+\frac{1}{2}|\eqy i1|\ge s-t+1-\frac{i}{2}.\label{eq:y0}
      \end{align}
      By (\ref{eq:y0}), 
      if $t < s/2$ there exists $y_2 \in \eqy 20$
      and if $t \ge s/2$ there exists $y_2 \in \eqy 20 \cup \eqy 21$.
      Note that in either case $X_2 + y_2$ is fit.
      As $Y$ is full, $\alpha(L[\eqy 1 1])\geq\frac{1}{2}|\eqy 1 1|$. 
      So by (\ref{eq:y0}.b) there exists $I_{1}\subseteq \eqy 1 1$ such that
      $\ledges{I_1 \cup \eqy 1 0} = 0$ and $|I_{1} \cup Y_{1}^{0}| \ge s - t + 1$.
      Therefore we can select $Z_1 \subseteq I_{1} \cup Y_{1}^{0} - y_2$
      such that $|Z_1| = s - t$ and,
      by (\ref{eq:y0}.a) $|Z_1 \cap \eqy 1 0| \ge s - 2t$.
      $X_1 \cup Z_1$ is full and, 
      because $\ldeg{Z_1}{X_1} = |Z_1 \cap \eqy 1 1| \le \min\{t, s - t\}\leq s/2$,
      $X_1 \cup Z_1$ is fit.

    \end{proof}

    \begin{thm}
      \label{thm:almost_tiling}
      Let $s\geq 2$ and let $M$ be a standard multigraph on $n$ vertices.
      If $\delta(M) \ge 2(1-1/s)n - 1$, then
      there exists a disjoint collection of fit $s$-cliques
      that tile all but at most $s(s-1)(2s - 1)/3$ vertices of $M$.
    \end{thm}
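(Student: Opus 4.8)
The plan is to build the tiling greedily, one fit $s$-clique at a time, using Lemma~\ref{lem:improvement_lemma} as the engine that grows a partial clique up to order $s$. Concretely, suppose we have already removed some disjoint fit $s$-cliques and let $M'$ be the multigraph induced on the remaining vertices, with $n' = |M'|$. As long as $n'$ is larger than the claimed error term $s(s-1)(2s-1)/3$, I want to find one more fit $s$-clique inside $M'$. The starting point is the observation that a single vertex is a fit $1$-clique, and then I will repeatedly apply Lemma~\ref{lem:improvement_lemma} with $t = 1, 2, \dots, s-1$ to pass from a fit $t$-clique to a fit $(t+1)$-clique, finishing with a fit $s$-clique. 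The role of the two cliques $X_1, X_2$ in the lemma is that we need a little slack: we look for \emph{two} disjoint fit $t$-cliques each having large multigraph-degree into a common fit $s$-clique $Y$, and the lemma then produces a fit $(t+1)$-clique together with a replacement fit $s$-clique.

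The heart of the argument is a degree/counting bound showing that when $n'$ exceeds the error term, the required configuration exists. First, the degree condition in $M$ passes down to $M'$ with only a bounded loss: if we have removed $j$ cliques of order $s$, then for $v \in V(M')$, $\|v, V(M')\| \ge \delta(M) - 2s j \ge 2(1-1/s)n - 1 - 2(n - n') = 2(1-1/s)n' + \frac{2}{s}(n - n')\cdot 0 \cdots$ — more carefully, $\|v,V(M')\| \ge 2(1-1/s)n - 1 - (2(n-n')) $, and since we will be choosing the cliques so that $n - n'$ is a multiple of $s$ bounded appropriately, one checks $\|v, V(M')\| \ge 2(1-1/s)n' - 1$ still holds, so the same hypothesis is inherited (this is exactly the kind of bookkeeping that makes $s$-divisibility convenient). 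Now inside $M'$: to start the chain I need a fit $s$-clique at all, which I get by a Turán-type / greedy argument from $\delta(M') \ge 2(1-1/s)n' - 1$ — iteratively, any vertex set $S$ of size $<s$ that is full and fit has a common "full-and-light-budget-respecting" neighbor because the vertices outside with a bad (multiplicity $<2$ or light-overused) relation to $S$ number at most roughly $(s - |S|)\cdot(\text{something}) + 2|S|$, which is $< n'$. Then, to run the improvement step, I need for each $t$ two disjoint fit $t$-cliques $X_1, X_2$ and a fit $s$-clique $Y$ with $\|X_i, Y\| \ge 2(s-1)t + 2 - i$; the count of "bad" $s$-cliques or bad vertices blocking this is again linear in $s$ and constant-many, so it is dominated by $n'$ once $n' > s(s-1)(2s-1)/3$. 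The precise constant $s(s-1)(2s-1)/3 = \sum_{t=1}^{s-1} 2(s-1)t \cdot \frac{1}{\cdots}$ should fall out as $\sum_{t=0}^{s-1}$ of the per-step deficiency; I expect it arises as $\sum_{t=1}^{s-1} t(2t-1) \cdot \frac{s}{\cdots}$ or directly as the telescoped sum of the sizes of leftover "garbage" sets accumulated over the $s-1$ improvement steps plus one $s$-clique's worth.

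The main obstacle I anticipate is \textbf{making the greedy chain robust enough to always terminate with a fit $s$-clique when only boundedly many vertices remain}, i.e. controlling the interaction between the "fitness" constraint $\|K\|^l \le \max\{0, |K| - s/2\}$ and the degree hypothesis. A naive greedy choice can overspend the light-edge budget early (say, by absorbing vertices joined to the current partial clique only by light edges), leaving no fit completion; Lemma~\ref{lem:improvement_lemma} is designed precisely to dodge this — note how its proof juggles the sets $Y_i^0$ (multiplicity-$2$ into $X_i$) and $Y_i^1$ (light into $X_i$) to keep $|Z_1 \cap Y_1^1| \le s/2$. So the real work is: (i) verify the degree hypothesis of the lemma holds for suitably chosen $X_1, X_2, Y$ inside $M'$ by a counting argument, handling the edge case $t \ge s/2$ versus $t < s/2$ as the lemma's proof does; and (ii) pin down the bookkeeping that guarantees $n - n' \equiv 0 \pmod s$ throughout and that the inherited degree bound is exactly $2(1-1/s)n' - 1$, so the induction-like iteration closes. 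Everything else (the Turán-type seed clique, the final accounting of the constant) is routine once those two points are in place.
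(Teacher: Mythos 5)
Your plan has a genuine gap at its foundation: the minimum-degree hypothesis is \emph{not} inherited by the untiled remainder, and the arithmetic you wave at goes the wrong way. If you have removed $j$ disjoint $s$-cliques, then $n-n'=js$ and a surviving vertex may lose up to $2js$ from its degree, so all you can guarantee is $\delta(M')\ge 2(1-1/s)n-1-2js$, whereas the inherited hypothesis would require $2(1-1/s)n'-1=2(1-1/s)n-2(1-1/s)js-1$. The shortfall is $2js-2(1-1/s)js=2j$, which grows without bound as the greedy process proceeds; no constant-size error set absorbs it. So the ``remove a fit $s$-clique and recurse'' scheme collapses after the first few steps, and with it both your Tur\'an-type seed step inside $M'$ and your application of Lemma~\ref{lem:improvement_lemma} inside $M'$. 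Relatedly, your accounting for the constant $s(s-1)(2s-1)/3$ is only a guess; you never identify where it comes from, and your claim that it merely needs to dominate a ``linear in $s$'' count of bad objects is not the right mechanism.

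The paper avoids all of this by a one-shot extremal argument rather than a greedy recursion: take a collection $\mathcal{M}$ of disjoint fit cliques of \emph{all} orders at most $s$, chosen to maximize $(p_s,\dotsc,p_1)$ lexicographically, and suppose the union $U$ of the partial (order $<s$) cliques has more than $s(s-1)(2s-1)/3$ vertices. Maximality forces every partial clique to send few edges into $U$ (no vertex of one partial clique can extend another), so the degree condition \emph{in the original $M$} pushes almost all of each partial clique's degree into $W$, the union of the $s$-cliques already in $\mathcal{M}$. Since $s(s-1)(2s-1)/3=\sum_{t=1}^{s-1}t\cdot 2t$, some size class $t$ contains at least $2t+1$ partial cliques; averaging their edges over the $s$-cliques of $\mathcal{M}$ produces one $Y\in\mathcal{M}$ and two of these $t$-cliques $X_1,X_2$ with $\|X_i,Y\|\ge 2(s-1)t+2-i$, and Lemma~\ref{lem:improvement_lemma} then swaps vertices between $X_1\cup X_2$ and $Y$ to increase the lexicographic value, a contradiction. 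Note that in this use of the lemma $Y$ is an already-tiled $s$-clique, not a clique found among untiled vertices, which is exactly what lets the argument lean on the degree condition in $M$ rather than a degraded condition in $M'$. To repair your write-up you would have to abandon the inherited-hypothesis recursion and supply precisely this maximal-collection counting: the pigeonhole step giving $2t+1$ equal-size partial cliques (this is where the cubic constant comes from), the sparsity of $U$--$U$ edges from maximality, and the averaging over $W$ that yields the degree hypotheses of the improvement lemma.
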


    \begin{proof}
      Let $\mathcal{M}$ be a set of disjoint fit cliques in $M$, each having at most $s$ vertices.
      Let $p_i$ be the number of $i$-cliques in $\mathcal{M}$ and pick $\mathcal{M}$ so that
      $(p_{s}, \dotsc, p_1)$ is maximized lexicographically.
      Put $\mathcal{Y} := \{ Y \in \mathcal{M} : |Y| = s \}$
      and $\mathcal{X} = \mathcal{M} - \mathcal{Y}$.
      Set $U := \bigcup_{X \in \mathcal{X}} V(X)$,
      $W := \bigcup_{Y \in \mathcal{Y}} V(Y)$.
      Assume, for a contradiction, that $|U| > s(s-1)(2s - 1)/3$.  We claim that for all $X, X' \in \mathcal{X}$ with $|X| \le |X'|$, $\mdeg{X}{X'} \le 2\frac{s-2}{s-1}|X'||X|$. 

     If $X=X'$, then 
      $\mdeg{X}{X'}\le 2(|X|-1)|X| \le 2\frac{s-2}{s-1}|X|^2$. 
      
     If $X\neq X'$, then the maximality of $\mathcal{M}$ implies
      $x + X'$ is 
      not a fit $(|X'|+1)$-clique for any $x \in X$. Thus
      \begin{equation*}
	\mdeg{X}{X'} \le\begin{cases}
	  (2|X'| - 1)|X| = 2\frac{2|X'| - 1}{2|X'|}|X'||X| \le 2 \frac{s-2}{s-1}|X'||X| & \textrm{if } |X'| \le \frac{s-1}{2}; \\
	  (2|X'| - 2)|X| = 2\frac{|X'| - 1}{|X'|}|X'||X| \le 2 \frac{s-2}{s-1}|X'||X| & \textrm{if } \frac{s}{2} \le |X'| \le s-1.
	\end{cases}
      \end{equation*}
      Therefore by the claim, 
      \begin{equation*}
	\mdeg{X}{U} 
	\le 2\frac{s-2}{s-1}|U||X| =
	2 \frac{s-1}{s}|U||X| - \frac{2}{s(s-1)}|U||X|
	< 2\frac{s-1}{s}|U||X| - |X|.
      \end{equation*}
      By the degree condition,
      \begin{equation}
	\label{eq:deg_to_W}
	\mdeg{X}{W} > 2\frac{s-1}{s}|W||X|.
      \end{equation}
      Since 
      \begin{equation*}
	\sum_{t = 1}^{s-1} 2t^2 = \frac{s(s-1)(2s-1)}{3} < |U| = \sum_{t=1}^{s-1} tp_t,
      \end{equation*}
      there exists $t \in [s-1]$ with $p_t \ge 2t + 1$.  
      Choose $\mathcal{X}' \subseteq \mathcal{X}$ 
      such that $|\mathcal{X}'| = 2t + 1$
      and $|X|=t$ for every $X \in \mathcal{X}'$.
      Put $U' := \bigcup_{X \in \mathcal{X'}}V(X)$.

      By \eqref{eq:deg_to_W},
      there exists $Y \in \mathcal{Y}$ such that
      $\mdeg{U'}{Y} \ge 2\frac{s-1}{s}|U'||Y| + 1 = 
      2(s-1)t|\mathcal{X'}| + 1$.
      Let $X_1, \dotsc, X_{2t+1}$ be an ordering of $\mathcal{X}'$
      such that 
      $\mdeg{X_i}{Y} \ge \mdeg{X_{i+1}}{Y}$ for $i \in [2t]$.
      Clearly, $2(s-1)t + 2t \ge \mdeg{X_1}{Y} \ge 2(s-1)t + 1$,
      so 
      \begin{equation*}
	\|U' - V(X_1), Y\| \ge 
	2(s-1)t(2t+1) + 1 - (2(s-1)t + 2t) = 
	(2(s-1)t - 1)2t + 1.
      \end{equation*}
      This implies $\mdeg{X_2}{Y} \ge 2(s-1)t$.
      Lemma~\ref{lem:improvement_lemma} applied to $X_1, X_2$ and $Y$ 
      then gives a contradiction to the maximality of $\mathcal{M}$.
    \end{proof}

    The next lemma,  adapted from an
    argument in \cite{levitt2010avoid}, is probabilistic.
    It requires
    the union bound, the linearity of expectation, Markov's inequality
    and Chernoff's inequality \cite{chernoff1952measure}. 
    For a $d$-tuple $T := (v_1, \dotsc, v_d) \in V^d$,
    let 
    $\im(T) := \{v_1, \dotsc, v_d\}$ denote the image of $T$.


\begin{lem}
  Let $m,d \in \N$, $\varphi > 0$,
  $\beta \in (0, \frac{\varphi}{2d})$ and
  $\gamma \in 
  \left(0, 2 \beta \left(\frac{\varphi}{2d} - \beta\right) \right)$.
  There exists $n_0$ such that 
  when $V$ is a set of order $n \ge n_0$ the following holds.
  For every $S \in \binom{V}{m}$, let $f(S)$ be a subset of $V^d$.
  Call $T \in V^d$ an \emph{absorbing tuple} if $T \in f(S)$ 
  for some $S \in \binom{V}{m}$.
  If $|f(S)| \ge \varphi n^d$ for every $S \in \binom{V}{m}$, then
  there exists a set $\mathcal{F}$ of at most $\beta n/d$
  absorbing tuples such that
  $|f(S) \cap \mathcal{F}| \ge \gamma n$ for every $S \in \binom{V}{m}$
  and the images of distinct elements of $\mathcal{F}$ are disjoint.
  \label{lem:absorbing}
\end{lem}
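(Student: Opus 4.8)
The plan is a standard absorbing-method random selection, following the structure in \cite{levitt2010avoid}. First I would form a random family $\mathcal{R}$ by including each $d$-tuple $T\in V^d$ independently with probability $p:=\beta n/n^{d}$ (we only ever use tuples with distinct coordinates, which is all but a negligible fraction of $V^d$). Then $\E|\mathcal{R}|=\beta n$ and, for each fixed $S\in\binom{V}{m}$, since $|f(S)|\ge\varphi n^d$ we get $\E|f(S)\cap\mathcal{R}|\ge\varphi\beta n$. By Chernoff's inequality the event $|\mathcal{R}|>2\beta n$ has probability $o(1)$, and for each $S$ the event $|f(S)\cap\mathcal{R}|<\tfrac12\varphi\beta n$ has probability at most $e^{-\Omega(n)}$; since there are at most $2^{n}$ choices of $S$, a union bound is \emph{not} quite enough here, so instead I would union-bound over the $\binom{n}{m}\le n^{m}$ choices of $S$ (note $m$ is a constant), which is polynomial and easily absorbed by the exponentially small failure probability. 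Hence with positive probability $\mathcal{R}$ simultaneously satisfies $|\mathcal{R}|\le 2\beta n$ and $|f(S)\cap\mathcal{R}|\ge\tfrac12\varphi\beta n$ for every $S$. Fix such an $\mathcal{R}$.

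Next I would clean up $\mathcal{R}$ to make the images of its members pairwise disjoint. Count the number of ``bad'' ordered pairs $(T,T')\in\mathcal{R}^2$ with $T\ne T'$ and $\im(T)\cap\im(T')\ne\emptyset$. The number of pairs of $d$-tuples sharing at least one coordinate value is $O(n^{2d-1})$, so in expectation the random family contains $O(p^{2}n^{2d-1})=O(\beta^{2}n)$ such bad pairs; again by Markov (or by folding this into the same positive-probability event above, adjusting constants) we may assume $\mathcal{R}$ has at most, say, $Cn$ bad pairs for an absolute constant $C$ depending only on $d,\beta$. Now delete one tuple from each bad pair, and also delete every tuple that is \emph{not} absorbing. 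Deleting the bad-pair tuples removes at most $Cn$ tuples from each $f(S)\cap\mathcal{R}$; deleting non-absorbing tuples removes none, since $f(S)\subseteq\{\text{absorbing tuples}\}$ by definition. Call the resulting family $\mathcal{F}$. By construction the images of distinct members of $\mathcal{F}$ are disjoint, every member is absorbing, and $|\mathcal{F}|\le 2\beta n\le\beta n/d$ after we rescale—more carefully, we should run the argument with inclusion probability $p=\beta n/(d\,n^{d})$ from the start so the final bound is $|\mathcal{F}|\le\beta n/d$ as stated.

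The one genuine point requiring care, and the main obstacle, is the arithmetic that makes the final lower bound come out to exactly $\gamma n$ with $\gamma<2\beta(\tfrac{\varphi}{2d}-\beta)$. Running the argument with $p=\varphi' n/n^d$ where $\varphi'$ will be chosen just below $\beta/d$: then $\E|f(S)\cap\mathcal{R}|\ge\varphi\varphi' n$, the bad-pair deletions cost $O((\varphi')^2 n)$ per set $S$ (the implied constant being roughly $\binom{d}{1}^2 d$ from counting shared coordinates among $d$-tuples), and after deletion we retain at least $\bigl(\varphi\varphi' - c(\varphi')^2\bigr)n$ absorbing tuples in each $f(S)$. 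Choosing $\varphi'$ close enough to $\beta/d$ (and tracking that $c\le 2d$ so that $\varphi\varphi'-c(\varphi')^2\ge 2\varphi'(\tfrac{\varphi}{2d}-\varphi')\cdot\text{(const)}$, which under $\varphi'<\beta/d\le\varphi/(2d)$ exceeds $\gamma/?$) gives the claimed $\gamma n$; the stated hypotheses $\beta<\tfrac{\varphi}{2d}$ and $\gamma<2\beta(\tfrac{\varphi}{2d}-\beta)$ are precisely what is needed for this chain of inequalities to close with room to spare, once the failure probabilities are killed by taking $n_0$ large. Everything else—linearity of expectation, the union bound over $n^m$ sets, Markov on the bad-pair count, and Chernoff for the size and intersection concentration—is routine.
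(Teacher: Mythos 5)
Your proposal is correct and follows essentially the same route as the paper: select each $d$-tuple independently with probability just below $(\beta/d)\,n^{1-d}$, apply Chernoff's inequality together with a union bound over the polynomially many sets $S \in \binom{V}{m}$, bound the number of image-overlapping pairs (expected $O(n)$) via Markov, and then delete one tuple from each overlapping pair as well as all non-absorbing tuples. The only caveat is that the crude factor-$\tfrac{1}{2}$ Chernoff loss in your first paragraph would not suffice (since $\gamma$ may exceed $\varphi\beta/(2d)$); it is the refined analysis of your last paragraph, with selection probability $\varphi'$ taken close to $\beta/d$ and the overlap constant being about $d^2+1$ rather than $2d$ (which still works, as $(d^2+1)(\beta/d)^2 \le 2\beta^2$), that closes the chain of inequalities exactly as in the paper's choice $p=\beta/d-\varepsilon$.
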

    \begin{proof}
      Pick $\varepsilon > 0$ so that  
      \begin{equation*}
	(1 + \alpha)\varepsilon <  \frac{\alpha \beta}{d} - 2\beta^2 - \gamma.
      \end{equation*}
      Let $\beta' := \frac{\beta}{d}$,
      $p := \beta' - \varepsilon$ and 
      $\gamma' := \gamma + (d^2 + 1)p^2$.
      Let $\mathcal{F}'$ be a random subset of $V^d$ where each $T \in V^d$
      is selected independently with probability $p n^{1 - d}$.
      Let 
      \begin{equation*}
	\mathcal{O} := 
	\left\{ 
	  \{T, T'\} \in \binom{V^d}{2} : \im(T) \cap \im(T') \neq  \emptyset 
	\right\}
      \end{equation*}
      and $\mathcal{O}_{\mathcal{F}'} := \mathcal{O} \cap \binom{\mathcal{F}'}{2}$.

      We only need to show that, for sufficiently large $n_0$,
      with positive probability 
      $|\mathcal{O}_{\mathcal{F}'}| < (d^2 + 1)p^2 n$,
      $|\mathcal{F}'| < \beta' n$ and
      $|f(S) \cap \mathcal{F}'| > \gamma' n$ for every $S \in \binom{V}{m}$.
      Indeed, we can then remove at most $(d^2 + 1)p^2 n$ tuples
      from such a set $\mathcal{F}'$ so that the images of the remaining
      tuples are disjoint.
      The resulting set will satisfy (a), (c) and (d).
      To also satisify (b), 
      also remove every 
      $T \in \mathcal{F}'$ for which 
      there does not exist $S \in \binom{V}{m}$
      such that $T \in f(S)$.

      Clearly,  
      \begin{equation*}
	|\mathcal{O}| \le n \cdot d^2 \cdot n^{2d-2}=
	d^2 n^{2d - 1},
      \end{equation*}
      so for any $\{T, T'\} \in \binom{V^d}{2}$,
      $\Pr(\{T, T'\} \subset \mathcal{F}') = p^2 n^{2 - 2d}$.
      Therefore, by the linearity of expectation,
      $\E[|\mathcal{O}_{\mathcal{F}'}|] < d^2 p^2 n$.
      So, by Markov's inequality, 
      \begin{equation*}
	\Pr\left( 
	\left|\mathcal{O}_{\mathcal{F}'} \right| \ge (d^2 + 1) p^2 n 
	\right) \le 
	\frac{d^2}{d^2 + 1}.
      \end{equation*}
      Note that $\E[|\mathcal{F}'|] = p n$ and 
      $p n \ge \E[|f(S) \cap \mathcal{F}'|] \ge \alpha p n$ for every $S \in \binom{V}{m}$.
      Therefore, by Chernoff's inequality,
      $
      \Pr(|\mathcal{F}'| \ge \beta' n) \le 
      \exp(-\varepsilon^2 n/3),
      $
      and, since 
      \begin{equation*}
	\alpha p - \gamma' =
	\frac{\alpha \beta}{d} - \alpha \varepsilon - 
	(d^2+1)\left(\frac{\beta}{d} - \varepsilon\right)^2 - \gamma >
	\frac{\alpha \beta}{d} - 2 \beta^2 - \gamma - \alpha \varepsilon
	> \varepsilon,
      \end{equation*}
      $\Pr(|\mathcal{F}' \cap f(S)| \le \gamma' n) \le 
      \exp(-\varepsilon^2 n/3)$
      for every $S \in \binom{V}{m}$.
      Therefore, for sufficiently large $n_0$,
      \begin{equation*}
	\Pr\left( 
	\left|\mathcal{O}_{\mathcal{F}'} \right| \ge (d^2 + 1) p^2
	\right) 
	+
	\Pr(|\mathcal{F}'| \ge \beta' n) 
	+
	\sum_{S \in \binom{V}{m}} \Pr(|\mathcal{F}' \cap f(S)| \le \gamma' n)
	< 1.
	\qedhere
      \end{equation*} 
    \end{proof}

    \begin{lem}
      Let $s \ge 2$, $\varepsilon > 0$, and $M = (V, E)$ be a standard multigraph on $n$ vertices.  
      If $\delta(M) \ge 2\frac{s-1}{s}n + \varepsilon n$, then for all distinct $x_1, x_2 \in V$,
      there exists $A \subseteq V^{s-1}$ such
      that $|A| \ge \left( \varepsilon n \right)^{s-1}$
      and for every $T \in A$ 
      both $\im(T) + x_1$ and $\im(T) + x_2$ 
      are near matching $s$-cliques.
      \label{lem:sponges}
    \end{lem}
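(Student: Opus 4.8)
The plan is to build the set $A$ greedily, one coordinate at a time, using the degree condition to guarantee that at each step there are linearly many valid choices. Fix distinct $x_1, x_2 \in V$. The goal is to pick an ordered $(s-1)$-tuple $T = (v_1, \dotsc, v_{s-1})$ of distinct vertices, each distinct from $x_1$ and $x_2$, such that both $\{v_1, \dotsc, v_{s-1}, x_1\}$ and $\{v_1, \dotsc, v_{s-1}, x_2\}$ are near matching $s$-cliques. Recall that a full $s$-clique $K$ is a near matching if either $\ldeg{v}{K} \le 1$ for all $v \in K$, or all light degrees are at most $2$ with at most one vertex of light degree exactly $2$. The simplest sufficient condition to aim for is that the set $S := \{v_1, \dotsc, v_{s-1}\}$ induces only heavy edges in $M$, and moreover each $v_j$ sends a heavy edge to both $x_1$ and $x_2$; then $S + x_i$ is a full $s$-clique whose only possible light edges are incident to $x_i$, of which there are none, so it is trivially a near matching. (One can afford to be even more generous, but heavy-cliquishness is clean and suffices.)

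First I would set up the greedy selection. For a vertex $v$, the condition $\delta(M) \ge 2\frac{s-1}{s}n + \varepsilon n$ means $\|v, V\| \ge 2\frac{s-1}{s}n + \varepsilon n$, so the number of vertices $w$ with $\|v,w\| = 2$ (heavy neighbors) is at least $2\frac{s-1}{s}n + \varepsilon n - n = \frac{s-2}{s}n + \varepsilon n$; call this set $N^h(v)$. Thus $|N^h(v)| \ge \frac{s-2}{s}n + \varepsilon n$ for every $v$. Now choose $v_1 \in N^h(x_1) \cap N^h(x_2) \smallsetminus \{x_1, x_2\}$: the intersection has size at least $2\left(\frac{s-2}{s}n + \varepsilon n\right) - n - 2 = \frac{s-4}{s}n + 2\varepsilon n - 2$. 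Having chosen $v_1, \dotsc, v_{j-1}$ (with $j - 1 \le s - 2$), choose $v_j$ in $N^h(x_1) \cap N^h(x_2) \cap \bigcap_{i < j} N^h(v_i)$, avoiding the already-used vertices. By the union bound over the $j + 1 \le s$ heavy-neighborhood constraints,
\[
\Bigl| N^h(x_1) \cap N^h(x_2) \cap \bigcap_{i<j} N^h(v_i) \Bigr| \ge n - (j+1)\Bigl(\tfrac{2}{s}n - \varepsilon n\Bigr) \ge n - s\Bigl(\tfrac{2}{s}n - \varepsilon n\Bigr) = s\varepsilon n,
\]
so after removing the at most $s$ forbidden vertices there remain at least $s\varepsilon n - s \ge \varepsilon n$ choices for $v_j$ once $n$ is large enough. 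Hence the number of valid tuples is at least $(\varepsilon n)^{s-1}$, and each such tuple $T$ has the property that $\im(T)$ induces a heavy clique with every vertex heavy-adjacent to both $x_1$ and $x_2$, so $\im(T) + x_1$ and $\im(T) + x_2$ are full $s$-cliques with no light edges at all — in particular near matchings. Taking $A$ to be this set of tuples finishes the proof.

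The main obstacle is bookkeeping the constants cleanly: the union-bound estimate $n - (j+1)(\frac{2}{s} - \varepsilon)n$ is only useful when $j + 1 \le s$, which is exactly the range we need ($j \le s - 1$), and one must check that the slack $s\varepsilon n$ dominates the $O(s)$ vertices excluded for distinctness, which holds for $n \ge n_0 := s/\varepsilon$ or so; these are routine. A secondary point worth a sentence is that we are deliberately proving something slightly stronger than needed (all-heavy rather than genuinely near-matching with light edges allowed), which is fine since a stronger conclusion still yields the stated one — but it is worth confirming that the degree hypothesis $\delta(M) \ge 2\frac{s-1}{s}n + \varepsilon n$, rather than the weaker $2(1-1/s)n - 1$ appearing elsewhere, is what makes the $+\varepsilon n$ slack propagate through all $s$ union-bound terms; without the extra $\varepsilon n$ per vertex the intersection could be empty.
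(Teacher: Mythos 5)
There is a genuine gap, and it is not just a bookkeeping issue. Your key union-bound display is arithmetically wrong: each heavy neighborhood misses at most $\frac{2}{s}n - \varepsilon n$ vertices, so intersecting $j+1 \le s$ of them gives at least
\[
n - s\left(\tfrac{2}{s}n - \varepsilon n\right) = (s\varepsilon - 1)\,n,
\]
not $s\varepsilon n$; you dropped the term $-n$. For the relevant regime of small $\varepsilon$ (the lemma must hold for every $\varepsilon>0$) this quantity is negative, and already at the second or third greedy step the guaranteed intersection can be empty. Moreover, the error cannot be patched, because the strengthened statement you aim for (an all-heavy $(s-1)$-set heavily joined to $x_1,x_2$) is simply false under the hypothesis: take $G(M)$ complete and let the light pairs form a disjoint union of cliques each of size $\lfloor \frac{2}{s}n - \varepsilon n\rfloor - 1$. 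Then $\delta(M) \ge 2\frac{s-1}{s}n + \varepsilon n$, but a heavy clique is an independent set in the light graph, hence has size only about $\frac{s}{2-s\varepsilon}\approx s/2 < s-1$ for $s\ge 4$ and small $\varepsilon$. So no heavy $(s-1)$-clique exists at all, let alone $(\varepsilon n)^{s-1}$ of them. This is exactly why the lemma is phrased in terms of near matchings: the tiles must be allowed to contain a matching of light edges.

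The paper's proof works with this weaker requirement from the start. It extends ``useful'' $t$-tuples $T$ for which $\im(T)+x_i$ is a near matching and, crucially, $\ldeg{x_i}{\im(T)} \le \max\{0, t-s+3\}$, i.e.\ light edges at $x_1,x_2$ are only permitted in the last couple of steps. When a largest family of useful $t$-tuples cannot be extended in $\varepsilon n$ ways, the degree condition produces many vertices $z$ with $\mdeg{z}{Y}\ge 2t+3$ for $Y=\{x_1,x_2,v_1,\dots,v_t\}$ (intersected with the heavy neighborhoods of $x_1,x_2$ only while $t\le s-4$, which keeps the count positive), and a swap of $z$ for its unique light neighbor $y\in Y$ increases $\sum_{T\in A}\|M[\im(T)]\|$, contradicting an extremal choice of $A$. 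If you want to salvage a greedy argument, you must track light degrees inside the growing set (allowing each vertex one light edge) rather than forbid light edges outright, and you will need some device like the paper's edge-maximization to handle the steps where the naive intersection bound fails.
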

    \begin{proof}
      For $0 \le t \le s-1$, the $t$-tuple 
      $T \in V^t$ is called  \textit{useful} if, for both $i \in \{1, 2\}$,
      $\im(T) + x_i$ is a near matching
      and $\ldeg{x_i}{\im(T)} \le \max\{0, t - s + 3\}$.
      To complete the proof we will show that there
      exists a set $A \subseteq V^{s-1}$ such that
      $|A| \ge (\varepsilon n)^{s-1}$ 
      and every $T \in A$ is  useful. 
      Suppose there is no such set.

      Let $0 \le t < s-1$ be the maximum integer for which 
      there exists $A \subseteq V^t$
      such that $|A| \ge (\varepsilon n)^{t}$
      and every $T \in A$ is useful (note that $0$ is a candidate since the empty function $f :\emptyset \to V$ is useful and $V^0 = \{ f \}$) ; 
      select $A$ so that in addition 
      $\sum_{T \in A} \medges{M[\im(T)]}$ is maximized.
      Since $t$ is maximized, there exists 
      $(v_1, \dotsc, v_t) \in A$ 
      with  less than $\varepsilon n$ extensions
      $(v_1, \dotsc, v_t, v)$ to a useful $(t+1)$-tuple. 

      Let $m := n/s$, 
      $Y := \{x_1, x_2,v_1,\dots,v_t\}$, and 
      $V_c := \{ v \in V: \mdeg{v}{Y} \ge 2t + 4 - c\}$.
      Then $|V_0| \le \varepsilon n$, since each $v\in V_0$ extends $(v_1 ,\dots, v_t)$.
      Define 
      \begin{equation*}
	Z := 
	\begin{cases} 
	  V_1 \cap N_H(x_1) \cap N_H(x_2) &\text{if $t \le s-4$} \\
	  V_1 & \text{\text{if $s-3\le t\le s-2$}}
	\end{cases}.
	  \end{equation*}

We claim that $|Z|\geq (t+1)\varepsilon n$.	Since $(t+2) (2s-2)m +(t+2) \varepsilon n \le 
      \mdeg{Y}{V} \le |V_0| + |V_1| + (2t + 4-2)sm$, we have
      \begin{equation} \label{eq:V1}
	|V_1| \ge 2(s - 2 - t)m + (t + 2)\varepsilon n - |V_0|\ge
	(t+1)\varepsilon n.
      \end{equation} 
 So we are done unless $t\leq s-4$.  In this case, note that $|N_H(x_i)| \ge (s - 2)m + \varepsilon n$ for $i \in \{1, 2\}$, which combined with \eqref{eq:V1} gives 	$$|Z|\ge 
	  2(s-2-t)m+(t+1)\varepsilon n -4m\ge (t+1)\varepsilon n.$$
      
     
      
      So there  exists $z \in Z\subseteq V_1$ such that
      $(v_1, \dotsc, v_t, z)$ is not useful.  
      Let $\{y\} = N_L(z) \cap Y$.
      The definitions of useful and $Z$  
      imply $y \notin \{x_1, x_2\}$ and $\mdeg{y}{Y} = 1$.
      But then $\medges{Y - y + z} > \medges{Y}$,
      contradicting the maximality of 
      $\sum_{T \in A} \medges{M[\im(T)]}$.
    \end{proof}

    \begin{proof}[Proof of Theorem~\ref{thmS4}]
      Assume $s \ge 2$ as otherwise the theorem is trivial.
      Let $d := s^2$ and $\alpha := \frac{\varepsilon^d}{2}$.
      For any $S \in \binom{V}{s}$ call 
      $Z \in \binom{V - S}{d}$ an 
      \textit{$S$-sponge}
      if both $M[Z]$ and  $M[Z \cup S]$ have a perfect acceptable 
      $s$-clique tiling.
      Define $f : \binom{V}{s} \to 2^{V^d}$  by
      \begin{equation*}
	f(S) := \{ T \in V^d : \im(T) \text{ is an $S$-sponge} \}.
      \end{equation*}

\begin{figure}[ht]
\begin{center}
\input{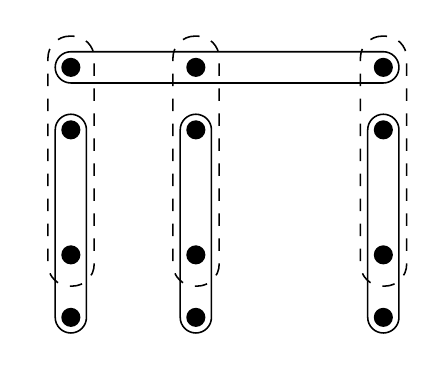_t}
\caption{An $S$-sponge. Note that the tuples indicated by the dashed lines form a tiling and the tuples indicated by the solid lines form a larger tiling.}
\end{center}
\end{figure}

      \begin{unnumbered_claim}
	$|f(S)| \ge \alpha n^d$ for every $S \in \binom{V}{s}$.
      \end{unnumbered_claim}
      \begin{proof}
	Let $S := \{x_1^1, \dotsc, x_1^{s}\} \in \binom{V}{s}$.
	By Lemma~\ref{lem:sponges} there are 
	(many) more than $( \varepsilon n )^{s}$ 
	tuples $T_0 \in V^{s}$
	such that $\im(T_0)$ is an acceptable $s$-clique 
	and $\im(T_0) \cap S = \emptyset$.
	Let $(z_1^1, \dotsc z_1^{s})$ be one such tuple.
	Again by Lemma~\ref{lem:sponges}, 
	for every $i \in [s]$ there are at least $( \varepsilon n )^{s-1}$
	tuples 
	$T_i = (z_2^i,\dots, z_s^i) \in V^{s-1}$ such that
	$x_1^i + \im(T_i)$ and $z_1^i + \im(T_i)$ are both acceptable $s$-cliques.
	Therefore, when $n$ is sufficiently large,
	there are at least $(\varepsilon n)^s((\varepsilon n)^s)^{s-1}\geq \alpha n^d$ tuples 
	$T := (z_1^1,\dots, z_s^1, \dots, z_1^s,\dots, z_s^s)$ such that 
	if we define $Z := \im(T)$, 
	$Z_0 := \{z_1^1, \dotsc, z_{1}^s\}$ and
	$Z_i := \{z_2^i,\dots, z_s^i\}$ 
	for every $i \in [s]$, then
	\begin{itemize}
	  \item
	    $Z \in \binom{V - S}{d}$;
	  \item
	    $\{ z_1^i + Z_i : i \in [s]\}$ is a perfect 
	    acceptable $s$-clique tiling of $M[Z]$; 
	    and
	  \item
	    $Z_0 + \{ x_1^i + Z_i : i \in [s]\}$ 
	    is a perfect acceptable $s$-clique tiling of $M[Z \cup S]$.
	    \qedhere
	\end{itemize}
	
      \end{proof}

      Let $\gamma, \beta < \min\{\alpha, \frac{\varepsilon}{2}\}$ 
      be constants that
      satisfy the hypothesis of Lemma~\ref{lem:absorbing}, 
      and let $\mathcal{F} \subset V^d$ be a set guaranteed by the lemma.
      Let $Q := \bigcup_{T \in \mathcal{F}} \im(T)$ and note that
      $|Q| = d |\mathcal{F}| < \frac{\varepsilon}{2} n$.
      Let $M' := M - Q$. 

      We now can apply Theorem~\ref{thm:almost_tiling}
      to $M'$ to tile all of the vertices of $M'$ with
      acceptable $s$-cliques except a set
      $X$ of order at most $s(s-1)(2s-1)/3$.
      Partition $X$ into sets of size $s$.
      If $n$ is sufficiently large, $|X| \le s \gamma n$.
      Therefore, 
      for every set $S$ in the partition of $X$, 
      we can choose a unique $T \in f(S) \cap \mathcal{F}$.
      This implies that there is a perfect acceptable $s$-clique tiling
      of $M[X \cup Q]$ which completes the proof.
    \end{proof}

    \hbadness 10000\relax


\begin{thebibliography}{1}
	\bibitem{chernoff1952measure} H.~Chernoff, A measure of asymptotic
	efficiency for tests of a hypothesis based on the sum of observations,
	\emph{The Annals of Mathematical Statistics} \textbf{23} (1952), no.~4,
	493--507.

	\bibitem{corradi1963maximal} K.~Corr{á}di and A.~Hajnal, On
	the maximal number of independent circuits in a graph, 
	\emph{Acta Mathematica Hungarica} \textbf{14} (1963), no.~3, 423--439.

	\bibitem{CKM12} A. Czygrinow, H.A. Kierstead and T. Molla,
	On directed versions of the Corr\'{a}di-Hajnal Corollary,
	submitted.

	\bibitem{dirac1952some} G.A. Dirac, Some theorems on abstract
	graphs, \emph{Proceedings of the London Mathematical Society} \textbf{3}
	(1952), no.~1, 69--81.

	\bibitem{E} Enomoto,~H., 
	On the existence of disjoint cycles in a graph, \emph{Combinatorica},
	\textbf{18}, (1998), no.~4, 487--492.

	\bibitem{enomoto87} Enomoto,~H., Kaneko,~A. and Tuza,~ Z.,
	$P_3$-factors and covering cycles in graphs of minimum degree $n/3$, 
	\emph{Combinatorics}, \textbf{52} (1987), 213--220.

	\bibitem{erdos}
	Erd\H{o}s,~P. Problem 9. In \emph{Theory of Graphs and its Applications}
	Czech.  Academy of Sciences, Prague, (1963), 159

	\bibitem{gh}A. Ghouila-Houri, 
	Une condition suffisante d'existence d'un circuit hamiltonien,
	\emph{C. R. Math. Acad. Sci. Paris}, \textbf{25}, (1960), 495--497.

	\bibitem{hajnal1970pcp} A.~Hajnal and E.~Szemer{é}di, Proof
	of a conjecture of {P}. {E}rd{\H{o}}s, \emph{Combinatorial Theory
	and Its Application} \textbf{2} (1970), 601--623.
      \bibitem{ht00}F. Havet, S. Thomass\'{e}, Oriented hamiltonian paths in
	tournaments: A proof of Rosenfeld's conjecture, \emph{Journal
	Combinatorial Theory B} \textbf{78}, (2000), 243--273.
	\bibitem{ks}P. Keevash, B. Sudakov, Triangle packings and 1-factors in oriented graphs, \emph{Journal
	Combinatorial Theory B} \textbf{99}, (2009), 709--727.
	\bibitem{kko}P. Keevash, D. Kuhn and D. Osthus, An exact minimum degree condition for Hamilton
	cycles in oriented graphs, \emph{J. London Math. Soc.} \textbf{79}, (2009), 144--166.

	\bibitem{KK}H. A. Kierstead and A. V. Kostochka, A Short Proof of
	the Hajnal-Szemer\' edi Theorem on equitable coloring, \emph{Combinatorics,
	Probability and Computing}, 17 (2008) 265--270.

	\bibitem{KKore}H. A. Kierstead and A. V. Kostochka, An Ore-type theorem
	on equitable coloring, \emph{J. Combinatorial Theory Series B}, 98
	(2008) 226--234.

	\bibitem{KKMS}H. A. Kierstead, A. V. Kostochka, M. Mydlarz and E.
	Szemer\' edi, A fast algorithm for equitable coloring, \emph{Combinatorica},
	{30} (2010) 217--224. 

	\bibitem{KKY}H. A. Kierstead, A. V. Kostochka and Gexin Yu, Extremal
	graph packing problems: Ore-type versus Dirac-type, in \emph{Surveys
	in Combinatorics 2009} (eds. S. Huczynska, J. Mitchell and C. Roney-Dougal),
	London Mathematical Society Lecture Note Series \textbf{365}, Cambridge
	University Press, Cambridge (2009) 113--136.

	\bibitem{komlos1996square} J.~Koml{ó}s, G.N. S{á}rk{ö}zy,
	and E.~Szemer{é}di, On the square of a hamiltonian cycle
	in dense graphs, \emph{Random Structures \& Algorithms} \textbf{9} (1996),
	no.~1-2, 193--211.

	\bibitem{KMOsumner11} D.~K{\"u}hn, R.~Mycroft, D.~Osthus, A
	proof of Sumner's universal tournament conjecture for large
	tournaments, \emph{Proceedings of the London Mathematical Society} \textbf{4} (2011), 731--766.


	\bibitem{levitt2010avoid} I.~Levitt, G.N. S{á}rk{ö}zy, and E.~Szemer{é}di,
	How to avoid using the regularity lemma: P{ó}sa's conjecture
	revisited, \emph{Discrete Mathematics} \textbf{310} (2010), no.~3, 630--641.

	\bibitem{nash1971edge} C.St.J.A. Nash-Williams, Edge-disjoint
	hamiltonian circuits in graphs with vertices of large valency, \emph{Studies
	in Pure Mathematics} (Presented to Richard Rado), 1971, pp.~157--183.

	\bibitem{wangdir}H. Wang, Independent Directed Triangles in Directed Graphs,
	\emph{Graphs and Combinatorics}, \textbf{16}, (2000), 453--462.

	\bibitem{wood}D. Woodall, Sufficient conditions for cycles in digraphs, \emph{Proc. London Math. Soc.}, \textbf{24}, (1972), 739--755.
    \end{thebibliography}
    \end{document}